\providecommand{\U}[1]{\protect\rule{.1in}{.1in}}
\numberwithin{equation}{section}
\newcommand{\jint}{j = 1, \ldots, N-1}
\newcommand{\jsub}{j = 1, \ldots, N}
\newcommand{\igg}{\color{black}}
\newcommand{\imag}{\color{black}}
\newcommand{\imagg}{\color{black}} 
\newcommand{\Cstab}{C_{\mathrm{stab}}}
\newcommand{\Ctrace}{C_{\mathrm{trace}}}
\newcommand{\Var}{\mathrm{Var}}
\newcommand{\cH}{\mathcal{H}}
\newcommand{\amax}{a_{\max}}
\newcommand{\amin}{a_{\min}}
\newcommand{\betamax}{\beta_{\max}}
\newcommand{\cmin}{c_{\min}}
\newcommand{\cmax}{c_{\max}}
\newtheorem{theorem}{Theorem}[section]
\newtheorem{assumption}[theorem]{Assumption}
\newtheorem{discussion}[theorem]{Discussion}
\newtheorem{corollary}[theorem]{Corollary}
\newtheorem{definition}[theorem]{Definition}
\newtheorem{lemma}[theorem]{Lemma}
\newtheorem{notation}[theorem]{Notation}
\newtheorem{proposition}[theorem]{Proposition}
\newtheorem{remark}[theorem]{Remark}
\begin{document}

\title[Stability and  error analysis for the Helmholtz equation with
variable coefficients]{Stability and finite element error analysis for the Helmholtz equation with
variable coefficients}
\author{I.G. Graham}
\address{Department of Mathematical Sciences, University of Bath,
Bath BA2 7AY, United Kingdom}
\email{i.g.graham@bath.ac.uk}
\author{S.A. Sauter}
\address{Institut f\"{u}r Mathematik, Universit\"{a}t Z\"{u}rich,
Winterthurerstrasse 190, CH-8057 Z\"{u}rich, Switzerland} 
\email{stas@math.uzh.ch}
\maketitle

\begin{abstract}
{ We discuss the stability theory and numerical analysis of the  Helmholtz equation  with
variable and possibly non-smooth or oscillatory coefficients.
Using the unique continuation principle and the Fredholm alternative,  we
first give an existence-uniqueness result for this problem, which holds under
rather general conditions on the coefficients and on the domain. 
Under additional assumptions, we derive estimates for the stability constant
(i.e., the norm of the solution operator) in terms of the data (i.e. PDE
coefficients and frequency), and we apply these estimates to obtain a new  finite
element error analysis for the Helmholtz equation which is valid at high
frequency and with variable wave speed. The central role played by the
stability constant in this theory leads us to investigate its behaviour with
respect to coefficient  variation in detail. We give, via a 1D analysis,
an {\em a priori}  bound with    stability constant  growing  exponentially in the variance of the coefficients (wave
speed and/or  diffusion coefficient).  Then, by means  a family of analytic examples (supplemented by
numerical experiments),  we  show that this estimate is sharp. }


\end{abstract}



\textbf{Keywords:} Helmholtz equation, high frequency, variable wave speed,
variable density, well-posedness, a priori estimates, {finite element error
analysis}

\textbf{Mathematics Subject Classification (2000):} {35J05, 65N12, 65N15,
65N30}

\section{Introduction}


In this paper we consider the Helmholtz equation
\begin{equation}
-\operatorname*{div}\left(  a\operatorname*{grad}u\right)  -\left(
\frac{\omega}{c}\right)  ^{2}u=f \label{eq:Helm}%
\end{equation}
on a bounded connected Lipschitz domain $\Omega\subset\mathbb{R}^{d}$, \ $d=1,2,3$, with
angular frequency $\omega\geq\omega_{0}>0$, given data $f\in L^{2}(\Omega)$,
and { real-valued} scalar coefficients $a,c$ which are allowed to vary
spatially,
but which will be assumed to be
bounded above and below by strictly positive numbers. The problem
(\ref{eq:Helm}) is supplemented with mixed boundary conditions on
$\Gamma:=\partial\Omega$ of the form
\begin{equation}
a\frac{\partial u}{\partial{n}}-\mathrm{i}\omega\beta u=g\quad\text{on }%
\quad\Gamma_{{N}},\quad u=0\quad\text{on}\quad\Gamma
_{\operatorname*{D}}\ , \label{eq:ImpBC}%
\end{equation}
for given $g\in H^{-1/2}(\Gamma_{{N}})$ and{ real-valued}
$\beta\in L^{\infty}(\Gamma_{{N}})$.
Here $\Gamma_{{N}},\ \Gamma_{{D}}$ are relatively
open pairwise disjoint subsets of $\Gamma$, with $\Gamma=\overline
{\Gamma_{{N}}}\cup\overline{\Gamma_{{D}}}$ and
$\partial/\partial{n}$ denotes the outward normal derivative.  {\igg In \eqref{eq:ImpBC}, the set  $\mathrm{supp} (\beta) \subseteq \overline{\Gamma_N} $ is required to have positive $d-1$ dimensional measure. }

For the strong formulation (\ref{eq:Helm}), {\igg \eqref{eq:ImpBC}},  a standard, additional requirement
is that $a$ is Lipschitz continuous, so that its gradient is defined almost
everywhere. We shall consider rougher coefficients $a,c$, via
the weak form: seek $u\in\mathcal{H}$ to satisfy
\begin{equation}
B_{a,c}(u,v)\ :=\ \int_{\Omega}\left(  a\nabla u \cdot \nabla\overline{v}-\left(
\frac{\omega}{c}\right)  ^{2}u\overline{v}\right)  \ -\ \mathrm{i}\omega
\int_{\Gamma}\beta u\overline{v}\ =\ \int_{\Omega}f\overline{v}+\int_{\Gamma
}g\overline{v}\ , \label{eq:weak}%
\end{equation}
for all $v\in\mathcal{H}$, where $\mathcal{H}$ denotes the functions in
$H^{1}(\Omega)$ with vanishing trace on $\Gamma_{D}$.
Problem \eqref{eq:weak} arises as a frequency domain scalar approximation of
certain elastic or electromagnetic propagation and scattering model problems
(see \S \ref{subsec:phys}).
In this context, the behaviour of $u$ as frequency $\omega$ grows is of both
physical and numerical importance and occupies considerable contemporary interest.

When $a$ and $c$ are constant and (for simplicity) $\beta=1$, it is well-known
that the \emph{a priori} energy bound
\begin{equation}
\left(  \int_{\Omega}a|\nabla u|^{2}\ +\ \left(  \frac{\omega}{c}\right)
^{2}\left\vert u\right\vert ^{2}\right)  ^{1/2}\ \leq\ C_{\mathrm{stab}%
}\left(  \Vert f\Vert_{L^{2}(\Omega)}^{2}+\Vert g\Vert_{L^{2}(\Gamma)}%
^{2}\right)  ^{1/2}\ \label{eq:apriori}%
\end{equation}
holds for some $C_{\mathrm{stab}}>0$ independent of $f,g$ and $\omega$,
provided $\Omega$ is Lipschitz and star-shaped with respect to a ball -- see
\cite[Prop 8.1.4]{MelenkDiss} (for $d=2$), \cite[Theorem 1]{cummings-feng06}
($d=3$) and e.g. \cite{GaGrSp:15} for a recent survey.

{ A standard approach to showing well-posedness of the problem
\eqref{eq:weak} is to first establish the \emph{a priori} bound
\eqref{eq:apriori} -- which provides uniqueness -- and then to infer existence
via the Fredholm alternative. This approach restricts the range of problems
which can be treated to those for which an \emph{a priori} bound is available.
A more general well-posedness theory can be obtained via the unique
continuation principle (UCP) - and used in \cite[Chap. 4.3]{Leis86} (also
\cite[Rem. 8.1.1]{MelenkDiss}). Our first contribution is to present
well-posedness results for \eqref{eq:Helm}, \eqref{eq:ImpBC} which do not
require the a priori bound \eqref{eq:apriori}. This is done by appealing to
the literature on the UCP and applying it to the special case of the Helmholtz
equation.}

{ Nevertheless, estimates like \eqref{eq:apriori}, when they
are available, play a crucial role in the numerical analysis of
\eqref{eq:weak}.
For the conforming {\imag fixed-order}  Galerkin finite element approximation of
\eqref{eq:weak} with constant coefficients, {\imag a mesh diameter condition  of the form
{ $h\omega^{2}C_{\mathrm{stab}}$} sufficiently small} 
appears as a 
condition for quasi-optimality \cite{mm_stas_helm2}.
{\igg Our second contribution is
to extend this  theory to a class  of variable
coefficients, highlighting the role played by the variable-coefficient
analogue of \eqref{eq:apriori}. In order to do this we extend
\eqref{eq:apriori} to the case of variable wavespeed (requiring that the
wavespeed should be essentially non-oscillatory), and then give an analysis of
quasi-optimality of conforming finite element methods for \eqref{eq:weak} for this class of coefficients. }

{\igg The requirement  that   $h  \omega^{2}$  should be sufficiently small   is a symptom of  the so-called pollution effect (cf.
{\cite{BaSa:00}), which says that a bounded number of grid points per wavelength
  is, in general, \textit{not} sufficient
for a low order Galerkin finite element method to be  quasi-optimal.
There are many publications on the pollution
effect and on how to cure it, either using a  refined error analysis and/or more
sophisticated discretizations (e.g. \cite{MelenkSauterMathComp, mm_stas_helm2, MPS13,Wua,Wub,Wuc}). All these
approaches rely on refined \textquotedblleft split\textquotedblright%
\ regularity estimates for the adjoint Helmholtz problem and we are not aware
of such results for the class of coefficients which we consider in this paper.
It is thus  an open question whether,  for
the class of coefficients considered here,  the property (at least) of discrete stability can be proved under a more 
relaxed condition on $h$. In  \cite{Wub} it was recently shown that discrete stability (and a corresponding error estimate) holds under the weaker condition   $h^{2}\omega^{3}\lesssim O\left(  1\right)  $, but only for 
 coefficients
which are a very small perturbation of a  constant function.   }}

In the final part of the paper we investigate the outcome when the
``non-oscillatory'' assumption on the coefficients is violated. Here our
contribution is a detailed 1D analysis which shows that the estimate
\eqref{eq:apriori} continues to hold, but the stability constant
$C_{\mathrm{stab}}$ has an estimate which {\imagg can blow up exponentially in the
variance of $a$ or $c$ (or both) for certain parameter configurations}. Investigating this further, we present a
class of examples in which this exponential blow-up is realised.

{This paper is organized as follows. In
\S \ref{SecAbsExUnique}, we present the existence-uniqueness of (\ref{eq:Helm}%
)
allowing low regularity of the coefficients $a$ and $c$ via the UCP.
The main result in \S \ref{SecAbsExUnique} is Theorem \ref{thm:wp}, which
requires
that $a,c$ are uniformly positive and
$a,c^{-2}\in L^{\infty}\left(  \Omega\right)  $ (for $d\leq2$). For $d\geq3$,
the slightly more restrictive assumption that $a$ has a $C^{0,1}$ extension
from $\Omega$ to a certain extended domain appears. Moreover the latter
condition on $a$ is sharp in 3D, as the \textquotedblleft
counter-examples\textquotedblright\ in Remark \ref{Counterex} imply. }

Turning to the \emph{a priori } bound \eqref{eq:apriori}, the standard
approach is
to use the ``Rellich'' test function $v=\mathbf{x} \cdot \nabla u$ (or linear
combinations of $v$ and $u$ - commonly known as the ``Morawetz multiplier'' )
in the weak form \eqref{eq:weak} to obtain stability estimates - see e.g.
\cite{Sp:13} and a number of authors have recently applied this technique in the
variable coefficient case (see discussion of literature below).
In Section \ref{SecDleq2} we illustrate the outcome applying this analysis in
the case of variable $c$ (but restricting to $a=1$ for illustration)
and we explain how this approach yields the bound \eqref{eq:apriori}, but only
under the quite restrictive condition

{
\begin{equation}
\frac{\mathbf{x}\cdot \nabla c}{c}\leq1-\theta\ , \label{eq:restrictive}%
\end{equation}
for some $\theta>0$. Thus, although $c$ is allowed to decrease arbitrarily
quickly along the radial directions, there is a severe restriction on any
increase in $c$ and hence oscillatory behaviour for $c$ is essentially ruled
out.
}

Section {\ref{SecGalDisc} is devoted to the
convergence and stability analysis of a conforming Galerkin discretization. As
it is common for elliptic problems with compact perturbations, the
discretization has to satisfy a \textquotedblleft minimal resolution
condition\textquotedblright\ for the \textit{adjoint approximability constant}
(see (\ref{defhetapproxconst})) and then quasi-optimal error estimates
follow. In Theorem \ref{TheoStabDisc} we prove that the quasi-optimality
constant for the energy error is independent of the wave number and give the
explicit dependence on the coefficients }$a$ and $c$. The adjoint
approximability is estimated in Theorem {\ref{thm:hetconv}; the estimate is
explicit in the wave number }$\omega$, {reciprocal density }$a$, the wave
speed $c$, the mesh size $h$ of the finite element mesh, and the stability
constant $C_{\mathrm{stab}}$. This shows the importance of
coefficient-explicit estimates of $C_{\mathrm{stab}}$ in order to obtain a
coefficient-explicit minimal resolution condition and error estimate. In the
setting of \S \ref{SecDleq2} such estimates for $C_{\mathrm{stab}}$ are available.

{ In \S \ref{Sec1DCase}, we consider one-dimensional problems
with wave speed $c$ and diffusion coefficient $a$ which may both be spatially
oscillatory and suffer jumps. We study how this affects $C_{\mathrm{stab}}$.
We prove the estimate \eqref{eq:apriori}  with  $C_{\mathrm{stab}}$ growing    exponentially in the
\textit{variation} of the coefficients $a,c$. We conclude \S \ref{Sec1DCase} with
some analytic and numerical calculations on a particular example where the
exponential growth of $C_{\mathrm{stab}}$ is realised, demonstrating the
sharpness of the theoretical estimates. In \cite{Torres17}, \cite{Torres18}
a refined analysis is
presented for \textit{perfectly oscillating} coefficients $c$. Here, although
the variation increases with the oscillation, it is shown that in this case
the stability constant is bounded independently of the oscillation. }



\subsection{Applications}

\label{subsec:phys}

In seismic inversion \textquotedblleft the forward model\textquotedblright\ is
the elastic wave equation and the (generally spatially dependent) elastic
parameters - density and Lam\'{e} parameters - have to be recovered in the
inversion process (so called \textquotedblleft full waveform
inversion\textquotedblright). The full forward model is thus a 3D system of
evolution equations, which can also be converted to a Helmholtz-like system in
the frequency domain by Fourier transform. The scalar equation \eqref{eq:Helm}
can be obtained as an approximation of this system where it is known as
\textquotedblleft the acoustic approximation\textquotedblright. In this case
$a=1/\rho$ and $c^{2}=\rho c_{P}^{2}$, where $\rho$ is the density and $c_{P}$
is the speed of longitudinal waves (also called $P$-waves) in the medium.
(See, e.g. \cite{Ta:84}.)

In the theory of photonic crystals, the spectra of Maxwell operators on
infinite periodic media are analysed by the Floquet transform.
The Maxwell system can be decomposed into TM (transverse magnetic) and TE
(transverse electric) components and the computation of each case reduces to a
scalar problem of form \eqref{eq:Helm}, with the TM mode having $a$ constant
and $c$ variable and the TE mode having $c$ constant and $a$ variable. A key
reference is Kuchment \cite{Ku:03}.

\subsection{Literature on this topic}

\label{subsec:literature}


The numerical analysis of heterogeneous Helmholtz problems can be traced back
at least to Aziz, Kellogg and Stephens \cite{azizhelmholtz}. Here problem
\eqref{eq:Helm} in 1D with $a=1$ and $c$ positive (and at least $C^{1}$),
together with mixed Dirichlet-Impedance boundary conditions, was considered. A
decomposition of the solution $u$ in terms of explicitly oscillating functions
$\exp(\pm\mathrm{i}\omega K(x))$ (with $K$ denoting an antiderivative of
$1/c$) and with smooth non-oscillatory multiplicative factors was obtained. In
this analysis, a test function (of \textquotedblleft
Morawetz-type\textquotedblright\ (cf. \cite{MorwaetzLudwig})) of the form
$v=au^{\prime}+bu$, with $a$ and $b$ chosen as solutions of a certain ODE was
used.

In \cite{MaIhBa:96}, the \textquotedblleft Rellich-type\textquotedblright%
\ test function $v\left(  x\right)  =xu^{\prime}\left(  x\right)  $
(\cite{Rellich_trick}) was used to prove stability bounds for a fluid-solid
interaction problem, modeled in 1D with piecewise constant material properties
having discontinuities at two points. This problem has some resemblance to
\eqref{eq:Helm} with jumping coefficients $a$ and $c$, but the interface
conditions at the jump points are different. Nevertheless \cite{MaIhBa:96}
provided a frequency-independent stability result for this problem.

In the very interesting recent PhD thesis \cite{FreletDiss} (see also \cite{BaChGo:17}) the technique of
\cite{MaIhBa:96} was adapted to \eqref{eq:Helm} in 1D with piecewise constant
coefficients, allowing an arbitrary number of jumps of arbitrary magnitude. A
frequency-independent stability estimate is proven with a constant explicit in
the number and magnitude of the jumps. This can grow exponentially with
respect to the number of jumps. This is a special case of our results obtained in \S 
\ref{subsec:stab}.  

{An early paper on the use of multiplier techniques to prove stability for the
Helmholtz equation with variable refractive index is \cite{PerthameVega},
where their conditions (1.7), (1.8) are similar to ours. This technique can
directly be generalized to variable diffusion (see, e.g. \cite{Brownetal})} in
general dimension, essentially adding a condition on $a$ to
\eqref{eq:restrictive} and allowing both $c$ and $a$ to vary but not to oscillate.

Related results are in the recent preprint \cite{OhVe:16}, in which a certain
Helmholtz transmission problem is considered, corresponding to scattering from
a homogenised multiscale material in 2D. Estimates of the form
\eqref{eq:apriori} are obtained, but with a stability constant which grows
{\imagg cubically in $\omega$.}

An alternative point of view is presented in \cite{FeLiLo:14}. This paper is
presented in the context of acoustic scattering in random media, but the
results can be restated so that they apply to deterministic media modelled by
\eqref{eq:Helm} with $a=1$ and $c$ variable. They show that if
$1/c=1+\varepsilon\eta$ where $\Vert\eta\Vert_{L^{\infty}\left(
\Omega\right)  }\leq1$ then wavenumber independent stability can be proved
provided a frequency-dependent $\varepsilon$ is chosen with $\varepsilon
=\mathcal{O}(\omega^{-1})$. Obviously this condition is very restrictive on
the range of allowable wave speeds but the result is interesting in that it
allows very rough perturbations of a constant speed. Such rough perturbations
are forbidden by \eqref{eq:restrictive}. This result is exploited in an
uncertainty quantification context in \cite{FeLiLo:14}. 
A condition which is very similar to \eqref{eq:restrictive} already
appeared in \cite{PerthameVega}; generalisations thereof in the case of
physically relevant interior and exterior Helmholtz scattering problems in
heterogeneous and stochastic media are discussed in detail in \cite{GrPeSp:17}.


There is considerable current interest in linear algebra problems arising from
the discretization of heterogeneous wave problems (e.g.
\cite{Ganesh_Helm_horn}, \cite{GrSpVa:16}). The stability theory of the
underlying PDE turned out to be key to rigorously understanding the
performance of iterative methods in the homogeneous case (e.g.
\cite{GaGrSp:15}, \cite{GrSpVa:17}), and so analogous results for the
heterogeneous case will again be important in the construction and analysis of
efficient solvers.


In obstacle scattering in homogeneous media, the stability of the solution
operator is often associated with the scattering boundary being
\textquotedblleft non-trapping\textquotedblright, and the introduction of
trapping boundaries (e.g. boundaries with cavities) causes the norm of the
solution operator to blow up (e.g. \cite{Graham_ActaNumerica_2012},
\cite{Betcke_cond_numb}). In some sense the imposition of the condition
\eqref{eq:restrictive} can be thought of as a condition which removes the
possibility of trapped waves. This correspondence makes more sense in exterior
scattering problems and is explored in more detail in \cite{GrPeSp:17}. 
A
condition closely related to \eqref{eq:restrictive} arises in
\cite{Ganesh_SignDef}. 
The role of heterogeneity in the far field is discussed
in \cite{PerthameHelm}. Heterogeneity is also discussed in the microlocal
analysis literature, where necessarily coefficients are assumed to be smooth.
A very general result which shows that the stability constant can grow at
worst exponentially in $\omega$ for $C^{\infty}\left(
\Omega\right)  $ coefficients is given in \cite{Bu:16}. For an analogous
result for the transmission problem see \cite{Be:03}. 

Finally we note that wave propagation and scattering in heterogeneous and
random media is of considerable interest in the inverse problems and imaging
community. There, the Green's function for the heterogeneous Helmholtz problem
- while not known analytically - is an important object of study.
For certain coefficient configurations it is possible to analyze its
qualitative behavior, derive expansions, as well as explicit dependencies on
certain parameters - see, e.g., \cite{Ammari_variable},
\cite{Garnier_papnicolau_Helm_green}, although the issues pursued there are
somewhat different to the topic of this paper.

\subsection{Some notation\label{SecSetting}}
For $s\geq0$, $1\leq p\leq\infty$, $W^{s,p}\left(  \Omega\right)  $ will
denote the classical Sobolev spaces of complex-valued functions with norm
$\left\Vert \cdot\right\Vert _{W^{s,p}\left(  \Omega\right)  }$. Sobolev
spaces on the boundary $\Gamma$ of $\Omega$ are defined in the usual manner
and are denoted by $W^{s,p}\left(  \Gamma\right)  $ and $H^{s}\left(
\Gamma\right)  $. As usual we write $L^{p}\left(  \Omega\right)  $ instead of
$W^{0,p}\left(  \Omega\right)  $ and $H^{s}\left(  \Omega\right)  $ for
$W^{s,2}\left(  \Omega\right)  $. The scalar product and norm in $L^{2}\left(
\Omega\right)  $ and $L^{2}\left(  \Gamma\right)  $ are denoted respectively
by
\[%
\begin{array}
[c]{llll}%
\left(  u,v\right)  :=\int_{\Omega}u\bar{v} & \text{and} & \left\Vert
u\right\Vert :=\left(  u,u\right)  ^{1/2} & \text{in }L^{2}\left(
\Omega\right)  ,\\
\left(  u,v\right)  _{\Gamma}:=\int_{\Gamma}u\bar{v} & \text{and} & \left\Vert
u\right\Vert _{\Gamma}:=\left(  u,u\right)  _{\Gamma}^{1/2} & \text{in }%
L^{2}\left(  \Gamma\right)  .
\end{array}
\]
For a (topological) vector space $V$, we denote its topological dual space
(i.e. all bounded linear functionals on $V$) by $V^{\prime}$ and we denote its
anti-dual (all bounded anti-linear functionals on $V$) by $V^{\times}$.

For parameters $\alpha_{0}<\alpha_{1}$ and any subspace $V\left(
\Omega\right)  \subseteq L^{\infty}\left(  \Omega,\mathbb{R}\right)  $, we
define
\[
V\left(  \Omega,\left[  \alpha_{0},\alpha_{1}\right]  \right)  :=\left\{  w\in
V\left(  \Omega\right)  :\alpha_{0}\leq\underset{x\in\Omega
}{\operatorname*{ess}\inf}\, w\left(  x\right)  \leq\underset{x\in
\Omega}{\operatorname*{ess}\sup}\, w\left(  x\right)  \leq\alpha_{1}\right\}
,
\]
and in doing so we implicitly assume $-\infty< \alpha_{0} < \alpha_{1} <
\infty$.

In what follows we shall
study the Helmholtz problem \eqref{eq:weak} and its adjoint. In abstract form
these read:
\begin{equation}
\text{Seek}\quad u\in\mathcal{H}:\quad B_{a,c}\left(  u,v\right)  =F\left(
v\right)  \qquad\text{for all}\quad v\in\mathcal{H}\text{.} \label{HelmAbs}%
\end{equation}
\begin{equation}
\text{Seek}\quad z\in\mathcal{H}:\quad B_{a,c}\left(  v,z\right)  =G\left(
v\right)  \qquad\text{for all}\quad v\in\mathcal{H}\text{.} \label{DualAbs}%
\end{equation}
where $F\in\mathcal{H}^{\times}$ and $G\in\mathcal{H}^{\prime}$ are given.
Throughout we shall make the basic assumptions that $a,c$ and $\beta$ are
real-valued and:

\[
a\in L^{\infty}\left(  \Omega,\left[  a_{\min},a_{\max}\right]  \right)
,\quad c\in L^{\infty}\left(  \Omega,\left[  c_{\min},c_{\max}\right]
\right)  ,\quad\beta\in L^{\infty}\left(  \Gamma,\left[  0,\beta_{\max
}\right]  \right)  ,
\]
for some positive $\amin, \cmin, \beta_{\max}$. 
Additional assumptions will be introduced where needed. We also assume that
the frequency $\omega$ is bounded away from $0$,
\[
\omega\geq\omega_{0}>0.
\]
{For $0<\lambda<1$, we denote by $C^{0,\lambda}\left(  \overline{\Omega
}\right)  $ the space of H\"{o}lder continuous functions with H\"{o}lder
exponent $\lambda$.}




\section{Well-Posedness via Unique Continuation\label{SecAbsExUnique}}

In this section we generalize the original ideas in \cite{MelenkDiss} and
apply the Unique Continuation Principle (UCP) to obtain uniqueness for
problems \eqref{HelmAbs}, \eqref{DualAbs}, under rather general conditions on
the coefficients $a$, $c$ and $\beta$. This, combined with the Fredholm
Alternative, proves well-posedness for these problems.
In our first result - Theorem \ref{thm:UCP} - we collect what is known about
the UCP from various references in a form which should be useful to numerical
analysts working on Helmholtz problems. Then, we prove the well-posedness in
Theorem \ref{thm:wp}. To our knowledge this procedure provides Helmholtz
well-posedness in the most general framework possible.  We
emphasize that the arguments and reasoning for the proof of well-posedness in
this section are original but were made available to the authorship of
\cite{GrPeSp:17} and anticipated therein.

Another approach is to first obtain \emph{a priori} bounds which in turn imply
uniqueness. However, as we explain below, up to current knowledge, a priori
bounds require quite strong conditions on the coefficients. 

\begin{theorem}
\label{thm:UCP} Suppose $a\in L^{\infty}(\Omega,[a_{\min},a_{\max}])$ is
real-valued with $0<a_{\min}$, and suppose $\kappa\in L^{p}(\Omega)$, for some
$p>1$. In addition, when $d\geq3$ we require
\begin{equation}
a\in C^{0,1}(\overline{\Omega})\quad\text{and}\quad\kappa\in L^{d/2}%
(\Omega)\ . \label{251}%
\end{equation}
Let $u\in\mathcal{H}$ satisfy:
\begin{equation}
\int_{\Omega}\left\{  a\nabla u\cdot\nabla\overline{v}+\kappa u\overline
{v}\right\}  \ =\ 0\quad\text{for all}\quad v\in\mathcal{H}\ . \label{252}%
\end{equation}
Then, if $u$ vanishes on a ball $B$ of positive radius, with $\overline
{B}\subset\Omega$, it follows that $u$ vanishes identically on $\Omega$.
\end{theorem}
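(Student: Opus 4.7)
The statement is a unique continuation result for weak solutions of the divergence-form equation $-\mathrm{div}(a\nabla u) + \kappa u = 0$, and my plan is to \emph{not} reprove anything from scratch but to reduce the assertion to known strong UCP results in the literature. The conclusion "$u$ vanishing on an open ball forces $u \equiv 0$ on $\Omega$" is the so-called \emph{weak} UCP; once I know the \emph{strong} UCP (a zero of infinite order at one point propagates to the whole connected component) for the operator in question, the weak UCP follows because $u \equiv 0$ on $B$ trivially provides a zero of infinite order at the centre of $B$, and $\Omega$ is assumed connected. So the task really is to match the assumptions of Theorem~\ref{thm:UCP} to a strong UCP result from the literature.

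First I would treat the low-dimensional cases. For $d=1$ the statement is essentially a Cauchy-uniqueness fact for the ODE $(au')' = \kappa u$, which I can handle directly: if $u$ vanishes on an open interval then $au'$ vanishes there as well, and the first-order system $(u,au')' = (\tfrac{1}{a}(au'), \kappa u)$ with $L^1$ right-hand side has a unique solution by Gronwall. For $d=2$ the required UCP for $L^\infty$ leading coefficient with $\kappa \in L^p$, $p>1$, is classical (Alessandrini, Schulz, based on Carleman estimates in two dimensions where the exceptionally strong regularity theory for elliptic equations with measurable coefficients makes this accessible).

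Next I would treat the case $d \ge 3$, which is the delicate one and explains the extra assumptions $a \in C^{0,1}(\overline{\Omega})$ and $\kappa \in L^{d/2}(\Omega)$. The plan is to pass from the divergence form to a non-divergence form: since $\nabla a \in L^\infty$, the equation can be rewritten as $-a \Delta u - \nabla a \cdot \nabla u + \kappa u = 0$, or, after dividing by $a$, as $-\Delta u + \bsg \cdot \nabla u + \widetilde{\kappa} u = 0$ with $\bsg \in L^\infty$ and $\widetilde{\kappa} \in L^{d/2}$. A bi-Lipschitz change of coordinates (depending on $a$) can further flatten the principal part to the Laplacian without changing the integrability classes of the drift and the potential. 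At this point the strong UCP of Jerison--Kenig for $-\Delta + V$ with $V \in L^{d/2}_{\mathrm{loc}}$ applies (with the lower-order gradient drift absorbed by the variants due to Wolff and Koch--Tataru), yielding the required conclusion.

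The principal obstacle I anticipate is bibliographic rather than analytic: one needs a UCP statement in the literature that simultaneously covers (i) divergence-form operators with merely Lipschitz principal coefficient and (ii) $L^{d/2}$ potentials, in a form directly quotable. A safe path is the two-step reduction outlined above, so that one can invoke the standard Jerison--Kenig theorem in its original Schr\"odinger form. I also expect to justify, perhaps by a brief remark, that the Lipschitz hypothesis on $a$ for $d \ge 3$ is genuinely needed and cannot be weakened to $L^\infty$ alone; this is exactly the content that Remark~\ref{Counterex} (referenced earlier, featuring Pli\'s- and Miller-type counter-examples) is designed to provide, and it explains the dimension-dependent split of hypotheses in the theorem.
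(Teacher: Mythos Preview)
Your plan is essentially the paper's own argument: for $d\ge 3$ rewrite in non-divergence form and invoke Wolff's UCP theorem (which handles both the $L^{d/2}$ potential and the $L^\infty$ drift simultaneously), for $d=2$ cite Alessandrini, and for $d=1$ argue directly via the ODE. Two small remarks: the bi-Lipschitz change of coordinates you mention is superfluous, since after dividing by $a$ the principal part is already $-\Delta$; and the paper inserts an explicit $W^{2,p}_{\mathrm{loc}}$ regularity step (via \cite[Theorem~9.11]{GiTr:83}, with $p=2d/(d+2)$ coming from $\kappa\in L^{d/2}$ and $u\in L^{2d/(d-2)}$) before applying \cite[Theorem~1]{Wolff_ucp}, which you will need to supply.
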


%

\proof
{\imag We start with the case $d \geq 3$.}  First note that  \eqref{252} implies
\[
-\nabla\cdot a\nabla u+\kappa u=0,\quad\text{almost everywhere on }\Omega.
\]
In the special case when $a=1$, we have $\Delta u=\kappa u$, and the result
follows from \cite[Theorem 6.3]{KenigUCP}. In the general case when $a\in
C^{0,1}(\overline{\Omega})$, Rademacher's theorem implies $\nabla
a\in(L^{\infty}(\Omega))^{d}$ and so we can write {\imag (cf.  \cite[Theorem 8.8] {GiTr:83})}
\[
-\Delta u=a^{-1}(\nabla a\cdot\nabla u-\kappa u),\quad\text{almost everywhere
on }\quad\Omega.
\]
Since $u\in\mathcal{H}\subset H^{1}(\Omega)$, the Sobolev embedding theorem
gives $u\in L^{\frac{2d}{d-2}}(\Omega)$ and using $\kappa\in L^{d/2}(\Omega)$
and H\"{o}lder's inequality, we obtain {\imag $\kappa u\in L^{p}(\Omega)$}, 
where
$p={{2d}/{(d+2)}=2-4/{(d+2)}}<2$. Moreover
\[
|\nabla a\cdot\nabla u|\leq|\nabla a|\,|\nabla u|.
\]
Then, local elliptic regularity estimates for the Laplace operator (for
example \cite[Theorem 9.11]{GiTr:83}) imply that $u\in W_{\mathrm{loc}}%
^{2,p}(\Omega)$, The result then follows from \cite[Theorem 1]{Wolff_ucp},
with $A=a_{\min}^{-1}|\kappa|$ and $B=a_{\min}^{-1}|\nabla a|$.

{The cases }$d=1,2$ are somewhat easier. When $d=2$ the proof can be found in
\cite[Theorem 1.1]{Alessandrini_ucp}. For $d=1$ the proof is very similar. If
$\kappa=0$, it is elementary: Let $\sigma\subset\Omega$ be a connected subset
on which $u=0$. Let $\xi,\zeta\in\sigma$ with $\xi\not =\zeta$. Then, the
solution of $-\left(  au^{\prime}\right)  ^{\prime}=0$ can be written in the
form $u\left(  x\right)  =C\int_{\xi}^{x}\left(  \frac{1}{a\left(  s\right)
}ds\right)  $ for all $x\in\Omega$. Since $u\left(  \zeta\right)  =0$, we have
$C=0$ and thus $u=0$. If $\kappa\in L^{p}\left(  \Omega\right)  $ for $p>1$
one applies the theory as in \cite[Sec. 2]{Alessandrini_ucp} and observes that
the Sobolev embedding theorems used in the proofs apply also for $d=1$. This
allows us to transform the equation $-\left(  au^{\prime}\right)  ^{\prime
}+\kappa u=0$ in $\Omega$ as explained in \cite[Sec. 3]{Alessandrini_ucp} to a
local equation in divergence form $-\left(  \hat{a}v^{\prime}+\hat{\kappa
}v\right)  ^{\prime}=0$, where $\hat{a}\in L^{\infty}\left(  \Omega,\left[
\hat{a}_{0},\hat{a}_{1}\right]  \right)  $ for some $0<\hat{a}_{0}\leq\hat
{a}_{1}<\infty$ and $\hat{\kappa}\in L^{t}$ for some $1<t<p$. The variation of
constant formula leads to%
\[
v\left(  x\right)  =C\left(  \int_{\xi}^{x}\frac{1}{\hat{a}\left(  t\right)
}\exp\left(  \int_{\xi}^{t}\frac{\hat{\kappa}\left(  s\right)  }{\hat
{a}\left(  s\right)  }ds\right)  dt\right)  \exp\left(  \int_{\xi}^{x}%
-\frac{\hat{\kappa}\left(  s\right)  }{\hat{a}\left(  s\right)  }ds\right)
\]
and one uses the same argument as in the case $\kappa=0$ to prove $v=0$.
\endproof

\begin{remark}
\label{Counterex}For $d\geq3$, the condition $a\in C^{0,1}\left(
\overline{\Omega},\mathbb{R}\right)  $ in Theorem \ref{thm:UCP}
is sharp in the following sense.
There exists a bounded domain $\Omega\subset\mathbb{R}^{d}$, a coefficient
$a\in%
{\displaystyle\bigcap\limits_{\lambda<1}}
C^{0,\lambda}\left(  \overline{\Omega}\right)  $, a parameter $\omega>0$, and
a function $u\in C^{\infty}\left(  \Omega\right)  $ with $\overline
{\mathrm{supp}\,u}\subsetneqq\Omega$ with
\[
-\operatorname*{div}\left(  a\nabla u\right)  +\omega^{2}u=0\qquad\text{in
}\Omega.
\]
A constructive proof is given in \cite{Filonov}. In \cite[Rem. 6.5]{KenigUCP}
an example is constructed which shows that also the assumption on $\kappa$ as
in (\ref{251}) is sharp.
\end{remark}

Before we prove our main result - Theorem \ref{thm:wp} - we need to state an
assumption which will be required.


\begin{assumption}
\label{ass:UCP}\quad

\begin{enumerate}
\item Either $\beta\in L^{\infty}(\Gamma_{N},[0,\beta_{\mathrm{max}}])$ or
$\beta\in L^{\infty}(\Gamma_{N},[-\beta_{\mathrm{max}},0])$, with
$\beta_{\mathrm{max}}>0$ and the set
{\igg $\gamma:= \mathrm{supp}(\beta)\subset\overline{\Gamma_{N}}$}
has positive $d-1$ dimensional measure.

\item There exists a bounded connected domain $\Omega^{\ast}\supseteq\Omega$
with the property that
\[
\Gamma\backslash\gamma\subset\partial\Omega^{\ast}\quad\text{and}\quad
\Omega^{\ast}\backslash\Omega\quad\text{has positive $d$ dimensional
measure}.
\]

\item The coefficients $a$ and $c$ have extensions $a^{\ast},c^{\ast}$ to
$\Omega^{\ast}$ with $a^{\ast}\in L^{\infty}(\Omega^{\ast},[a_{\min}^{\ast
},a_{\max}^{\ast}])$ and $c^{\ast}\in L^{\infty}(\Omega^{\ast},[c_{\min}%
^{\ast},c_{\max}^{\ast}])$ with $a_{\min}^{\ast}>0$ and $c_{\min}^{\ast}>0$.

\item When $d\geq3$ we require in addition $a^{\ast}\in C^{0,1}(\overline
{\Omega^{\ast}})$.

\begin{figure}[ptb]
\begin{center}
\includegraphics[
height=2.0531in,
width=2.3705in
]{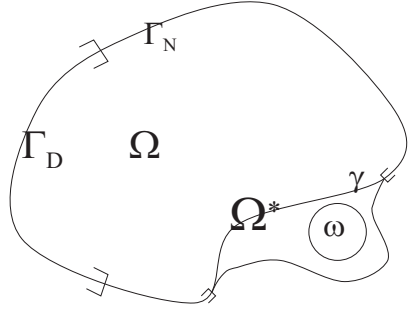}
\end{center}
\caption{Illustration of the geometric construction in Assumption
\ref{ass:UCP} for the unique continuation principle (UCP).}%
\label{Figucp}%
\end{figure}
\end{enumerate}
\end{assumption}

{\igg  Parts (1) and (2) of this assumption are illustrated in Figure \ref{Figucp}.
  Note that parts  (2) and (3)  are not restrictive
for bounded Lipschitz domains but merely stated to introduce the constants
$a_{\min}^{\ast}$ and $c_{\min}^{\ast}$ (cf. Cor. \ref{Cora=1} and its proof).
The assumption is key for proving uniqueness of the heterogeneous Helmholtz
equation via the unique continuation principle.}

{\igg \begin{theorem}
  \label{thm:wp} Suppose Assumption \ref{ass:UCP} is satisfied.
  Then the following hold.
  \begin{itemize}
  \item[(i)]
    $B_{a,c}$ is a bounded sesquilinear form, with    
 \begin{equation}
 |B_{a,c}(u,v)|\ \leq\ C_{a,c}\Vert u\Vert_{\mathcal{H},a,c}\Vert
 v\Vert_{\mathcal{H},a,c}\ ,  \label{Bcty}%
\end{equation}
where
$$C_{a,c}\  =\ 3 + \frac{\Ctrace^2}{2} \max\left\{ \amin^{-1} , \ \cmax^2 \left(\frac{1}{\omega_0^2} + \betamax^2\right)\right\}.  $$
\item[(ii)] The problems
\eqref{HelmAbs} and \eqref{DualAbs} have unique solutions in $\mathcal{H}$.
\item[(iii)] Under the additional assumption that the
right-hand sides in \eqref{HelmAbs} and \eqref{DualAbs} are given by $F\left(
v\right)  :=\left(  f,v\right)  $ and $G\left(  v\right)  :=\left(
v,\lambda\right)  $ for some functions $f,\lambda\in L^{2}\left(
\Omega\right)  $. Then, there exists a constant $C_{\mathrm{stab}}$
independent of $f$ and $\lambda$ such that the corresponding solutions $u$ and
$z$ satisfy%
\begin{equation}
\Vert u\Vert_{\mathcal{H},a,c}\leq C_{\mathrm{stab}}\left\Vert f\right\Vert
\quad\text{and\quad}\Vert z\Vert_{\mathcal{H},a,c}\leq C_{\mathrm{stab}%
}\left\Vert \lambda\right\Vert . \label{Cstabintro}%
\end{equation}
In general this constant depends on $a$, $c$, $\omega$, and $\Omega$.
\end{itemize} 
\end{theorem}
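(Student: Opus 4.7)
For part (i), I would bound each of the three summands of $B_{a,c}(u,v)$ separately. The volume integrals $\int a\nabla u\cdot\nabla\bar v$ and $\int(\omega/c)^{2}u\bar v$ are handled by Cauchy--Schwarz with weights $a^{1/2}$ and $\omega/c$, each giving a contribution of $\|u\|_{\cH,a,c}\|v\|_{\cH,a,c}$. For the boundary integral $\omega\int_{\Gamma}\beta u\bar v$, I would first apply Cauchy--Schwarz on $\Gamma$ to reach $\omega\betamax\|u\|_{\Gamma}\|v\|_{\Gamma}$, then combine the trace inequality $\|u\|_{\Gamma}\leq \Ctrace\|u\|_{H^{1}(\Omega)}$ with the elementary bounds $\|\nabla u\|^{2}\leq \amin^{-1}\|u\|_{\cH,a,c}^{2}$ and $\|u\|^{2}\leq(\cmax/\omega_{0})^{2}\|u\|_{\cH,a,c}^{2}$ (the latter using $\omega\geq\omega_{0}$), distributing the scalar factors $\omega$ and $\betamax$ between the two boundary norms via Young's inequality $ab\leq\tfrac12(a^{2}+b^{2})$ so as to produce the combination $1/\omega_{0}^{2}+\betamax^{2}$ appearing inside the maximum in the stated $C_{a,c}$.

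\textbf{Existence and uniqueness.} For part (ii), the shifted form $\widetilde B(u,v):=B_{a,c}(u,v)+2\int_{\Omega}(\omega/c)^{2}u\bar v$ has $\operatorname{Re}\widetilde B(u,u)=\|u\|_{\cH,a,c}^{2}$, hence is coercive; the subtracted pieces---the $L^{2}$ inner product $\int(\omega/c)^{2}u\bar v$ and the boundary form $\mathrm{i}\omega\int_{\Gamma}\beta u\bar v$---induce compact operators on $\cH$ via Rellich--Kondrachov and compactness of the trace $H^{1}(\Omega)\to L^{2}(\Gamma)$. The Fredholm alternative then reduces existence in \eqref{HelmAbs} and \eqref{DualAbs} to uniqueness. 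To prove uniqueness of \eqref{HelmAbs}, I would set $v=u$ in $B_{a,c}(u,u)=0$ and read off the imaginary part to obtain $\omega\int_{\Gamma}\beta|u|^{2}=0$; since $\beta$ is sign-definite and supported on $\gamma$ of positive $(d-1)$-measure (Assumption~\ref{ass:UCP}(1)), this forces $u\equiv 0$ on $\gamma$, and the impedance condition $a\partial_{n}u=\mathrm{i}\omega\beta u$ then forces $\partial_{n}u\equiv 0$ on $\gamma$. Extending $u$ by zero to the larger domain $\Omega^{\ast}$ from Assumption~\ref{ass:UCP}(2)--(4) produces $\tilde u\in H^{1}(\Omega^{\ast})$ satisfying the weak form $-\nabla\cdot(a^{\ast}\nabla\tilde u)-(\omega/c^{\ast})^{2}\tilde u=0$ on $\Omega^{\ast}$, since the vanishing Cauchy data on $\gamma$ rules out any interface contribution. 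Because $\tilde u$ vanishes on the positive-measure set $\Omega^{\ast}\setminus\Omega$, which contains a ball, Theorem~\ref{thm:UCP} (applied with $\kappa=-(\omega/c^{\ast})^{2}\in L^{\infty}(\Omega^{\ast})$) yields $\tilde u\equiv 0$ on $\Omega^{\ast}$ and hence $u\equiv 0$ on $\Omega$. The analogous argument---using the adjoint impedance condition $a\partial_{n}z+\mathrm{i}\omega\beta z=0$ derived from $B_{a,c}(v,z)=0$ via integration by parts---closes the adjoint problem \eqref{DualAbs}.

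\textbf{Stability.} For part (iii), combining (i) and (ii) via the open mapping theorem shows that the bounded linear operators $\cH\to\cH^{\times}$ and $\cH\to\cH^{\prime}$ induced by $B_{a,c}$ and its adjoint are bijections, so there exists a finite constant $C^{\prime}=C^{\prime}(a,c,\omega,\Omega)$ with $\|u\|_{\cH,a,c}\leq C^{\prime}\|F\|_{\cH^{\times}}$ and $\|z\|_{\cH,a,c}\leq C^{\prime}\|G\|_{\cH^{\prime}}$. Specialising to $F(v)=(f,v)$ and $G(v)=(v,\lambda)$ with $f,\lambda\in L^{2}(\Omega)$, Cauchy--Schwarz together with $\|v\|\leq(\cmax/\omega_{0})\|v\|_{\cH,a,c}$ gives $|F(v)|,|G(v)|\leq (\cmax/\omega_{0})\max\{\|f\|,\|\lambda\|\}\|v\|_{\cH,a,c}$, and absorbing this factor into the constant yields \eqref{Cstabintro} with $\Cstab:=(\cmax/\omega_{0})C^{\prime}$. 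The delicate step in the entire argument is the unique-continuation step of part (ii): one must verify that the zero-extension $\tilde u$ actually solves the PDE weakly across $\gamma$ when $a$ is only $L^{\infty}$, and for $d\geq 3$ one must invoke the Lipschitz extension $a^{\ast}$ from Assumption~\ref{ass:UCP}(4)---a hypothesis whose necessity is precisely the content of Remark~\ref{Counterex}.
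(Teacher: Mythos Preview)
Your overall strategy for (ii) and (iii) matches the paper's and is correct: Fredholm alternative plus uniqueness via unique continuation on the extended domain $\Omega^{\ast}$, followed by the bounded-inverse theorem. (A small write-up inconsistency: with your $\widetilde B$ already containing the boundary form $b_{\Gamma}$, only the $L^{2}$ piece is a compact perturbation, not both; the paper instead takes the purely real coercive part $b_{1}$ and peels off \emph{both} $b_{2}$ and $b_{\Gamma}$ as compact perturbations. Either decomposition works.)

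The gap is in (i). The stated constant $C_{a,c}$ is independent of $\omega$ (it depends only on $\omega_{0}$), and obtaining this requires the \emph{multiplicative} trace inequality
\[
\|u\|_{\Gamma}\ \leq\ \Ctrace\,\|u\|^{1/2}\,\|u\|_{H^{1}(\Omega)}^{1/2},
\]
not the standard one $\|u\|_{\Gamma}\leq\Ctrace\|u\|_{H^{1}(\Omega)}$ that you invoke. With the multiplicative version one gets
$\|\sqrt{\omega|\beta|}\,u\|_{\Gamma}^{2}\leq\Ctrace^{2}\,\omega\betamax\,\|u\|\,\|u\|_{H^{1}}$,
and Young's inequality then produces
$\tfrac{\Ctrace^{2}}{2}\bigl(\omega^{2}\betamax^{2}\|u\|^{2}+\|u\|_{H^{1}}^{2}\bigr)$;
the dangerous factor $\omega^{2}$ now sits on $\|u\|^{2}$ and is absorbed into $\|(\omega/c)u\|^{2}$, yielding $C_{3}^{2}=\tfrac{\Ctrace^{2}}{2}\max\{\amin^{-1},\cmax^{2}(\omega_{0}^{-2}+\betamax^{2})\}$. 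With the standard trace inequality your bound is $\omega\betamax\Ctrace^{2}\|u\|_{H^{1}}^{2}$, and the factor $\omega$ attached to the gradient part $\|\nabla u\|^{2}\leq\amin^{-1}\|u\|_{\cH,a,c}^{2}$ cannot be absorbed; no redistribution of $\omega$ and $\betamax$ via $ab\leq\tfrac12(a^{2}+b^{2})$ removes it, and the resulting continuity constant grows linearly in $\omega$. So your sketch proves continuity with \emph{some} constant, but not the $\omega$-free constant asserted in the theorem.
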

}
%

\proof
We give the proof for \eqref{HelmAbs}. The proof for \eqref{DualAbs} is
identical. We introduce the parameter-dependent norm on $\mathcal{H}$:
\[
\Vert v\Vert_{\mathcal{H},a,c}^{2}\ :=\ \int_{\Omega}\left\{  a|\nabla
v|^{2}+\left(  \frac{\omega}{c}\right)  ^{2}|u|^{2}\right\}  \ ,
\]
and we begin by writing
\begin{equation}
B_{a,c}\ =\ b_{1}+b_{2}+b_{\Gamma}\ , \label{Bstar}%
\end{equation}
where
\begin{align}
b_{1}\left(  u,v\right)  :=\int_{\Omega}\{a\nabla u.\nabla\overline{v}+\left(
\frac{\omega}{c}\right)  ^{2}u\overline{v}\},  &  \quad b_{2}\left(
u,v\right)  :=-2\int_{\Omega}\left(  \frac{\omega}{c}\right)  ^{2}%
u\overline{v},\\
\text{and}\quad\quad b_{\Gamma}\left(  u,v\right)   &  :=-\operatorname*{i}%
\omega\int_{\Gamma_{N}}\beta u\overline{v}. \label{defeta}%
\end{align}
We observe that $b_{1}$ and $b_{2}$ are Hermitian. Moreover
\begin{equation}
b_{1}\left(  u,u\right)  =\left\Vert u\right\Vert _{\mathcal{H},a,c}^{2},
\label{b1coer}%
\end{equation}
and a combination of H\"{o}lder and Cauchy-Schwarz inequalities leads to the
continuity estimates:
\begin{align}
\left\vert b_{1}\left(  u,v\right)  \right\vert  &  \leq\left\Vert
u\right\Vert _{\mathcal{H},a,c}\left\Vert v\right\Vert _{\mathcal{H}%
,a,c},\nonumber\\
\left\vert b_{2}\left(  u,v\right)  \right\vert  &  \leq2\left\Vert
\frac{\omega}{c}u\right\Vert \left\Vert \frac{\omega}{c}v\right\Vert
\leq2\left\Vert u\right\Vert _{\mathcal{H},a,c}\left\Vert v\right\Vert
_{\mathcal{H},a,c},\label{a1a2a}\\
\left\vert b_{\Gamma}\left(  u,v\right)  \right\vert  &  \leq\left\Vert
\sqrt{\omega|\beta|}u\right\Vert _{\Gamma}\left\Vert \sqrt{\omega|\beta
|}v\right\Vert _{\Gamma}. \label{a1a2b}%
\end{align}
We recall also the multiplicative trace inequality:
\begin{equation}
\Vert u\Vert_{\Gamma}\ \leq\ C_{\mathrm{trace}}\Vert u\Vert^{1/2}\Vert
u\Vert_{H^{1}(\Omega)}^{1/2} \label{eq:mtrace}%
\end{equation}
(For $d=2,3$, this is the last formula in \cite[p.41]{Grisvard85}. For $d=1$
it can be obtained by considering the integral of $(|u|^{2}Z)^{\prime}$ where
$Z$ is the linear function with values $-1,\ 1$ at the left- and right-hand
boundaries of the domain.)
Combining this with Young's inequality we obtain%
\begin{align}
\left\Vert \sqrt{\omega\beta}u\right\Vert _{\Gamma}^{2}  &  \ \leq
\ C_{\operatorname*{trace}}^{2}\omega\beta_{\max}\left\Vert u\right\Vert
\left\Vert u\right\Vert _{H^{1}\left(  \Omega\right)  }\leq
C_{\operatorname*{trace}}^{2}\left(  \frac{\omega^{2}\beta_{\max}^{2}}%
{2}\left\Vert u\right\Vert ^{2}+\frac{1}{2}\left\Vert u\right\Vert
_{H^{1}\left(  \Omega\right)  }^{2}\right) \nonumber\\
&  \ \leq\ C_{\operatorname*{trace}}^{2}\left(  \frac{\left(  1+\omega
^{2}\beta_{\max}^{2}\right)  }{2}\frac{c_{\max}^{2}}{\omega^{2}}\left\Vert
\frac{\omega}{c}u\right\Vert ^{2}+\frac{1}{2a_{\min}}\left\Vert \sqrt{a}\nabla
u\right\Vert ^{2}\right) \nonumber\\
&  \ \leq\ C_{3}^{2}\left\Vert u\right\Vert _{\mathcal{H},a,c}^{2},
\label{Ctrace1}%
\end{align}
with
\[
C_{3}=\frac{C_{\operatorname*{trace}}}{\sqrt{2}}\max\left\{  a_{\min}%
^{-1/2},c_{\max}\sqrt{\frac{1}{\omega_{0}^{2}}+\beta_{\max}^{2}}\right\}  .
\]
This proves the continuity of $b_{\Gamma}$ i.e.
\[
\left\vert b_{\Gamma}\left(  u,v\right)  \right\vert \leq C_{3}^{2}\left\Vert
u\right\Vert _{\mathcal{H},a,c}\left\Vert v\right\Vert _{\mathcal{H},a,c}.
\]
{\igg The result  (i) then follows directly.}  
 {\igg To prove (ii),}  for $u\in\mathcal{H}$, let $K_{2}u$ be the unique solution of
the problem $b_{1}(K_{2}u,v)=b_{2}(u,v),\ v\in\mathcal{H}$, which is
guaranteed to exist by the Lax-Milgram Lemma. Similarly, let $K_{\Gamma}u$
denote the unique solution of $b_{1}(K_{\Gamma}u,v)=b_{\Gamma}(u,v),\ v\in
\mathcal{H}$, and let $SF\in\mathcal{H}$ denote the unique solution of the
problem $b_{1}(SF,v)=F(v),\ v\in\mathcal{H}$, where $F$ appears on the
right-hand side of \eqref{HelmAbs}. Then it is easy to see that
\eqref{HelmAbs} is equivalent to the operator equation in $\mathcal{H}$:
\begin{equation}
(I+K_{2}+K_{\Gamma})u=SF\ . \label{255}%
\end{equation}
Moreover we claim that the operators $K_{2}$ and $K_{\Gamma}$ are compact.
(This is verified at the end of the proof.) Hence by the Fredholm Alternative,
uniqueness for problem \eqref{255} implies unique solvability.

To prove uniqueness, suppose $F=0$ and let $u\in\mathcal{H}$ be a solution of
\[
B_{\igg {a,c}} (u,v)=0,\quad\text{for all}\quad v\in\mathcal{H}.
\]
Putting $v=u$ and taking the imaginary part (and noting $\omega\geq\omega
_{0}>0$), we obtain
\[
\int_{\Gamma_{N}}\beta|u|^{2}=0\,
\]
and then Assumption \ref{ass:UCP} (1) implies that $u=0$ almost everywhere on
$\gamma$. Using Assumption \ref{ass:UCP} (2), (3) we can extend $u$ by zero to
a function $u^{\ast}$ on $\mathcal{H}^{\ast}=H^{1}(\Omega^{\ast})$. Defining
\[
B^{\ast}\left(  u,v\right)  :=\int_{\Omega^{\ast}}\left\{  a^{\ast}\nabla
u \cdot \nabla\overline{v}\ +\ \left(  \frac{\omega}{c^{\ast}}\right)
^{2}u\overline{v}\right\}  \quad\forall u,v\in\mathcal{H}^{\ast},
\]
we have $B^{\ast}(u^{\ast},v)=0$ for all $v\in\mathcal{H}^{\ast}$. Now, since
$u^{\ast}$ vanishes on $\Omega^{\ast}\backslash\Omega$, Theorem \ref{thm:UCP}
tells us that $u$ is identically zero. (The assumptions of Theorem
\ref{thm:UCP} are satisfied because of Assumptions \ref{ass:UCP} (3) and (4).)

To finish the proof we show the compactness of $K_{2},K_{\Gamma}$. By
\eqref{b1coer} and \eqref{a1a2a} we have
\[
\Vert K_{2}u\Vert_{\mathcal{H},a,c}\ \leq\ 2\left\Vert \frac{\omega}%
{c}u\right\Vert \ \leq\ 2\frac{\omega}{c_{\min}}\Vert u\Vert,
\]
which shows $K_{2}$ is bounded as an operator from $L_{2}(\Omega)$ to
$\mathcal{H}$, and is thus compact on $\mathcal{H}$. Similarly, using
\eqref{a1a2b} and \eqref{Ctrace1}, we have
\[
\Vert K_{\Gamma}u\Vert_{\mathcal{H},a,c}\ \leq\ C_{3}\left\Vert \sqrt
{\omega|\beta|}u\right\Vert _{\Gamma}\ \leq C_{3}\sqrt{\omega\beta_{\max}%
}\Vert u\Vert_{\Gamma}\ \leq\ C_{3}C_{\mathrm{trace}}^{\prime}\sqrt
{\omega\beta_{\max}}\Vert u\Vert_{H^{3/4}(\Omega)}\ ,
\]
where we used the continuity of the trace operator from $H^{3/4}%
(\Omega)\rightarrow L^{2}(\Gamma)$, with continuity constant
$C_{\mathrm{trace}}^{\prime}$. Now since $H^{3/4}(\Omega)$ is compactly
embedded in $H^{1}(\Omega)$ we then have compactness of $K_{\Gamma}$.%

{\igg Part (iii) then follows immediately because (for example)  $\Vert SF \Vert_{\cH} \lesssim  \Vert F \Vert_{\cH} \leq \Vert f \Vert$ (where the  hidden constant may depend on $a,c, \omega, \Omega$).  }

\endproof

In
some of our applications in Section \ref{SecGalDisc}, we will employ the
well-posedness of the Helmholtz problem in the following setting.

\begin{corollary}
\label{Cora=1}Let $\Omega\subset\mathbb{R}^{d}$ be a{ bounded Lipschitz}
domain and assume $a\in C^{0,1}\left(  \Omega,\left[  a_{\min},a_{\max
}\right]  \right)  $ and $c\in L^{\infty}\left(  \Omega,\left[  c_{\min
},c_{\max}\right]  \right)  $ for some $0<a_{\min}\leq a_{\max}<\infty$ and
$0<c_{\min}\leq c_{\max}<\infty$. {\igg Let $\Gamma_{N} \subseteq \Gamma$ have positive $d-1$ dimensional measure},  and let
$\beta:=\sqrt{a}/c$. Assume that the right-hand sides in \eqref{HelmAbs} and
\eqref{DualAbs} are given by $F\left(  v\right)  :=\left(  f,v\right)  $ and
$G\left(  v\right)  :=\left(  v,\lambda\right)  $ for some functions
$f,\lambda\in L^{2}\left(  \Omega\right)  $. Then, there exists a constant
$C_{\mathrm{stab}}=C_{\mathrm{stab}}\left(  \omega,a,c,\Omega,d\right)  $
independent of $f$ and $\lambda$ such that the corresponding solutions $u$ and
$z$ satisfy (\ref{Cstabintro}).
\end{corollary}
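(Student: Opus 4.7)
The plan is to derive Corollary \ref{Cora=1} as a direct application of Theorem \ref{thm:wp}(iii); the task reduces to verifying that all four parts of Assumption \ref{ass:UCP} hold under the hypotheses of the corollary. I would organize the verification in the following order.

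\textbf{Step 1 (Boundary coefficient).} Since $a$ is H\"{o}lder (in fact Lipschitz) on $\overline{\Omega}$ with $a\geq a_{\min}>0$, $\sqrt{a}$ has a well-defined trace in $L^{\infty}(\Gamma)$. Combined with $c\in L^{\infty}(\Omega,[c_{\min},c_{\max}])$ and the assumption that $c$ admits an $L^{\infty}$-meaningful trace on $\Gamma_N$ (implicit in defining $\beta:=\sqrt{a}/c$), one obtains
\[
\frac{\sqrt{a_{\min}}}{c_{\max}}\ \leq\ \beta\ \leq\ \frac{\sqrt{a_{\max}}}{c_{\min}}\quad\text{on }\Gamma_{N}.
\]
Thus $\beta\in L^{\infty}(\Gamma_{N},[0,\beta_{\max}])$ with $\beta_{\max}=\sqrt{a_{\max}}/c_{\min}$, and moreover $\beta$ is \emph{strictly positive} on $\Gamma_{N}$. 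In particular $\gamma:=\operatorname{supp}(\beta)=\overline{\Gamma_{N}}$ inherits positive $d-1$ dimensional measure from the hypothesis on $\Gamma_{N}$. This gives Assumption \ref{ass:UCP}(1).

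\textbf{Step 2 (Geometric extension and coefficient extensions).} Since $\Omega$ is a bounded Lipschitz domain and $\Gamma_{N}$ has positive $(d-1)$-measure, one can glue a small Lipschitz collar $U$ to $\Omega$ across $\overline{\Gamma_{N}}$ to obtain a bounded connected Lipschitz domain $\Omega^{\ast}:=\Omega\cup U\supseteq\Omega$ with $\Omega^{\ast}\setminus\Omega$ of positive $d$-measure and $\Gamma\setminus\gamma\subset\partial\Omega^{\ast}$, giving part (2). For part (3) I would extend $c$ by the constant $c_{\min}$ (or any value in $[c_{\min},c_{\max}]$) on $\Omega^{\ast}\setminus\Omega$, so that $c^{\ast}\in L^{\infty}(\Omega^{\ast},[c_{\min},c_{\max}])$. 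For $a$, when $d\leq 2$ the same constant-extension suffices to give an $L^{\infty}$ extension with the same bounds. When $d\geq 3$, part (4) demands a Lipschitz extension; I would invoke the McShane--Whitney extension theorem (or an explicit reflection across the Lipschitz portion of the boundary) to produce $a^{\ast}\in C^{0,1}(\overline{\Omega^{\ast}})$ with $a_{\min}\leq a^{\ast}\leq a_{\max}$ preserved (truncation at $a_{\min}, a_{\max}$ preserves the Lipschitz property).

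\textbf{Step 3 (Conclusion).} With Assumption \ref{ass:UCP} fully verified, Theorem \ref{thm:wp}(iii) applies directly: both \eqref{HelmAbs} and \eqref{DualAbs} with the $L^{2}$-duality right-hand sides $F(v)=(f,v)$ and $G(v)=(v,\lambda)$ admit unique solutions $u,z\in\mathcal{H}$ satisfying \eqref{Cstabintro} with a constant $\Cstab$ that depends on $a,c,\omega,\Omega$ (through the abstract constants produced by the Fredholm argument).

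The argument is essentially a verification; the only mildly non-trivial point will be Step 2 in dimension $d\geq 3$, where one must extend the Lipschitz coefficient $a$ to $\overline{\Omega^{\ast}}$ while preserving both the Lipschitz regularity and the positivity bounds. This is standard, but worth noting explicitly since it is exactly the hypothesis whose sharpness is emphasized in Remark \ref{Counterex}.
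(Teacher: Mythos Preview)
Your proposal is correct and follows exactly the same route as the paper: verify that the hypotheses of the corollary imply Assumption~\ref{ass:UCP} and then invoke Theorem~\ref{thm:wp}(iii). The paper's proof is a two-sentence sketch that singles out only the Lipschitz extension of $a$ to $\mathbb{R}^d$ as the one point worth mentioning; your Steps~1--3 simply flesh out the details (the positivity of $\beta$, the collar construction of $\Omega^\ast$, and the McShane--Whitney extension with truncation) that the paper leaves implicit.
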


%

\proof
It is easy to verify that the assumptions in this corollary imply Assumption
\ref{ass:UCP}. In particular, it is well known that the Lipschitz function $a$
can be extended to a Lipschitz function in $\mathbb{R}^{d}$ (with same
Lipschitz constant).%
\endproof



\begin{remark}
The condition on $c^{\ast}$ in Assumption \ref{ass:UCP} could be relaxed,
since Theorem \ref{thm:UCP} only requires that $\kappa=\left(  \omega
/c\right)  ^{2}\in L^{p}\left(  \Omega^{\ast}\right)  $ for some $p>1$.
\end{remark}

\section{Frequency Explicit Estimates for $\Omega\subset\mathbb{R}^{d}$,
$d\geq2$\label{SecDleq2}}


The first frequency explicit stability estimate for the Helmholtz problem with
constant coefficients was given in \cite[Prop 8.1.4]{MelenkDiss}. There the
key idea (used again in many subsequent works) was to use the test function of
Rellich type
\begin{equation}
v:=\mathbf{x}\cdot\nabla u\label{rellich}%
\end{equation}
in the weak form \eqref{eq:weak} and combine the resulting identity with
estimates obtained using the test function $v=u$ and taking real and imaginary
parts. An alternative way of thinking about this is to use a parametrized
linear combination of these test functions (the so-called Morawetz multiplier)
and then to choose the parameters appropriately to obtain the desired result.
This method can also be applied to the heterogeneous Helmholtz problem,
leading to a stability estimate subject to a strong restriction on the
coefficients. Since this is the starting point for our analysis, we provide a
sketch of this procedure for the restricted case {\igg $a = 1 $}  and $c$ variable, with
remarks as to how this can be generalised afterwards.


\begin{assumption}
\label{Agend}\qquad

\begin{enumerate}
\item $\Omega\subset\mathbb{R}^{d}$, $d\geq2$, is a{ Lipschitz} domain which
is star-shaped with respect{ to a ball centred at the origin,} i.e., there
exists a constant $\gamma>0$ such that%
\begin{equation}
\mathbf{x}\cdot \mathbf{n}\geq\gamma\qquad\text{for all }\quad\mathbf{x}\in
\Gamma\label{Astarshape}%
\end{equation}
and we
set
$R:=\sup\left\{  \left\vert \mathbf{x}\right\vert :\mathbf{x}\in
\Omega\right\}  $.

\item We restrict to (\ref{eq:Helm}) with $a=1$ and $\beta=1/c$.

\item $\Gamma=\Gamma_{{N}}$, i.e., we consider the pure
impedance problem, so $\mathcal{H} = H^{1}(\Omega)$.

\item The functions on the right-hand side of (\ref{eq:weak}) satisfy $f\in
L^{2}\left(  \Omega\right)  $ and $g=0$.

\end{enumerate}
\end{assumption}

\noindent 
{\igg {\em Some remarks on Assumption \ref{Agend}}:  We require $\Omega$ to be star-shaped since then
 Theorem \ref{MainTheoremdl2} becomes a generalization  of \cite[Prop. 8.1.4]%
{MelenkDiss}. If the star-shaped  requirement is removed we expect that the proof will become
much more involved,  because the integral equation techniques which are employed
in \cite{EsMe:10} and \cite{Sp:13} to remove the star-shaped
condition for Helmholtz equations with constant coefficients rely on the
explicit knowledge of the fundamental solution (which is not known for
heterogeneous Helmholtz equations). The extension of Theorem
\ref{MainTheoremdl2} below to more general coefficients will be discussed in Remark
\ref{RemCommMainTheo}. Conditions (3) and (4) in Assumption \ref{Agend} can be
generalized in a straightforward way to cover the cases $g\neq0$ and
$\Gamma=\overline{\Gamma_{{N}}}\cup\overline{\Gamma
_{{D}}}$ provided the impedance part $\Gamma_{{N}}$ has positive $d-1$ dimensional surface measure.  We do not do that here to avoid making the paper longer. 
{Under Assumption \ref{Agend}, the equation \eqref{eq:Helm} should be understood
  distributionally and is equivalent to the weak form \eqref{eq:weak}. }
}

\begin{theorem}
\label{MainTheoremdl2}Let Assumption \ref{Agend} be satisfied; let $\omega
\geq\omega_{0}$ for some $\omega_{0}>0$ and let $c\in L^{\infty}\left(
\Omega,\left[  c_{\min},c_{\max}\right]  \right)  $ for some $0<c_{\min}\leq
c_{\max}<\infty$. Further we assume $c\in C^{0,1}\left(  \overline{\Omega
}\right)  $ and that there exists some $\theta>0$ such that%
\begin{equation}
{\color{black}\frac{\mathbf{x}\cdot \nabla c(\mathbf{x})}{c(\mathbf{x})}%
\ \leq\ 1-\theta,\quad\text{for all }\quad\mathbf{x}\in\overline{\Omega}\ .}
\label{eq:condc}%
\end{equation}
Let  $u\in\mathcal{H}$ denote  the
solution of (\ref{eq:weak}). Then  we have the a priori bound:
\begin{equation}
\left\Vert \nabla u\right\Vert +\left\Vert \frac{\omega}{c}u\right\Vert
\ \leq\ \color{black}{\Cstab}\left\Vert f\right\Vert , \label{L2estimate}%
\end{equation}
where $C_{\mathrm{stab}}$ depends continuously on the positive real numbers
$\omega_{0},c_{\min},c_{\max},R,\gamma,{d},\theta$, {\igg   but is independent of $\omega$}. Moreover $C_{\mathrm{stab}%
}$ may become unbounded if one or more of these parameter tends to $0$ or
$\infty$.
\end{theorem}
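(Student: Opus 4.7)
The plan is to combine two Rellich/Morawetz identities obtained from testing the PDE against $v = u$ and $v = \mathbf{x}\cdot \nabla u$ (with possibly a further multiplier $\alpha u$ added), and use the sign condition \eqref{eq:condc} to absorb the only variable-coefficient contribution that could have the ``wrong'' sign. The standard ingredients are the star-shaped condition $\mathbf{x}\cdot\mathbf{n}\ge\gamma>0$, the impedance boundary condition in the simple form $\partial_n u = \mathrm{i}(\omega/c)u$ (since $a=1$, $\beta=1/c$, $g=0$), and Young's inequality on the right-hand side.

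First, I would take $v=u$ in \eqref{eq:weak}. The imaginary part yields $\omega\int_\Gamma(1/c)|u|^2 = -\mathrm{Im}\int_\Omega f\bar u$, so that
\begin{equation*}
\int_\Gamma (1/c)|u|^2 \;\leq\; \omega^{-1}\|f\|\,\|u\|,
\end{equation*}
which, via the impedance BC, simultaneously controls $\int_\Gamma|\partial_n u|^2/c$. The real part gives $\|\nabla u\|^2-\|(\omega/c)u\|^2 = \mathrm{Re}\int_\Omega f\bar u$, which will let me trade $\|\nabla u\|^2$ against $\|(\omega/c)u\|^2$ plus a small $\|f\|\|u\|$ remainder.

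Next, the main step is the Rellich identity. Multiplying $-\Delta u - (\omega/c)^2u = f$ by $\mathbf{x}\cdot\nabla\bar u$, integrating, and using
\begin{equation*}
2\mathrm{Re}\int_\Omega \nabla u\cdot D^2\bar u\,\mathbf{x} \;=\; \int_\Gamma(\mathbf{x}\cdot\mathbf{n})|\nabla u|^2 - d\int_\Omega |\nabla u|^2,
\end{equation*}
and
\begin{equation*}
-2\mathrm{Re}\int_\Omega (\omega/c)^2 u\,(\mathbf{x}\cdot\nabla\bar u) = -\int_\Gamma (\omega/c)^2(\mathbf{x}\cdot\mathbf{n})|u|^2 + d\int_\Omega (\omega/c)^2|u|^2 - 2\int_\Omega (\omega/c)^2\frac{\mathbf{x}\cdot\nabla c}{c}|u|^2,
\end{equation*}
I arrive at the identity
\begin{equation*}
(2-d)\|\nabla u\|^2 + \int_\Omega (\omega/c)^2\!\left[d - 2\,\tfrac{\mathbf{x}\cdot\nabla c}{c}\right]|u|^2 \;+\; \mathcal{B} \;=\; 2\mathrm{Re}\int_\Omega f(\mathbf{x}\cdot\nabla\bar u),
\end{equation*}
where $\mathcal{B}$ collects boundary contributions. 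Splitting $\nabla u=\partial_n u\,\mathbf{n}+\nabla_T u$ and using the impedance BC, $\mathcal{B}$ reduces (up to a harmless tangential boundary term controlled via the surface divergence of $\mathbf{x}_T$) to a combination of $\int_\Gamma(\mathbf{x}\cdot\mathbf{n})|\nabla_T u|^2$ (positive because $\mathbf{x}\cdot\mathbf{n}\ge\gamma$) and $\int_\Gamma(\mathbf{x}\cdot\mathbf{n})(\omega/c)^2|u|^2$, which is absorbed via the $v=u$ estimate above. The key point is that \eqref{eq:condc} gives
\begin{equation*}
d - 2\,\tfrac{\mathbf{x}\cdot\nabla c}{c} \;\geq\; d-2+2\theta\;\geq\;2\theta,
\end{equation*}
so the $(\omega/c)^2|u|^2$ term on the left is positive.

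To remove the $(2-d)\|\nabla u\|^2$ term (which has the wrong sign for $d\ge 3$), I add $(d-1)/2$ times the real part of the $v=u$ identity -- equivalently, use the Morawetz multiplier $v=\mathbf{x}\cdot\nabla u+\tfrac{d-1}{2}u$. This leaves me with a positive coefficient in front of $\|\nabla u\|^2$ and the coefficient $d-(d-1)-2(\mathbf{x}\cdot\nabla c)/c\ge 2\theta-1+1 = \ldots$; after the algebraic regrouping the surviving coefficient on $\int(\omega/c)^2|u|^2$ is still bounded below by a positive multiple of $\theta$ (possibly up to a factor depending on $c_{\min}/c_{\max}$). The right-hand side is estimated by Cauchy--Schwarz: $|\int f(\mathbf{x}\cdot\nabla\bar u)|\le R\|f\|\|\nabla u\|$ and $|\int f\bar u|\le (c_{\max}/\omega_0)\|f\|\|(\omega/c)u\|$, and Young's inequality absorbs $\|\nabla u\|$ and $\|(\omega/c)u\|$ into the left-hand side, leaving $\|f\|^2$ with a constant depending only on the parameters listed.

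The main technical obstacle I anticipate is a regularity issue: the weak solution $u\in H^1(\Omega)$ may not have $\mathbf{x}\cdot\nabla u$ in $H^1(\Omega)$, so $v=\mathbf{x}\cdot\nabla u$ is not a priori an admissible test function, and the Rellich integration-by-parts formulas I used require more regularity than available. The standard remedy is to first prove the identity for a dense class (say $H^2$) and pass to the limit; on Lipschitz star-shaped domains this requires either local interior $H^2$-regularity together with a careful boundary analysis, or the introduction of a mollification/approximation argument as in \cite{MoMuSp:19}-type treatments -- essentially an approximation by a smooth sub-domain exhausting $\Omega$, combined with uniform bounds from the integral identity. A secondary obstacle is bookkeeping of the explicit constant $C_{\mathrm{stab}}$ in terms of $\omega_0,c_{\min},c_{\max},R,\gamma,d,\theta$, which requires tracking how $\theta$ and $c_{\max}/c_{\min}$ enter the Young's inequality absorption; the continuous dependence claim then follows by inspecting the resulting closed-form expression.
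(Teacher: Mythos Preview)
Your overall strategy---test with $v=u$ for the real/imaginary part estimates, test with $v=\mathbf{x}\cdot\nabla u$ for the Rellich identity, use \eqref{eq:condc} for the sign of the variable-coefficient term, and justify via density in $V(\Omega)$---is exactly the paper's. But your handling of the $(2-d)\|\nabla u\|^2$ term contains a genuine error.

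You propose the Morawetz multiplier $v=\mathbf{x}\cdot\nabla u+\tfrac{d-1}{2}u$, i.e.\ adding $(d-1)$ times the real-part identity $\|\nabla u\|^2-\|(\omega/c)u\|^2=\Re\int f\bar u$ to the Rellich identity. This makes the $\|\nabla u\|^2$ coefficient $(2-d)+(d-1)=1>0$, but the $(\omega/c)^2|u|^2$ coefficient becomes $d-(d-1)-2\,\mathbf{x}\cdot\nabla c/c = 1-2\,\mathbf{x}\cdot\nabla c/c$, and under \eqref{eq:condc} this is only $\geq 2\theta-1$, which is \emph{negative} for $\theta<\tfrac12$. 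Your ``$2\theta-1+1$'' step is not justified. The fix is simple: use the coefficient $(d-2)/2$ (or anything in $((d-2)/2,(d-2+2\theta)/2)$) in the Morawetz multiplier; then the $\|\nabla u\|^2$ coefficient is $\geq 0$ and the $|u|^2$ coefficient is $\geq 2\theta$, and you recover the gradient bound afterwards from the real-part identity.

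The paper does essentially the equivalent thing but phrases it differently: it moves $(d-2)\|\nabla u\|^2$ to the right-hand side and substitutes the already-proved estimate $\|\nabla u\|^2\le(1+\varepsilon_1'/2)\|(\omega/c)u\|^2 + C\|f\|^2$ (obtained from the real part of $v=u$ with a Young parameter $\varepsilon_1'$). The net $(\omega/c)^2|u|^2$ coefficient is then $\geq 2\theta-(d-2)\varepsilon_1'/2$, made positive by choosing $\varepsilon_1'$ small. This is algebraically the same mechanism, but the parameter $\varepsilon_1'$ makes the absorption transparent and avoids the coefficient pitfall you hit.

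On the boundary terms: the paper does not split $\nabla u$ into normal/tangential parts or invoke any surface divergence. It simply keeps $\int_\Gamma(\mathbf{x}\cdot\mathbf{n})|\nabla u|^2\geq\gamma\|\nabla u\|_\Gamma^2$ on the left, bounds the cross term $2\Re(\mathrm{i}((\omega/c)u,\mathbf{x}\cdot\nabla u)_\Gamma)$ by $R(\varepsilon'\|(\omega/c)u\|_\Gamma^2+(1/\varepsilon')\|\nabla u\|_\Gamma^2)$, and chooses $\varepsilon'=2R/\gamma$ to absorb the second piece. Your tangential/normal route also works (the ``surface divergence'' is unnecessary---Cauchy--Schwarz on the cross term suffices), but the paper's version is shorter and sidesteps any subtlety about tangential calculus on a merely Lipschitz $\Gamma$. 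Your identification of the regularity issue and its resolution by density of $C^\infty(\overline\Omega)$ in $V(\Omega)$ matches the paper.
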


\begin{remark}\label{rem:Adjoint}

    
\begin{itemize}
\item[(i)] 
  The same estimate holds for the adjoint problem where the
sign of the boundary integral term in the definition of $B_{a,c} $ in
\eqref{eq:weak} is changed from negative to positive.
{\igg \item[(ii)] 
The class of coefficients defined by \eqref{eq:condc} 
includes examples in which   $c$  is  far from constant. 
 If,  for example,  
the domain $\Omega$ is a sphere centred on the origin 
and $c$ is a radial function which decays as we move out from the origin,   then \eqref{eq:condc} is always satisfied, no matter how large $c$ is at the origin or how fast it decays. The recent papers \cite{FeLiLo:14,Wub} contain a more refined convergence analysis  than that given  here, but they  make the  stricter  assumption  that the wave speed is a small perturbation of a constant and, moreover,  the perturbation is required to decay with $\mathcal{O}(\omega^{-1})$ as $\omega \rightarrow \infty$. }
  
\end{itemize}

\end{remark}

%

\proof
First we note that the stated assumptions allow us to apply
Corollary \ref{Cora=1} which implies the existence and uniqueness of the
solution $u$. In the following, the parameters $\varepsilon,\varepsilon
^{\prime}$, $\varepsilon_{j},\varepsilon_{j}^{\prime},\ldots$ denote positive
real numbers, initially arbitrary but eventually fixed. Let $u$ denote the
solution of (\ref{eq:weak}). First, we choose $v=u$ in equation (\ref{eq:weak}%
) and consider the real part of the equation. 
This leads
to%
\[
\left\Vert \nabla u\right\Vert ^{2}\leq\left\Vert \frac{\omega}{c}u\right\Vert
^{2}+\frac{\varepsilon_{1}}{2}\left\Vert u\right\Vert ^{2}+\frac
{1}{2\varepsilon_{1}}\left\Vert f\right\Vert ^{2}.
\]
The choice $\varepsilon_{1}=\varepsilon_{1}^{\prime}\frac{\omega^{2}}{c_{\max
}^{2}}$ yields
\begin{equation}
\left\Vert \nabla u\right\Vert ^{2}\leq\left(  1+\frac{\varepsilon_{1}%
^{\prime}}{2}\right)  \left\Vert \frac{\omega}{c}u\right\Vert ^{2}%
+\frac{c_{\max}^{2}}{2\varepsilon_{1}^{\prime}\omega^{2}}\left\Vert
f\right\Vert ^{2}. \label{gradest1}%
\end{equation}
{Recalling $\beta=1/c$ and $g=0$ and using the imaginary part of equation
(\ref{eq:weak}) with $v=-u$} we deduce%
\[
\left(  \frac{\omega}{c}u,u\right)  _{\Gamma}\leq\frac{1}{2}\left(
\varepsilon_{2}\omega\left\Vert u\right\Vert ^{2}+\frac{1}{\varepsilon
_{2}\omega}\left\Vert f\right\Vert ^{2}\right)  \ ,
\]
from which it follows that
\begin{equation}
\left\Vert \frac{\omega}{c}u\right\Vert _{\Gamma}^{2}\leq\frac{\omega}%
{c_{\min}}\left(  \frac{\omega}{c}u,u\right)  _{\Gamma}\ \leq\ \frac
{1}{2c_{\min}}\left(  \varepsilon_{2}c_{\max}^{2}\left\Vert \frac{\omega}%
{c}u\right\Vert ^{2}+\frac{1}{\varepsilon_{2}}\left\Vert f\right\Vert
^{2}\right)  . \label{uest1}%
\end{equation}


{\color{black} Next, we choose $v$ as in \eqref{rellich}. Then it follows by
elementary vector calculus that
\begin{align}
&  -2\Re\int_{\Omega}\left(  \Delta u+\left(  \frac{\omega}{c}\right)
^{2}u\right)  \overline{v}\nonumber\\
&  \mbox{\hspace{1in}}\ =\ (2-d)\Vert\nabla u\Vert^{2}+\int_{\Omega}%
\nabla\cdot\left(  \left(  \frac{\omega}{c}\right)  ^{2}\mathbf{x}\right)
|u|^{2}\nonumber\\
&  \mbox{\hspace{1.1in}}+\int_{\Gamma}(\mathbf{x}\cdot\mathbf{n})\left(
|\nabla u|^{2}-\left(  \frac{\omega}{c}\right)  ^{2}|u|^{2}\right)
-2\Re\left(  \mathrm{i}\int_{\Gamma}\left(  \frac{\omega}{c}\right)
u\overline{v}\right)  . \label{eq:mult}%
\end{align}
(More precisely \eqref{eq:mult} is first proved for arbitrary $u\in C^{\infty
}(\overline{\Omega})$, and with $v$ as in \eqref{rellich} by elementary vector
calculus.
When $u$ is the actual solution to \eqref{eq:Helm}, \eqref{eq:ImpBC} then
\[
u\in V(\Omega):=\{v\in H^{1}(\Omega):\Delta u\in L^{2}(\Omega),\ \partial
u/\partial n\in L^{2}(\Gamma)\ ,u|_{\Gamma}\in H^{1}(\Gamma)\}.
\]
The proof of \eqref{eq:mult} is completed by observing that $C^{\infty
}(\overline{\Omega})$ is dense in $V(\Omega)$ (see, e.g. \cite{GaGrSp:15} for
an analogous argument).
Then, rearranging \eqref{eq:mult} and recalling \eqref{eq:Helm}, leads to
\begin{align}
&  \int_{\Omega}\nabla\cdot \left(  \left(  \frac{\omega}{c}\right)  ^{2}%
\mathbf{x}\right)  \left\vert u\right\vert ^{2}+\int_{\Gamma}\left(
\mathbf{x}.\mathbf{n}\right)  \left\vert \nabla u\right\vert ^{2}\nonumber\\
&  \mbox{\hspace{0.5in}}=\ \int_{\Gamma}\left(  \mathbf{x}.\mathbf{n}\right)
\left(  \frac{\omega}{c}\right)  ^{2}\left\vert u\right\vert ^{2}%
{\color{black}+}2\Re\left(  \mathrm{i}\left(  \left(  \frac{\omega}{c}\right)
u,v\right)  _{\Gamma}\right)  \ +\ 2\Re\left(  f,v\right)  +\left(
d-2\right)  \left\Vert \nabla u\right\Vert ^{2}\nonumber\\
&  \mbox{\hspace{0.5in}}\leq\ R\left\Vert \frac{\omega}{c}u\right\Vert
_{\Gamma}^{2}\ +\ 2\left\Vert \frac{\omega}{c}u\right\Vert _{\Gamma}\left\Vert
v\right\Vert _{\Gamma}\ +\ \left(  \frac{1}{\varepsilon}\left\Vert
f\right\Vert ^{2}+\varepsilon\left\Vert v\right\Vert ^{2}\right)
+(d-2)\Vert\nabla u\Vert^{2}.\nonumber
\end{align}
Since $\left\Vert v\right\Vert _{\Gamma}\leq R\left\Vert \nabla u\right\Vert
_{\Gamma}$, we obtain%
\begin{align*}
&  \int_{\Omega}\nabla\cdot \left(  \left(  \frac{\omega}{c}\right)  ^{2}%
\mathbf{x}\right)  \left\vert u\right\vert ^{2}+\int_{\Gamma}\left(
\mathbf{x}.\mathbf{n}\right)  \left\vert \nabla u\right\vert ^{2}\\
&  \mbox{\hspace{0.5in}}\leq\ R\left\Vert \frac{\omega}{c}u\right\Vert
_{\Gamma}^{2}+R\left(  \varepsilon^{\prime}\left\Vert \frac{\omega}%
{c}u\right\Vert _{\Gamma}^{2}+\frac{1}{\varepsilon^{\prime}}\left\Vert \nabla
u\right\Vert _{\Gamma}^{2}\right)  \ +\ \left(  \left(  \varepsilon
R^{2}+d-2\right)  \left\Vert \nabla u\right\Vert ^{2}+\frac{1}{\varepsilon
}\left\Vert f\right\Vert ^{2}\right)  .
\end{align*}
Now recall the assumption of star-shapedness (\ref{Astarshape}) and choose
$\varepsilon^{\prime}=2R/\gamma$ to obtain {%
\begin{align}
\int_{\Omega}\nabla\cdot \left(  \left(  \frac{\omega}{c}\right)  ^{2}%
\mathbf{x}\right)  \left\vert u\right\vert ^{2}+\frac{\gamma}{2}\left\Vert
\nabla u\right\Vert _{\Gamma}^{2}  &  \ \leq\ R\left(  1+\frac{2R}{\gamma
}\right)  \left\Vert \frac{\omega}{c}u\right\Vert _{\Gamma}^{2}\nonumber\\
&  \ +\ \left(  \left(  \varepsilon R^{2}+d-2\right)  \left\Vert \nabla
u\right\Vert ^{2}+\frac{1}{\varepsilon}\left\Vert f\right\Vert ^{2}\right)  .
\label{eq:3rdlast}%
\end{align}
}We now employ (\ref{gradest1}), (\ref{uest1}) to estimate the first two terms
on the right-hand side of \eqref{eq:3rdlast}. After a rearrangement this
gives
\begin{align}
&  \int_{\Omega}\nabla\cdot \left(  \left(  \frac{\omega}{c}\right)  ^{2}%
\mathbf{x}\right)  \left\vert u\right\vert ^{2}\ +\ \frac{\gamma}{2}\left\Vert
\nabla u\right\Vert _{\Gamma}^{2}\ \leq\ \delta\left\Vert \frac{\omega}%
{c}u\right\Vert ^{2}\nonumber\\
&  \mbox{\hspace{1in}}+\left(  \frac{R}{2c_{\min}}\left(  1+\frac{2R}{\gamma
}\right)  \frac{1}{\varepsilon_{2}}+\left(  \varepsilon R^{2}+d-2\right)
\frac{c_{\max}^{2}}{2\varepsilon_{1}^{\prime}\omega^{2}}+\frac{1}{\varepsilon
}\right)  \left\Vert f\right\Vert ^{2} \label{2ndlast}%
\end{align}
with%
\begin{equation}
\delta-(d-2)\ =\ \frac{R}{2c_{\min}}\left(  1+\frac{2R}{\gamma}\right)
\varepsilon_{2}c_{\max}^{2}+\varepsilon R^{2}\left(  1+\frac{\varepsilon
_{1}^{\prime}}{2}\right)  +(d-2)\frac{\varepsilon_{1}^{\prime}}{2}.
\label{eq:besmall}%
\end{equation}
Note that, using our assumption \eqref{eq:condc}
\[
\nabla\cdot \left(  \left(  \frac{\omega}{c}\right)  ^{2}\mathbf{x}\right)
=\left(  \frac{\omega}{c}\right)  ^{2}\left(  d-2\frac{\mathbf{x}.\nabla c}%
{c}\right)  \geq((d-2)+2\theta)\left(  \frac{\omega}{c}\right)  ^{2}\ .
\]
}

Hence, by making the right-hand side of \eqref{eq:besmall} small enough we can
\textquotedblleft absorb\textquotedblright\ the term $\delta\left\Vert
\frac{\omega}{c}u\right\Vert ^{2}$ on the right-hand side of (\ref{2ndlast})
into the left-hand side.
We do this by first choosing ${\varepsilon_{1}^{\prime}}=\frac{2\theta}%
{3\max\{d-2,1/2\}}$, which ensures{ $(d-2)\varepsilon_{1}^{\prime}/2\leq
\theta/3$}. Then we choose $\varepsilon,\ \varepsilon_{2}$ so that
\[
\varepsilon R^{2}\left(  1+\frac{\varepsilon_{1}^{\prime}}{2}\right)
\ =\ \frac{\theta}{3}\ =\ \frac{R}{2c_{\min}}\left(  1+\frac{2R}{\gamma
}\right)  \varepsilon_{2}c_{\max}^{2}.
\]
The right hand side of \eqref{eq:besmall} is then bounded{ from above} by
$\theta$, and we have derived the estimate%
\[
{\theta}\left\Vert \frac{\omega}{c}u\right\Vert ^{2}+\frac{\gamma}%
{2}\left\Vert \nabla u\right\Vert _{\Gamma}^{2}\leq\left(  \frac{R}{2c_{\min}%
}\left(  1+\frac{2R}{\gamma}\right)  \frac{1}{\varepsilon_{2}}+\left(
\varepsilon R^{2}+d-2\right)  \frac{c_{\max}^{2}}{2\varepsilon_{1}^{\prime
}\omega^{2}}+\frac{1}{\varepsilon}\right)  \left\Vert f\right\Vert ^{2}.
\]
This leads to the final weighted $L^{2}$ estimate (\ref{L2estimate}). The
estimate of $\left\Vert \nabla u\right\Vert $ follows from this via
(\ref{gradest1}).
\endproof

\begin{remark}
\label{RemCommMainTheo}\quad

\begin{enumerate}
\item[{\igg (i)}] The formulation and proof of Theorem \ref{MainTheoremdl2} for $d=1$
is analogous. We discuss it in some detail for a broader class of coefficients
in \S \ref{Sec1DCase}.

\item[{\igg (ii)}]
Conditions which ensure the stability estimate
\eqref{eq:apriori} (with $C_{\mathrm{stab}}$ independent of $\omega$) for the problem \eqref{eq:Helm}, {\imag with the scalar function 
 $a$ generalised to a positive definite matrix $A$}  may be
written
\begin{equation}
\frac{\mathbf{x}\cdot \nabla c}{c}\ \leq\ \frac{1}{2}-\theta,\quad(\mathbf{x}%
\cdot \nabla)A\ \leq\ A-\theta^{\prime}I,\label{eq:condcA}%
\end{equation}
with $\theta,\theta^{\prime}$ required to be positive. The first condition in
(\ref{eq:condcA}) was introduced in \cite{PerthameVega} for a Helmholtz
equation of the form $\Delta u+n\left(  x\right)  \omega^{2}u=f$ with variable
$n$. By similar multiplier techniques as in \cite{PerthameVega} these results
can be extended to variable $A$ under relatively restrictive conditions on $A$
(see \cite{Brownetal}). In \cite{GrPeSp:17} the condition on $A$ was
formulated in the form (\ref{eq:condcA}). The second inequality should be
understood in the sense of sequilinear forms with the operator $(\mathbf{x}%
\cdot\nabla)$ being applied componentwise to the matrix $A$. In
\cite{GrPeSp:17} it is shown that these conditions imply frequency-independent
stability, not only for the interior impedance problem considered here but
also for Dirichlet scattering problems on infinite and artificially truncated
exterior domains. (Note that when $A$ and $c$ both vary the condition on $c$
in \eqref{eq:condcA} is slightly stronger than that in \eqref{eq:condc}.)

\item[{\igg (iii)}] The main purpose of presenting the proof of Theorem
\ref{MainTheoremdl2} here is to {\igg emphasise}  how far one can get by using the
\textquotedblleft Rellich\textquotedblright\ test function \eqref{rellich}.
The resulting
stability estimates require stronger smoothness requirements on the
coefficients than  those needed for the well-posedness in Theorem \ref{thm:wp}.
Moreover the condition \eqref{eq:condc}, while allowing $c$ to decay arbitrary
quickly in the radial direction, effectively rules out highly oscillatory wave
speeds. This is the starting point for \S \ref{Sec1DCase}, which concerns
stability of problem \eqref{eq:Helm} when \eqref{eq:condc} is not satisfied.
In this case the Rellich test function \eqref{rellich} is not sufficient and
we need to use other \textquotedblleft coefficient dependent\textquotedblright%
\ test functions.


\end{enumerate}
\end{remark}

\section{Finite element error estimates for heterogeneous problems
\label{SecGalDisc}}

{In this section we work under the assumptions as
stated in Corollary \ref{Cora=1}}. {We prove estimates for the \textit{minimal
resolution condition}} and the Galerkin error for {conforming finite element
approximation of \eqref{eq:weak} which are explicit in }$\omega$, $a$, $c$,
$h$, and the stability constant $C_{\mathrm{stab}}${. In general the stability
constant depends also on the coefficients and the wave number. However, the
stronger Assumption \ref{Agend} allows us to apply Theorem
\ref{MainTheoremdl2} and Remark \ref{rem:Adjoint} so that the constant
}$C_{\mathrm{stab}}$ becomes independent of the wavenumber{. }

{Our first results concern the abstract Galerkin method: for a general finite
dimensional subspace $S\subset\mathcal{H}$, 
\begin{equation}
{\igg \text{seek} \quad  u_{S}\in S \quad  \text{such that} \quad}   B_{a,c}\left(  u_{S},v\right)  =F\left(  v\right)  ,\qquad\text{for all}\quad
v\in S. \label{GalDisc}%
\end{equation}
For the homogeneous case ($a=c=1$) existence and
uniqueness of the Galerkin solution  follow by the \textquotedblleft Schatz argument\textquotedblright%
\ (see \cite{Schatz74}); the notion of \textit{adjoint approximability} has
been introduced in \cite{Sauter2005}, \cite{BanjaiSauterI06},
\cite{MelenkSauterMathComp} and has been shown to play a fundamental role in
the theory of \eqref{GalDisc}. Here we generalise this to the heterogeneous
case and refer to \cite{doerfler_sauter} for a similar reasoning in the
context of a posteriori estimates.

{\igg \begin{definition}[Adjoint Approximability]  
Let $T_{a,c}^{\ast}$ denote the solution
operator for the adjoint problem with homogeneous impedance data, that is,  for
${\lambda}${$\in L^{2}(\Omega)$, $z=T_{a,c}^{\ast}$}${\lambda}${ is defined
to be the solution to the adjoint equation
\begin{equation}
B_{a,c}(v,z)=(v,\lambda)\quad\text{for all}\quad v\in H^{1}(\Omega)\ .
\label{adjoint_problem}%
\end{equation}
The well-posedness of this problem is ensured by Corollary \ref{Cora=1}.
Then we define the heterogeneous adjoint approximability constant
$\sigma_{a,c}^{\ast}(S)$ by
\begin{equation}
{\sigma_{a,c}^{\ast}}\left(  S\right)  :=\sup_{\varphi\in L^{2}(\Omega
)\backslash\left\{  0\right\}  }\dfrac{\inf_{v\in S}\left\Vert T_{a,c}^{\ast
}\left(  \left(  \frac{\omega}{c}\right)  ^{2}\varphi\right)  -v\right\Vert
_{\mathcal{H},a,c}}{\left\Vert \frac{\omega}{c}\varphi\right\Vert }.
\label{defhetapproxconst}%
\end{equation}
}

\end{definition} }
Using this we have a result on Galerkin well-posedness and error estimates:

\begin{theorem}
[Discrete stability and convergence]\label{TheoStabDisc}
Suppose the assumptions of Corollary \ref{Cora=1} hold and
suppose
\begin{equation}
\sigma_{a,c}^{\ast}\left(  S\right)  \leq\frac{1}{2C_{a,c}} \ , 
\label{eq:abstract-cond-on-eta}%
\end{equation}
with the continuity constant $C_{a,c}$ as given in \eqref{Bcty}. Then the
discrete problem (\ref{GalDisc}) has a unique solution which satisfies the
error estimates:
\begin{align}
\left\Vert u-u_{S}\right\Vert _{\mathcal{H},a,c}  &  \ \leq\ 2C_{a,c}%
\inf_{v\in S}\left\Vert u-v\right\Vert _{\mathcal{H},a,c}%
,\label{eq:abstract-quasi-optimality-H}\\
\left\Vert \frac{\omega}{c}\left(  u-u_{S}\right)  \right\Vert _{L^{2}%
(\Omega)}  &  \ \leq\ 2C_{a,c}^{2}\sigma_{a,c}^{\ast}\left(  S\right)
\inf_{v\in S}\left\Vert u-v\right\Vert _{\mathcal{H},a,c}.
\label{eq:abstract-quasi-optimality-L2}%
\end{align}

\end{theorem}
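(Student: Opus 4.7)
The plan is to follow a Schatz-type argument, splitting the sesquilinear form as $B_{a,c} = b_1 + b_2 + b_\Gamma$ (as in the proof of Theorem \ref{thm:wp}), so that $\Re b_1(v,v) = \|v\|_{\mathcal{H},a,c}^2$, while $b_2$ and $b_\Gamma$ are the indefinite/imaginary perturbations. The idea is to control the indefinite piece via the duality-based adjoint approximability constant $\sigma^*_{a,c}(S)$, and absorb it into the coercive piece under the smallness condition \eqref{eq:abstract-cond-on-eta}. Throughout I write $e := u - u_S$ and use Galerkin orthogonality $B_{a,c}(e,v)=0$ for all $v\in S$ (whenever this makes sense — for uniqueness I will apply the same reasoning with $F=0$, so $e$ is replaced by a discrete solution of the homogeneous problem).

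First I would carry out the dual (Aubin–Nitsche) step. Given $e\in\mathcal{H}$, let $z = T_{a,c}^{\ast}\bigl((\omega/c)^{2} e\bigr)$, so that, by definition of $T_{a,c}^*$,
\[
B_{a,c}(e,z) \;=\; \bigl(e,(\omega/c)^{2} e\bigr) \;=\; \Bigl\|\tfrac{\omega}{c} e\Bigr\|^{2}.
\]
By Galerkin orthogonality, $B_{a,c}(e,z) = B_{a,c}(e, z-v_S)$ for any $v_S\in S$; combined with continuity \eqref{Bcty} and the definition \eqref{defhetapproxconst} of $\sigma^*_{a,c}(S)$, this yields
\[
\Bigl\|\tfrac{\omega}{c} e\Bigr\|^{2} \;\le\; C_{a,c}\,\|e\|_{\mathcal{H},a,c}\,\inf_{v_S\in S}\|z-v_S\|_{\mathcal{H},a,c} \;\le\; C_{a,c}\,\sigma^*_{a,c}(S)\,\|e\|_{\mathcal{H},a,c}\,\Bigl\|\tfrac{\omega}{c}e\Bigr\|,
\]
which, after cancelling a factor of $\|(\omega/c)e\|$, gives the key bound $\|(\omega/c)e\| \le C_{a,c}\sigma^{*}_{a,c}(S)\,\|e\|_{\mathcal{H},a,c}$.

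Next I use the identity $\Re B_{a,c}(e,e) = \|e\|_{\mathcal{H},a,c}^{2} - 2\|(\omega/c)e\|^{2}$, which follows directly from the splitting \eqref{Bstar} (since $b_\Gamma$ is purely imaginary). Combining with Galerkin orthogonality — write $B_{a,c}(e,e) = B_{a,c}(e, u-v_S)$ for arbitrary $v_S\in S$ and bound using continuity — and inserting the dual estimate above, I get
\[
\|e\|_{\mathcal{H},a,c}^{2} \;\le\; 2\bigl(C_{a,c}\sigma^{*}_{a,c}(S)\bigr)^{2}\,\|e\|_{\mathcal{H},a,c}^{2} \;+\; C_{a,c}\,\|e\|_{\mathcal{H},a,c}\,\|u-v_S\|_{\mathcal{H},a,c}.
\]
Under the assumption $\sigma^*_{a,c}(S)\le 1/(2C_{a,c})$, the first term on the right has coefficient $1/2$ and can be absorbed, which leaves \eqref{eq:abstract-quasi-optimality-H} after taking the infimum over $v_S$. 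Feeding this back into the dual bound immediately produces \eqref{eq:abstract-quasi-optimality-L2}.

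Finally, discrete well-posedness follows by the same pattern. Since $S$ is finite-dimensional it suffices to show uniqueness: if $u_S\in S$ satisfies $B_{a,c}(u_S,v)=0$ for all $v\in S$, apply the dual step with $e$ replaced by $u_S$ (now the Galerkin orthogonality is automatic since $B_{a,c}(u_S,v_S)=0$ for every $v_S\in S$) to obtain $\|(\omega/c)u_S\|\le C_{a,c}\sigma^{*}_{a,c}(S)\|u_S\|_{\mathcal{H},a,c}$, and then taking the real part of $B_{a,c}(u_S,u_S)=0$ gives $\|u_S\|_{\mathcal{H},a,c}^{2} = 2\|(\omega/c)u_S\|^{2} \le \tfrac{1}{2}\|u_S\|_{\mathcal{H},a,c}^{2}$, forcing $u_S=0$. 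The main conceptual point — and the only mildly delicate bookkeeping — is to track the exact power of $C_{a,c}\sigma^{*}_{a,c}(S)$ picked up in the duality step so that the threshold $1/(2C_{a,c})$ is tight enough to give the factor $\tfrac{1}{2}$ that enables the absorption; everything else is routine.
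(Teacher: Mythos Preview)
Your proposal is correct and follows essentially the same Schatz/Aubin--Nitsche argument as the paper: both first derive the duality bound $\|(\omega/c)e\|\le C_{a,c}\sigma_{a,c}^{\ast}(S)\|e\|_{\mathcal{H},a,c}$, then use $\|e\|_{\mathcal{H},a,c}^{2}=\Re B_{a,c}(e,e)+2\|(\omega/c)e\|^{2}$ together with Galerkin orthogonality and continuity to absorb under \eqref{eq:abstract-cond-on-eta}. Your explicit treatment of discrete uniqueness (via the homogeneous problem) is slightly more careful than the paper, which leaves it implicit in the quasi-optimality estimate, but otherwise the two proofs coincide.
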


%

\proof
We first estimate the $L^{2}$-error in terms of the $H^{1}$-error via the
Aubin-Nitsche technique. Let $e=u-u_{S}$, set $\psi:=T_{a,c}^{\ast}\left(
\left(  \frac{\omega}{c}\right)  ^{2}e\right)  $ and let $\psi_{S}\in S$
denote the best approximation to $\psi$ with respect to $\Vert\cdot
\Vert_{\mathcal{H},a,c}$. {\imag Then, using the definition of $T^\ast$, we have 
$$ \left\Vert \frac{\omega}{c} e \right\Vert^2 = B_{a,c}(e, \psi)$$ and then using}  Galerkin orthogonality and continuity,
we have \begin{align}
\left\Vert \frac{\omega}{c}e\right\Vert ^{2}  &  =B_{a,c}\left(
e,\psi\right)  \leq B_{a,c}\left(  e,\psi-\psi_{S}\right)  \leq C_{a,c}%
\,\left\Vert e\right\Vert _{\mathcal{H},a,c}\left\Vert \psi-\psi
_{S}\right\Vert _{\mathcal{H},a,c}\nonumber\\
&  \leq C_{a,c}\sigma_{a,c}^{\ast}\left(  S\right)  \left\Vert e\right\Vert
_{\mathcal{H},a,c}\left\Vert \frac{\omega}{c}e\right\Vert . \label{l2a3}%
\end{align}
To estimate the $\mathcal{H}$-norm of the error, note that for any $v_{S}\in
S$, we have, again by Galerkin orthogonality (and using (\ref{l2a3})),
\begin{align*}
\left\Vert e\right\Vert _{\mathcal{H},a,c}^{2}  &  =\mathfrak{R}\left(
B_{a,c}\left(  e,e\right)  \right)  +2\left\Vert \frac{\omega}{c}e\right\Vert
^{2}=\mathfrak{R}B_{a,c}\left(  e,u-v_{S}\right)  +2\left\Vert \frac{\omega
}{c}e\right\Vert ^{2}\\
&  \ {\leq}\ C_{a,c}\left\Vert e\right\Vert _{{\mathcal{H},a,c}}\left\Vert
u-v_{S}\right\Vert _{\mathcal{H},a,c}+2\left(  C_{a,c}\,\sigma_{a,c}^{\ast
}\left(  S\right)  \right)  ^{2}\left\Vert e\right\Vert _{\mathcal{H},a,c}%
^{2}.
\end{align*}
Then \eqref{eq:abstract-quasi-optimality-H} follows on application of
(\ref{eq:abstract-cond-on-eta}), and \eqref{eq:abstract-quasi-optimality-L2}
follows by combination of this with \eqref{l2a3}.
\endproof

For practical computations the space $S$ is typically chosen to be an
\textit{hp} finite element space. In this case the role of the
\textquotedblleft resolution condition\textquotedblright%
\ (\ref{eq:abstract-cond-on-eta}) has been studied in detail for Helmholtz
problems with constant coefficients in the sequence of papers
\cite{MelenkSauterMathComp}, \cite{mm_stas_helm2}, \cite{MPS13}. In Theorem
\ref{thm:hetconv} below we give the first extension of this theory to the
heterogeneous case. {\imag To reduce technicalities we restrict the argument to lowest order conforming finite elements.}

 In the argument below we will
make use of the following Poisson problem: given $f\in L^{2}\left(
  \Omega\right)  $ and $g\in H^{1/2}\left(  \Gamma\right)  $,
\begin{equation}
{\igg \text{seek}  \quad u\in
{H^{1}(\Omega)}, \quad \text{
such that} } \quad 
 \left(  \nabla u,\nabla v\right)  +\left(  u,v\right)  =\left(  f,v\right)
+\left(  g,v\right)  _{\Gamma}\qquad\forall v\in{H}^{1}(\Omega).
\label{eq:Poisson}%
\end{equation}


\begin{proposition}
\label{prop:Poisson}Let the assumptions of Corollary \ref{Cora=1} be satisfied
and $\Omega$ be a bounded convex Lipschitz domain. For $g=0$ in
(\ref{eq:Poisson}), the Poisson problem \eqref{eq:Poisson} is $H^{2}$ regular
i.e. there is a constant $C_{\mathrm{reg}}$ such that
\[
\left\Vert u\right\Vert _{H^{2}\left(  \Omega\right)  }\ \leq\ C_{\mathrm{reg}%
}{\left\Vert f\right\Vert }\ .
\]
{\imagg Suppose $d=2$ and let $\Omega$ be a bounded convex polygon.   
For  $g\in H^{1/2}\left(  \Gamma\right)  $ we have} %
\begin{equation}
\left\Vert u\right\Vert _{H^{2}\left(  \Omega\right)  }\ \leq\ C_{\mathrm{reg}%
}\left(  {\left\Vert f\right\Vert +\left\Vert g\right\Vert _{H^{1/2}\left(
\Gamma\right)  }}\right)  \ . \label{H2regularity}%
\end{equation}
\end{proposition}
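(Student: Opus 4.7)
The plan is to split the proof into two parts: first establish the homogeneous case ($g=0$) in general dimension on a convex Lipschitz domain, and then reduce the inhomogeneous 2D case to the homogeneous one by a lifting argument. The sesquilinear form on the left-hand side of \eqref{eq:Poisson} is coercive and continuous on $H^{1}(\Omega)$, so Lax--Milgram already gives a unique weak solution $u \in H^{1}(\Omega)$ with $\|u\|_{H^{1}(\Omega)} \lesssim \|f\| + \|g\|_{H^{1/2}(\Gamma)}$. The work is entirely in upgrading this to an $H^{2}$ estimate.

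For the homogeneous case $g=0$, the weak form \eqref{eq:Poisson} is precisely the variational formulation of the Neumann problem $-\Delta u + u = f$ in $\Omega$, $\partial u/\partial n = 0$ on $\Gamma$. On a bounded convex (hence Lipschitz) domain, the classical $H^{2}$-regularity theorem of Grisvard \cite{Grisvard85} (the Neumann analogue of the well-known Dirichlet result in Chapter~3 there; see in particular Theorem~3.2.1.3 and the discussion of the Neumann problem in Sections~3.2.3--3.2.5) yields $u \in H^{2}(\Omega)$ together with the stated estimate $\|u\|_{H^{2}(\Omega)} \leq \Creg \|f\|$. I would simply cite this.

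For the inhomogeneous case in 2D on a convex polygon, the strategy is to absorb $g$ into the right-hand side. Concretely, I would produce a lifting $w \in H^{2}(\Omega)$ satisfying $\partial w/\partial n = g$ on $\Gamma$ and
\[
\|w\|_{H^{2}(\Omega)} \leq C \|g\|_{H^{1/2}(\Gamma)}.
\]
On a convex polygon such a lifting exists: on each (straight) edge the outward normal is constant, so one can work edge by edge using a tubular neighbourhood and the standard right inverse of the normal-trace operator $H^{2}(\mathbb{R}^{2}_{+}) \to H^{1/2}(\mathbb{R})$, then patch with a partition of unity subordinate to the edges (choosing each cut-off to vanish near the two adjacent vertices so as to remove the corner compatibility issue). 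Convexity of the polygon guarantees that the neighbourhoods can be chosen inside $\Omega$. Setting $\tilde{u} := u - w$, the function $\tilde{u} \in H^{1}(\Omega)$ satisfies the weak Neumann problem
\[
(\nabla \tilde{u}, \nabla v) + (\tilde{u}, v) \;=\; (f + \Delta w - w, v) \qquad \forall v \in H^{1}(\Omega),
\]
with $f + \Delta w - w \in L^{2}(\Omega)$ and homogeneous Neumann data. Applying the first part of the proposition to $\tilde{u}$ and using the triangle inequality $\|u\|_{H^{2}} \leq \|\tilde{u}\|_{H^{2}} + \|w\|_{H^{2}}$ together with the bound on $w$ gives \eqref{H2regularity}.

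The main obstacle, and the reason the statement restricts to 2D convex polygons, is the construction of the $H^{2}$-lifting of $g$: on a general convex Lipschitz domain the normal vector is only $L^{\infty}$, and the natural lifting of Neumann data lands in $H^{1}(\Omega)$ rather than $H^{2}(\Omega)$; the flat-edge structure of a polygon is what makes the edge-by-edge construction produce a genuine $H^{2}$ function. Everything else is soft: Lax--Milgram for existence, Grisvard for the homogeneous $H^{2}$ estimate, and an algebraic rearrangement to combine the two.
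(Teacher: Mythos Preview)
Your proposal is correct and follows essentially the same route as the paper: cite Grisvard \cite[Theorem 3.2.1.3]{Grisvard85} for the homogeneous Neumann case on a convex domain, and for the inhomogeneous 2D polygonal case reduce to the homogeneous one via an $H^{2}$ lifting of the normal trace. The paper's proof is in fact terser than yours --- it simply refers to \cite[Lemma A.1]{mm_stas_helm2} or \cite[Lemma 2.12]{GaGrSp:15} for the lifting construction rather than sketching the edge-by-edge partition-of-unity argument.
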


\begin{proof}
For $g=0$ this is \cite[Theorem 3.2.1.3]{Grisvard85}. For inhomogeneous
Neumann conditions one can use a lifting for the normal trace to transform the
problem to a problem with homogeneous Neumann conditions (see \cite[Lemma
A.1]{mm_stas_helm2} or \cite[Lemma 2.12]{GaGrSp:15} for $d=2$).
\end{proof}

{\igg Throughout the rest of this section we use the following notation. 
 \begin{definition}  
 $\mathcal{T}_{h}$ will denote   a shape-regular family of conforming
simplicial
meshes on $\Omega$ with mesh diameter $h$,   and $S_{h}$
will denote the corresponding space of continuous affine  functions and $I_h$ the usual  nodal interpolation operator .
\end{definition} 
}
We recall that $I_{h}:C\left(  \overline{\Omega}\right)
\rightarrow S_{h}$ is well-defined for functions in $H^{2}\left(
\Omega\right)  $ (for $d=1,2,3$) and satisfies, for some constant
$C_{\mathrm{int}}$,
\begin{equation}
\left\Vert v-I_{h}v\right\Vert +h\left\Vert \nabla\left(  v-I_{h}v\right)
\right\Vert \leq C_{\operatorname*{int}}h^{2}\left\Vert v\right\Vert
_{H^{2}\left(  \Omega\right)  }\qquad\forall v\in H^{2}\left(  \Omega\right)
. \label{eq:est_interp}%
\end{equation}



\begin{theorem}
\label{thm:hetconv}(i) Let the assumptions of Corollary \ref{Cora=1} be
satisfied {\imag and assume in addition that $c \in C^{0,1}(\Omega)$.}
Assume also  that {\imag the solution} of the  Poisson problem (\ref{eq:Poisson}) is $H^{2}$
regular {\imag and satisfies the estimate}  (\ref{H2regularity}). Then,%
\begin{equation}
{\sigma_{a,c}^{\ast}}\left(  S\right)  \leq K\left(  \sqrt{\frac{a_{\max}%
}{a_{\min}}}+\frac{\omega}{\sqrt{a_{\min}}c_{\min}}h\right)  \left(
\frac{c_{\min}}{\omega_{0}}+C_{\mathrm{stab}}\right)  \left(  \frac{\omega
}{\sqrt{a_{\min}}c_{\min}}\right)  ^{2}h \label{sigmastar_est}%
\end{equation}
with%
\begin{equation}
K:=K\left(  a,c,\omega_{0},\Omega\right)  :=C_{\operatorname*{reg}%
}C_{\operatorname*{int}}\sqrt{a_{\min}}\left(  C_{0}+C_{0}^{\prime}{\imag \sqrt{\amin}}  %
+\frac{\kappa_{a}}{\omega_{0}}\sqrt{a_{\min}}c_{\min}\right)  .
\label{defKacOmega}%
\end{equation}
The constants $C_{0}$ and $C_{0}^{\prime}$ are defined in (\ref{def_c0}) and
(\ref{defC_0prime}).

(ii) If, {\imag in addition,} the assumptions of Theorem \ref{MainTheoremdl2} are satisfied,  then
$C_{\mathrm{stab}}$ is independent of $\omega$ and the estimate {\imag \eqref{sigmastar_est} is}   explicit with
respect to the coefficients $c,\omega,h$.

\end{theorem}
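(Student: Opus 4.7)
\medskip

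\noindent\textbf{Proof plan for Theorem \ref{thm:hetconv}.}
The overall strategy is: given $\varphi\in L^{2}(\Omega)\setminus\{0\}$, set $\psi:=(\omega/c)^{2}\varphi$, let $z:=T_{a,c}^{\ast}\psi$, and choose $I_{h}z$ as the approximant of $z$ in $S_{h}$. Then
$$ \sigma^{*}_{a,c}(S_h) \ \leq\ \sup_{\varphi} \frac{\|z - I_h z\|_{\mathcal{H},a,c}}{\|(\omega/c)\varphi\|},$$
so it suffices to control this ratio. The two inputs we need are (a) the $\mathcal{H},a,c$-weighted interpolation estimate derived from \eqref{eq:est_interp}, and (b) an $H^{2}$ regularity bound for $z$, which I will obtain by rewriting the adjoint PDE as a Poisson problem and invoking Proposition \ref{prop:Poisson}; finally the stability estimate of Corollary \ref{Cora=1} converts the $H^{2}$ bound into one in terms of $\|(\omega/c)\varphi\|$.

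For step (b), I would use $a\in C^{0,1}(\overline{\Omega})$ (so $\nabla a\in L^{\infty}$ by Rademacher) to rewrite the strong form of the adjoint equation $-\nabla\cdot(a\nabla z)-(\omega/c)^{2}z=\psi$ as
$$ -\Delta z + z \ =\ z + a^{-1}\bigl(\nabla a\cdot\nabla z + (\omega/c)^{2}z + \psi\bigr)\ =:\ \widetilde f, $$
with the Neumann datum $\partial z/\partial n = -\mathrm{i}\omega\beta z/a =: \widetilde g$ coming from the adjoint impedance condition (the sign flip is harmless). Proposition \ref{prop:Poisson} gives $\|z\|_{H^{2}}\leq C_{\mathrm{reg}}(\|\widetilde f\|+\|\widetilde g\|_{H^{1/2}(\Gamma)})$. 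I would bound $\|\widetilde f\|$ term by term using the $L^{\infty}$ bounds on $a,1/a,\nabla a,1/c$ and the inequalities $\|\nabla z\|\leq a_{\min}^{-1/2}\|z\|_{\mathcal{H},a,c}$ and $\|z\|\leq (c_{\max}/\omega_{0})\|z\|_{\mathcal{H},a,c}$; and bound $\|\widetilde g\|_{H^{1/2}(\Gamma)}$ by writing $\widetilde g = -\mathrm{i}\omega (\beta/a)z$ and using the trace theorem together with the multiplier bound of $\beta/a=1/(c\sqrt{a})$ on $H^{1/2}(\Gamma)$ (this is where the smoothness of $c$ and a trace constant $\kappa_{a}$ enter, giving rise to the constants $C_{0},C_{0}^{\prime}$ in \eqref{defKacOmega}). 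Combining with the stability estimate $\|z\|_{\mathcal{H},a,c}\leq C_{\mathrm{stab}}\|\psi\|$ and the identity $\|\psi\|\leq (\omega/c_{\min})\|(\omega/c)\varphi\|$ then gives a bound of the form
$$\|z\|_{H^{2}} \ \leq\ C_{\mathrm{reg}}\Bigl(C_{0}+C_{0}^{\prime}\sqrt{\amin}+\tfrac{\kappa_{a}}{\omega_{0}}\sqrt{a_{\min}}c_{\min}\Bigr)\,\Bigl(\tfrac{c_{\min}}{\omega_{0}}+C_{\mathrm{stab}}\Bigr)\,B^{2}\,\|(\omega/c)\varphi\|,$$
where $B:=\omega/(\sqrt{a_{\min}}c_{\min})$ and the additive $c_{\min}/\omega_{0}$ absorbs the contributions that do not see $C_{\mathrm{stab}}$ directly (namely the terms in $\|\widetilde f\|$ and $\|\widetilde g\|_{H^{1/2}}$ that are controlled purely by $\|z\|_{\mathcal{H},a,c}$ without a factor of $\omega$).

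For step (a), writing $A:=\sqrt{a_{\max}/a_{\min}}$, the estimate \eqref{eq:est_interp} combined with $\|z-I_{h}z\|_{\mathcal{H},a,c}^{2}\leq a_{\max}\|\nabla(z-I_{h}z)\|^{2}+(\omega/c_{\min})^{2}\|z-I_{h}z\|^{2}$ gives
$$\|z-I_{h}z\|_{\mathcal{H},a,c}\ \leq\ C_{\mathrm{int}}\,h\,\sqrt{a_{\min}}\,(A+Bh)\,\|z\|_{H^{2}}.$$
Multiplying this by the $H^{2}$ bound from the previous paragraph, dividing by $\|(\omega/c)\varphi\|$, and taking the supremum over $\varphi$ yields exactly \eqref{sigmastar_est}, with $K$ as in \eqref{defKacOmega}. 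Part (ii) is then immediate: under the hypotheses of Theorem \ref{MainTheoremdl2}, Remark \ref{rem:Adjoint}(i) shows that the adjoint problem enjoys the same $\omega$-independent bound, so $C_{\mathrm{stab}}$ may be taken independent of $\omega$ in \eqref{Cstabintro}, and the only remaining $\omega$ dependence in \eqref{sigmastar_est} is the explicit one.

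The main obstacle is the bookkeeping of $a,c,\omega$ dependencies across the three coupled estimates, and in particular treating the inhomogeneous Neumann datum $\widetilde g$ in the $H^{1/2}(\Gamma)$ norm required by Proposition \ref{prop:Poisson}; this forces a multiplier argument for $\beta/a$ together with a standard trace inequality, and these are what fix the form of $K$ in \eqref{defKacOmega}. A secondary subtlety is that the ``low-order'' contributions to $\|z\|_{H^{2}}$ (those not carrying an $\omega$ factor) are not automatically absorbed into $C_{\mathrm{stab}}\|(\omega/c)\varphi\|$, which is why the additive term $c_{\min}/\omega_{0}$ appears alongside $C_{\mathrm{stab}}$ in the final bound.
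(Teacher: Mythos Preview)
Your proposal is correct and follows essentially the same route as the paper: rewrite the adjoint equation as a Poisson problem with right-hand side $a^{-1}\psi + (\nabla a/a)\cdot\nabla z + (1+(\omega/(\sqrt{a}c))^{2})z$ and Neumann datum $-\mathrm{i}(\omega/(\sqrt{a}c))z$, apply the $H^{2}$ regularity estimate \eqref{H2regularity}, bound the boundary term via the trace inequality $H^{1}(\Omega)\to H^{1/2}(\Gamma)$ applied to $(\omega/(\sqrt{a}c))z$ (this is where $\kappa_{c}$ and $\kappa_{a}$ enter through the product rule, yielding $C_{0}^{\prime}$), invoke $C_{\mathrm{stab}}$, and combine with the weighted interpolation estimate. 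One small correction: the additive $c_{\min}/\omega_{0}$ in \eqref{sigmastar_est} actually arises from the \emph{data} term $a^{-1}\psi$ in $\widetilde f$ (which contributes $\|(\omega/c)\varphi\|$ directly without passing through $C_{\mathrm{stab}}$), not from the $z$-dependent terms as you suggest---all of the latter do pick up a factor of $C_{\mathrm{stab}}$; also, $C_{0}$ comes from bounding the volume factor $1+(\omega/(\sqrt{a}c))^{2}$, not from the boundary term.
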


%

\proof
For $\varphi\in L^{2}\left(  \Omega\right)  $, let $z:=T_{a,c}^{\ast}\left(
\left(  \frac{\omega}{c}\right)  ^{2}\varphi\right)  $. Then
\eqref{eq:est_interp} leads to%
\begin{equation}
\inf_{v\in S}\left\Vert z-v\right\Vert _{\mathcal{H},a,c}\ \leq\ \left\Vert
z-I_{h}z\right\Vert _{\mathcal{H},a,c}\ \leq\ C_{\operatorname*{int}}h\left(
a_{\max}^{1/2}+\frac{\omega h}{c_{\min}}\right)  \left\Vert z\right\Vert
_{H^{2}\left(  \Omega\right)  }.\label{intest}%
\end{equation}
To get a bound for $\sigma_{a,c}^{\ast}\left(  S\right)  $ it remains to
estimate $\Vert z\Vert_{H^{2}(\Omega)}$ in terms of $\Vert\left(  \frac
{\omega}{c}\right)  \varphi\Vert$.
To do this we write the adjoint Helmholtz equation (\ref{adjoint_problem}) for
$\lambda:=\left(  \frac{\omega}{c}\right)  ^{2}\varphi$ which defines $z$ as {\imag the solution to}   a
Poisson-type problem%
\[%
\begin{array}
[c]{rll}%
-\Delta z+z & =\left(  \frac{\omega}{\sqrt{a}c}\right)  ^{2}\varphi
+\frac{\nabla a}{a}.\nabla z+\left(  1+\left(  \frac{\omega}{\sqrt{a}%
c}\right)  ^{2}\right)  z & \text{in }\Omega\quad\text{a.e.},\\
{\imag \frac{\partial z}{\partial{n}}} &{\imag  =-\mathrm{\operatorname*{i}}\frac{\omega
}{\sqrt{a}c}z} & \text{on }\Gamma.
\end{array}
\]
Then, the $H^{2}$ regularity (\ref{H2regularity}) implies%
\begin{equation}
\Vert z\Vert_{H^{2}(\Omega)}\ \leq\ C_{\mathrm{reg}}\left(  \left\Vert \left(
\frac{\omega}{\sqrt{a}c}\right)  ^{2}\varphi\right\Vert +\frac{\kappa_{a}%
}{\sqrt{a_{\min}}}\left\Vert \sqrt{a}\nabla z\right\Vert +\left\Vert \left(
1+\left(  \frac{\omega}{\sqrt{a}c}\right)  ^{2}\right)  z\right\Vert
+\left\Vert \frac{\omega}{\sqrt{a}c}z\right\Vert _{H^{1/2}(\Gamma)}\right)
,\label{eq:dag1}%
\end{equation}
{\igg where $$\kappa_a := \Vert \nabla a /a \Vert_{\infty}. $$}  
Utilising the pointwise estimate%
\begin{equation}
1+\left(  \frac{\omega}{\sqrt{a}c}\right)  ^{2}\ \leq\ C_{0}\left(
\frac{\omega}{\sqrt{a}c}\right)  ^{2}\quad\text{with}\quad C_{0}=\left(
1+\left(  \frac{a_{\max}^{1/2}c_{\max}}{\omega_{0}}\right)  ^{2}\right)
,\label{def_c0}%
\end{equation}
we obtain%
\begin{equation}
\left\Vert z\right\Vert _{H^{2}\left(  \Omega\right)  }\leq
C_{\operatorname*{reg}}\left(  \frac{\omega}{a_{\min}c_{\min}}\left\Vert
\frac{\omega}{c}\varphi\right\Vert +\frac{\kappa_{a}}{\sqrt{a_{\min}}%
}\left\Vert \sqrt{a}\nabla z\right\Vert +C_{0}\frac{\omega}{a_{\min}c_{\min}%
}\left\Vert \frac{\omega}{c}z\right\Vert +\left\Vert \frac{\omega}{\sqrt{a}%
c}z\right\Vert _{H^{1/2}\left(  \Gamma\right)  }\right)  .\label{eq:z1new}%
\end{equation}
To estimate the last term, we employ a trace inequality and then some
elementary differentiation to obtain%
\begin{equation}
\left\Vert \frac{\omega}{\sqrt{a}c}z\right\Vert _{H^{1/2}\left(
\Gamma\right)  }\ \leq\ C_{\mathrm{trace}}\left\Vert \frac{\omega}{\sqrt{a}%
c}z\right\Vert _{H^{1}\left(  \Omega\right)  }\ \leq\ C_{0}^{\prime}%
\frac{\omega}{\sqrt{a_{\min}}c_{\min}}\left\Vert z\right\Vert _{\mathcal{H}%
,a,c}\ ,\label{eq:z2}%
\end{equation}
\text{with}
\begin{equation}
C_{0}^{\prime}:=C_{\mathrm{trace}}\left(  \frac{1}{\sqrt{a_{\min}}}%
+\frac{c_{\min}}{\omega_{0}}\left(  1+\kappa_{c}+\kappa_{a}/2\right)  \right)
,\label{defC_0prime}%
\end{equation}
{\imag where $\kappa_c = \Vert \nabla c /c \Vert_\infty$}. 
Hence, combining  {\imag \eqref{eq:z2} and \eqref{defC_0prime}  with \eqref{eq:z1new}}, and
using the definition of $C_{\mathrm{stab}}$ as in Corollary \ref{Cora=1} for
the adjoint problem (\ref{adjoint_problem}) with $\lambda=\left(  \frac
{\omega}{c}\right)  ^{2}\varphi$, we obtain
\begin{align}
\left\Vert z\right\Vert _{H^{2}\left(  \Omega\right)  } &  \leq
C_{\operatorname*{reg}}\left(  \frac{\omega}{a_{\min}c_{\min}}\left\Vert
\frac{\omega}{c}\varphi\right\Vert +\left(  C_{0}+C_{0}^{\prime}{\imag \sqrt{\amin}} +\frac
{\kappa_{a}}{\omega_{0}}\sqrt{a_{\min}}c_{\min}\right)  \frac{\omega}{a_{\min
}c_{\min}}\left\Vert z\right\Vert _{\mathcal{H},a,c}\right)  \nonumber\\
                                                        &  \leq C_{\operatorname*{reg}}\left(  1+\left(  C_{0}+C_{0}^{\prime} {\imag \sqrt{\amin}}
+\frac{\kappa_{a}}{\omega_{0}}\sqrt{a_{\min}}c_{\min}\right)  C_{\mathrm{stab}%
}\frac{\omega}{c_{\min}}\right)  \frac{\omega}{a_{\min}c_{\min}}\left\Vert
\frac{\omega}{c}\varphi\right\Vert .\label{estadjz}%
\end{align}
{\imagg The combination of (\ref{intest}) with (\ref{estadjz}) leads to 
\begin{align}
\frac{\inf_{v\in S}\left\Vert z-v\right\Vert _{\mathcal{H},a,c}}{\left\Vert
\frac{\omega}{c}\varphi\right\Vert }  & \ \leq \ h\left(  \sqrt{\frac{a_{\max}%
}{a_{\min}}}+\frac{\omega}{\sqrt{a_{\min}}c_{\min}}h\right)  \times \nonumber \\
& \times C_{\operatorname*{reg}}C_{\operatorname*{int}}\sqrt{a_{\min}}\left[
1+{\left(  C_{0}+C_{0}^{\prime}{\sqrt{a_{\min}}}+\frac{\kappa_{a}}{\omega_{0}%
}\sqrt{a_{\min}}c_{\min}\right)}  C_{\mathrm{stab}}\frac{\omega}{c_{\min}%
}\right]  \frac{\omega}{a_{\min}c_{\min}}  . \label{eq:RHS}\end{align} 
With $K$ as defined in \eqref{defKacOmega}, 
the right hand side of \eqref{eq:RHS}  can be written  
\begin{align*}
& h\left(  \sqrt{\frac{a_{\max}}{a_{\min}}}+\frac{\omega}{\sqrt{a_{\min}%
}c_{\min}}h\right)   \left[  C_{\operatorname*{reg}}C_{\operatorname*{int}}\sqrt{a_{\min}%
}+KC_{\mathrm{stab}}\frac{\omega}{c_{\min}}\right]  \frac{\omega}{a_{\min
}c_{\min}}\\
& =h\left(  \sqrt{\frac{a_{\max}}{a_{\min}}}+\frac{\omega}{\sqrt{a_{\min}%
}c_{\min}}h\right)  \left[  C_{\operatorname*{reg}}C_{\operatorname*{int}}\sqrt{a_{\min}%
}\frac{c_{\min}}{\omega_{0}}+KC_{\mathrm{stab}}\right]  \frac{\omega^{2}%
}{a_{\min}c_{\min}^{2}}. %
\end{align*}
Now,  using%
\[
K\geq C_{\operatorname*{reg}}C_{\operatorname*{int}}\sqrt{a_{\min}}%
\quad (\text{since }C_{0}\geq1), 
\]
we obtain \eqref{sigmastar_est}. 
}
\endproof

\begin{remark}{\igg [{Consequences of  Theorems \ref{MainTheoremdl2}, \ref{TheoStabDisc}, and \ref{thm:hetconv}}]}

\begin{enumerate}
\item[{\igg (i)}] The right-hand side in the estimate (\ref{sigmastar_est}) blows up
if $a_{\min}^{-1}$, $a_{\max}$, $c_{\min}^{-1}$, $c_{\max}$, $\omega_{0}^{-1}%
$, $\kappa_{a}$, $\kappa_{c}$ tend to infinity but remains bounded otherwise.

\item[\igg{(ii)}] The combination of Theorems \ref{TheoStabDisc} and \ref{thm:hetconv}
gives a complete theory for the lowest order
Galerkin discretisation of the
Helmholtz problem with variable $a$ and $c$ in the case when $C_{\mathrm{stab}%
}<\infty$: If $\omega^{2}h$ is chosen sufficiently small (with respect to the
values of $a_{\min}$, $a_{\max}$, $c_{\min},c_{\max}$, $\kappa_{a}$,
$\kappa_{c}$), then the Galerkin method is well-posed and enjoys quasi-optimal
error estimates in the weighted norm $\Vert\cdot\Vert_{\mathcal{H},a,c}$. The
condition on $\omega^{2}h$ becomes more stringent if $C_{\mathrm{stab}}$,
$a_{\min}^{-1}$, $a_{\max}$, $c_{\min}^{-1}$, $c_{\max}$, $\kappa_{a}$ or
$\kappa_{c}$ increase. This result shows the key role played by the stability
constant $C_{\mathrm{stab}}$ in the Galerkin theory. We also know from
\S \ref{SecDleq2} that a sufficient condition for a frequency-independent
bound $C_{\mathrm{stab}}<\infty$ (under the assumptions of Theorem
\ref{MainTheoremdl2}) involves upper bounds on $c_{\min}^{-1}$, $c_{\max}$ and
$\mathbf{x}.\nabla c/c$ and so the stability and discretisation theories are
intimately linked.

{\igg The
resolution condition `$\omega^{2}h$ sufficiently small' for discrete stability proved here is a first
result for the general class of coefficients which are considered in this
paper and shows the importance of $\omega$-explicit estimates of the stability
constant $C_{\operatorname*{stab}}$. For constant coefficients and
coefficients which are a very small perturbation of a   constant function, the resolution
condition for low order finite elements can be relaxed (cf. \cite{Wua,Wub, Wuc}) to
`$\omega^{3}h^{2}$ sufficiently small' while for finite element methods of higher
polynomial order $p$ the condition `$\frac{\omega h}{p}$  sufficiently small' is optimal,
however, subject to the side constraint $p\gtrsim\log\omega$; see
\cite{MelenkSauterMathComp}, \cite{mm_stas_helm2}. These improved results are
based on the \textquotedblleft decomposition lemma\textquotedblright\ in
\cite{MelenkSauterMathComp} and we are not aware of a generalization of this
lemma to the more general  class of heterogeneous  coefficients considered here.}

\item[{\igg (iii)}] For constant coefficients $a=c=1$ it is shown in
\cite{MelenkSauterMathComp}, \cite{mm_stas_helm2} that higher order methods
perform much better in the pre-asymptotic range than low order methods: the
condition `$\omega^{2}h$ sufficiently small'
can then be replaced by `$\omega h/p$ sufficiently small',  
if the polynomial degree is chosen according to {\igg $p\gtrsim\log \omega $}. For
heterogeneous Helmholtz problems this question is open; first numerical
results are reported in \cite{Torres18}.

\end{enumerate}
\end{remark}




\section{The 1-dimensional Case\label{Sec1DCase}}

{\igg In this section we investigate in detail in the case of $1-D$ problems how the stability estimate depends 
on $a$ and $c$, especially in the case   when the coefficients can become oscillatory. In Theorem \ref{MainStabW11} we prove a bound which shows that the stability constant may grow exponentially in the variance of either $a$ or $c$ or both. Then we give an example which shows this estimate to be essentially sharp.} 
     
Without loss of generality we assume $\Omega=\left(  -L,L\right)  $; a problem
on any other interval can be transferred to $\Omega$ via an affine change of
variable. The boundary consists of the two endpoints $\Gamma=\left\{
-L,L\right\}  $ and we consider the following choices of the Dirichlet- and
impedance parts of the boundary:%
\begin{align}
\begin{array}
[c]{rll}%
\Gamma_{{D}}=\emptyset & \text{and }\Gamma_{{N}%
}=\Gamma & \text{\emph{pure impedance,}}\\
\Gamma_{{D}}=\left\{  L\right\}  & \text{and }\Gamma
_{{N}}=\left\{  -L\right\}  & \text{\emph{Dirichlet-impedance,}%
}\\
\Gamma_{{D}}=\left\{  -L\right\}  & \text{and }\Gamma
_{{N}}=\left\{  L\right\}  & \text{\emph{impedance-Dirichlet}.}%
\end{array}
\label{BCprobs} 
\end{align} 
To reduce technicalities we assume throughout this section that $\beta$ given
in \eqref{eq:ImpBC} is
\begin{equation}
\beta\left(  x\right)  ={\frac{\sqrt{a(x)}}{c(x)}}\quad\quad\text{for} \quad
x\in\Gamma_{{N}}. \label{abeta}%
\end{equation}
The weak form is defined using the sesquilinear form on $\mathcal{H}$:
\begin{equation}
B_{a,c}(u,v)\ :=\ \int_{-L}^{L}\left(  au^{\prime}\overline{v}^{\prime
}-\left(  \frac{\omega}{c}\right)  ^{2}u\overline{v}\right)  -\mathrm{i}%
\sum_{x\in\Gamma_{{N}}}\omega\frac{\sqrt{a\left(  x\right)  }%
}{c\left(  x\right)  }u(x)\overline{v}(x). \label{eq:1DSqf}%
\end{equation}
{We recall that the norm is%
\[
\Vert v\Vert_{\mathcal{H},a,c}^{2}\ =\ \Vert\sqrt{a}v^{\prime}\Vert^{2}
+\left\Vert \left(  \frac{\omega}{c}\right)  v\right\Vert ^{2}.
\]
}
Then the problem is: 
\begin{equation}
{\igg    \text{seek} \quad  u\in\mathcal{H}, \quad \text{such that} \quad }
B_{a,c}(u,v)\ =\ F(v)+G(v), \ \quad\text{for all }\quad v\in\mathcal{H},
\label{eq:1DB}%
\end{equation}%
\begin{equation}
\text{where}\quad G(v)=\sum_{x\in\Gamma_{{N}}}g_{x}\overline
{v}(x)\quad\text{and}\quad F(v)=\int_{-L}^{L}f\overline{v}, \label{eq:1DBrhs}%
\end{equation}
for given data $f\in L^{2}(\Omega)$, and $g_{x}\in\mathbb{C}$, $x\in
\Gamma_{{N}}$. {\igg (Note that here the function $g$ in \eqref{eq:ImpBC}  consists of two values $g_{-L}$ and $g_L$.)}

To describe the properties of $a,c$ we need the following definition of functions with a finite number of jumps and changes of sign.
{\imag \begin{definition}
\label{DefSpaces}
We denote by $C_{\operatorname*{pw}}^{1}[-L,L]$  the space of functions
$g:[-L,L]\rightarrow\mathbb{R}$ such that there exists a {\imag finite}  partition
\begin{equation}
-L=  z_{0}<\ldots < z_{N}=L\ ,\label{partition}%
\end{equation}
\text{with } 
\ $ g \in C^{1}\left[
z_{j-1}, z_j \right]$
for each   $ j = 1, \ldots , N$, and 
\begin{align}
\text{either}\quad g^{\prime}(x)>0\quad &  \text{or}\quad g^{\prime}%
(x)\leq  0 , \quad \text{when} \quad  x \in (z_{j-1}, z_j) .  \label{eq:onesigng}
\end{align}
 The partition depends on $g$ and is not unique; once a partition
for $g$ is identified, any refinement of it is also a partition. 
For each $z_j$, we define the one-sided limits%
\[
g^{-}\left(  z_j\right)  :=\lim_{\substack{x\rightarrow z_j\\x< z_j }}g\left(
x\right)  \quad\text{and\quad}g^{+}\left(  z_j\right)  =\lim
_{\substack{x\rightarrow z_j\\x>z_j}}g\left(  x\right)  .
\]
and the jumps of $g$ at each  $z_{j}$ (here
taken from left to right) are defined by 
\[
\left[  g\right]  _{z_{j}}:=\left\{
\begin{array}
[c]{ll}%
g^{-}\left(  z_{j}\right)  -g^{+}\left(  z_{j}\right)   & 1\leq
j \leq N-1,\\
-g^{+}\left(  -L\right)   & j=0,\\
g^{-}\left(  L\right)   & j=N.
\end{array}
\right.
\]
(When {\igg $g'$} is one-signed  and without any discontinuities, we have $z_0 = -L, z_1 = L$ and no interior points in the partition.) 
Then we define the \emph{regular part} of the derivative of
$g\in${$C_{\mathrm{pw}}^{1}\left(  \Omega\right)  $} by%
\[
 \partial_{\operatorname*{pw}}g (x) \ =
\   g^{\prime}(x), \quad x \in (z_{j-1}, z_j), \quad j = 1, \ldots, N ,  
\]
and the  \emph{variation} of $g$ on $[-L,L]$ is  defined by
\[
\mathrm{Var}(g)={\igg \sum_{\ell=1}^{N-1}}|\left[  g\right]  _{z_{\ell}}%
|\ +\ \int_{-L}^{L}|(\partial_{\mathrm{pw}}g)(s)|\mathrm{d}s . 
\]
For   later notational convenience,  we denote the subintervals of \eqref{partition} as: 
\begin{equation}
\tau_{j}=\left(  z_{j-1},z_{j}\right)  ,\quad \jsub .\label{eq:intervals}%
\end{equation}

\end{definition}

In the following we shall make the following assumption on the coefficients $a,c$. 
\begin{assumption} \label{ass:coeffs} 
We assume that $a,c\in C_{\mathrm{pw}}^{1}$ and that 
\[
a_{\min}\leq a(x)\leq a_{\max}\quad\text{and}\quad c_{\min}\leq c(x)\leq
c_{\max},\quad x\in\lbrack-L,L],  
\]
for some positive $\amin, \cmin$. 
 Then, without loss of generality, 
there is a partition (which we again  write as  \eqref{partition}) so that,   
that for each $\tau_{j}, \ \jsub$, 
$a'$ and $c'$ are both one-signed.  

\end{assumption} 
}
 

While the problem is properly defined by (\ref{eq:1DB}), we also wish to
derive estimates using test functions with discontinuities at the points
$z_{j}$, To allow this we rewrite (\ref{eq:1DB}) as an interface problem. By
adapting the argument in \cite[Theorem 1]{FreletDiss} one can show that
(\ref{eq:1DB}) is equivalent to the problem
\begin{equation}
-(au^{\prime})^{\prime}\ -\ \left(  \frac{\omega}{c^{2}}\right)  ^{2}%
u=f\quad\text{in}\quad\tau_{j},\quad\text{for all}\quad{\jsub } \label{eq:localstrong}%
\end{equation}
together with the interface conditions {\imag at interior points}
\begin{equation}
\left[  u\right]  _{z_{j}}=0\quad\text{and}\quad\left[  au^{\prime}\right]
_{z_{j}}=0,\quad \jint  \label{eq:interface}%
\end{equation}
and the boundary conditions (cf. \eqref{eq:ImpBC} with \eqref{abeta})
\begin{equation}
\left(  a\frac{\partial u}{\partial n}-{\mathrm{i}\omega\frac{\sqrt{a}}{c}%
}u\right)  (x)=g_{x}\quad\text{for all} \  x\in\Gamma_{{N}}\quad
\text{and\quad}u\left(  x\right)  =0\quad\text{for }x\in\Gamma
_{\operatorname*{D}} \label{eq:1DstrongBC}%
\end{equation}
with the \textquotedblleft normal\textquotedblright\ derivative defined as
${\partial u}/{\partial n}\left(  \pm L\right)  :=\pm u^{\prime}\left(  \pm
L\right)  .$ {\imag Note that Assumption \ref{ass:coeffs}   allows  $a$ to be discontinuous at
some of the $z_{j}$ (but does  not imply discontinuity at any particular $z_{j}%
$). If $a$ is continuous at $z_{j}$, the  second  interface condition in 
\eqref{eq:interface} simplifies to $[u^{\prime}]_{z_{j}} = 0$}.

\subsection{Stability estimate for oscillatory and jumping coefficients}
\label{subsec:stab} 
{We note that Theorem \ref{thm:wp} ensures that the problem \eqref{eq:1DB} has
a unique solution.}

\begin{lemma}
\label{lem:qubar1Ddiscts} {\imag Suppose Assumption \eqref{ass:coeffs} is satisfied} and 
let $u$ solve \eqref{eq:1DB}. Then for any
{real-valued} $q$ which satisfies $\left.  q\right\vert _{\tau_{j}}\in
C^{1}(\tau_{j})$, for each {\imag $\jsub $} and
$q\left(  x\right)  =0$ for $x\in\Gamma_{{D}}$, we have
\begin{align}
&  \ \frac{1}{2}\int_{-L}^{L}\left(  \partial_{\operatorname*{pw}}\left(
\frac{q}{a} \right)  \left\vert a u^{\prime}\right\vert ^{2}+\omega
^{2}\partial_{\operatorname*{pw}}\left(  \frac{q}{c^{2}}\right)  \left\vert
u\right\vert ^{2} \right) \nonumber\\
&  \mbox{\hspace{0.5in}}-\frac{1}{2}\sum_{j=1}^{N-1}\left(  \left[
\frac{q}{a}\right]  _{z_{j}}|(au^{\prime})(z_{j})|^{2}+\omega^{2}\left[
\frac{q}{c^{2}}\right]  _{z_{j}}\left\vert u\left(  z_{j}\right)  \right\vert
^{2}\right) \nonumber\\
&  \mbox{\hspace{0.5in}}\ \leq\ \frac{3}{2}\sum_{x\in\Gamma_{{N}%
}}\left\vert q\left(  x\right)  \right\vert \left\vert \frac{\omega}{c\left(
x\right)  }u\left(  x\right)  \right\vert ^{2}+\frac{1}{a_{\min}}\sum
_{x\in\Gamma_{{N}}}\left\vert q\left(  x\right)  \right\vert
\left\vert g_{x}\right\vert ^{2}+\left\vert \left(  f,qu^{\prime}\right)
\right\vert . \label{1DRellich}%
\end{align}

\end{lemma}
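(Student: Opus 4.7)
I would derive \eqref{1DRellich} directly from the strong form \eqref{eq:localstrong}, the interface conditions \eqref{eq:interface}, and the boundary conditions \eqref{eq:1DstrongBC}, rather than from the weak form. The multiplier $q\overline{u'}$ is only piecewise regular and hence not in $\mathcal{H}$, so the weak formulation is unsuitable. Observe first that on each $\tau_j$ the equation $(au')'=-(\omega/c)^2u-f\in L^2(\tau_j)$ gives $au'\in H^1(\tau_j)$, which legitimises the integrations by parts below.

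Multiply \eqref{eq:localstrong} by $q\overline{u'}$ and integrate over $\tau_j$. An integration by parts on the $(au')'$ term, combined with the identities $2\Re(u'\overline{u''})=(|u'|^2)'$ and $2\Re(u\overline{u'})=(|u|^2)'$, and the algebraic identity $aq'-(aq)'/2\cdot 2 = a^2(q/a)'$ (so that $a|u'|^2=(1/a)|au'|^2$), yields
\begin{equation*}
\tfrac{1}{2}\int_{\tau_j}\!\Bigl((q/a)'|au'|^2+\omega^2(q/c^2)'|u|^2\Bigr)-\tfrac{1}{2}\Bigl[(q/a)|au'|^2+\omega^2(q/c^2)|u|^2\Bigr]^{z_j^-}_{z_{j-1}^+}=\Re\!\int_{\tau_j}\! f\,q\,\overline{u'}.
\end{equation*}
Summing over $\jsub$ and using the telescoping rule $\sum_j [g]^{z_j^-}_{z_{j-1}^+}=g(L^-)-g(-L^+)+\sum_{j=1}^{N-1}[g]_{z_j}$ assembles, on the interior $z_j$, exactly the jump terms $[q/a]_{z_j}|au'(z_j)|^2$ and $[q/c^2]_{z_j}|u(z_j)|^2$ (here $u$ and $au'$ are well-defined at each interior $z_j$ by \eqref{eq:interface}). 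Moving these to the left-hand side produces the signed combination appearing on the left of \eqref{1DRellich}, and leaves behind only the $\pm L$ boundary contributions and $\Re(f,qu')$ on the right.

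It remains to bound the $\pm L$ contributions. If $x\in\Gamma_D$ then $u(x)=0$ and, by hypothesis, $q(x)=0$, so both contributions vanish. If $x\in\Gamma_N$, then \eqref{eq:1DstrongBC} gives $au'(x)\cdot n(x)=g_x+i\omega(\sqrt{a(x)}/c(x))u(x)$, hence
\begin{equation*}
|au'(x)|^2\ \leq\ 2|g_x|^2+2\omega^2\tfrac{a(x)}{c(x)^2}|u(x)|^2
\end{equation*}
by $|A+B|^2\leq 2|A|^2+2|B|^2$. Dividing by $a(x)$ and using $1/a(x)\leq 1/a_{\min}$ in the $|g_x|^2$ term,
\begin{equation*}
\tfrac{|q(x)|\,|au'(x)|^2}{2a(x)}+\tfrac{|q(x)|\,\omega^2|u(x)|^2}{2c(x)^2}\ \leq\ \tfrac{|q(x)||g_x|^2}{a_{\min}}+\tfrac{3}{2}|q(x)|\bigl|\tfrac{\omega}{c(x)}u(x)\bigr|^2,
\end{equation*}
giving the constants $3/2$ and $1/a_{\min}$ in the lemma. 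Bounding the remaining $f$-term trivially by $|(f,qu')|$ completes the proof.

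The only real subtlety is sign bookkeeping: the outward normal is $+1$ at $L$ and $-1$ at $-L$, so the telescoped endpoint contributions have opposite signs and must be bounded by the sum of their absolute values — which is precisely why both $\Gamma_N$ endpoints appear as a single $\sum_{x\in\Gamma_N}$ in \eqref{1DRellich}. The sign flip in $au'(-L)=-a\,\partial u/\partial n(-L)$ is invisible inside $|au'(\pm L)|^2$, so it does not affect the constants. No coefficient regularity beyond $a,c\in C^{1}_{\mathrm{pw}}$ is required, as guaranteed by Assumption \ref{ass:coeffs}.
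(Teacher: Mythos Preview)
Your proposal is correct and follows essentially the same route as the paper: multiply the strong form \eqref{eq:localstrong} on each $\tau_j$ by $q\bar u'$, take the real part, rewrite using $(au')'\,\bar u'=\tfrac{1}{a}(au')'\,\overline{au'}$ and $2\Re(u\bar u')=(|u|^2)'$ to produce $\tfrac12\bigl[(q/a)'|au'|^2+\omega^2(q/c^2)'|u|^2\bigr]$ plus boundary evaluations, sum over $j$ using the interface conditions \eqref{eq:interface}, and bound the $\Gamma_N$ endpoint terms via the impedance condition and $|A+B|^2\le 2|A|^2+2|B|^2$. Your displayed ``algebraic identity'' $aq'-(aq)'/2\cdot 2=a^2(q/a)'$ is garbled as written, but the local identity you state immediately after it is exactly the paper's, so the argument goes through.
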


%

\proof
Suppose $\left.  q\right\vert _{\tau_{j}}\in C^{1}(\tau_{j})$ for each
{\imag $\jsub$}. Then,
\begin{equation}
-\Re\int_{z_{j-1}}^{z_{j}}(au^{\prime})^{\prime}q\bar{u}^{\prime}=-\frac{1}%
{2}\int_{z_{j-1}}^{z_{j}}\frac{q}{a}\left(  \left\vert au^{\prime}\right\vert
^{2}\right)  ^{\prime}=\ \frac{1}{2}\int_{z_{j-1}}^{z_{j}}\left(  \frac{q}%
{a}\right)  ^{\prime}\left\vert au^{\prime}\right\vert ^{2}-\frac{1}{2}\left.
\left(  \frac{q}{a}\left\vert au^{\prime}\right\vert ^{2}\right)  \right\vert
_{z_{j-1}}^{z_{j}}\label{eq:local1}%
\end{equation}
and
\begin{equation}
-\Re\int_{z_{j-1}}^{z_{j}}u\left(  \frac{q}{c^{2}}\right)  \bar{u}^{\prime
}=-\frac{1}{2}\int_{z_{j-1}}^{z_{j}}\left(  \left\vert u\right\vert
^{2}\right)  ^{\prime}\left(  \frac{q}{c^{2}}\right)  =\ \frac{1}{2}%
\ \int_{z_{j-1}}^{z_{j}}\left(  \frac{q}{c^{2}}\right)  ^{\prime}\left\vert
u\right\vert ^{2}-\frac{1}{2}\left.  \left(  \frac{q}{c^{2}}\left\vert
u\right\vert ^{2}\right)  \right\vert _{z_{j-1}}^{z_{j}}.\label{eq:local2}%
\end{equation}
Now add (\ref{eq:local1}) to (\ref{eq:local2}) (multiplied by $\omega^{2}$)
and sum over {\imag $\jsub $,}   to obtain (recalling that $u$ satisfies the interface
conditions (\ref{eq:interface})) 
%
\begin{align}
&  -\Re\int_{-L}^{L}\left(  \partial_{\operatorname*{pw}}(au^{\prime})+\left(
\frac{\omega}{c}\right)  ^{2}u\right)  q\bar{u}^{\prime}=\frac{1}{2}
\int_{-L}^{L} \left(\partial_{\operatorname*{pw}}\left(  \frac{q}{a}\right)
\left\vert au^{\prime}\right\vert ^{2}+\omega^{2}\left(  \partial
_{\operatorname*{pw}}\left(  \frac{q}{c^{2}}\right)  \right)  \left\vert
u\right\vert ^{2}\right)  \label{pw1}\\
&  \qquad\quad-\frac{1}{2}\sum_{j=1}^{N}\left.  \left(  \frac{q}%
{a}\left\vert au^{\prime}\right\vert ^{2}\right)  \right\vert _{z_{j-1}%
}^{z_{j}}-\frac{1}{2}\sum_{j=1}^{N}\omega^{2}\left(  \frac{q}{c^{2}}\left.
\left\vert u\right\vert ^{2}\right)  \right\vert _{z_{j-1}}^{z_{j}}\nonumber\\
&  \qquad=\frac{1}{2}\int_{-L}^{L}\left(  \partial_{\operatorname*{pw}%
}\left({\imag \frac{q}{a}}\right)\left\vert au^{\prime}\right\vert ^{2}+\omega^{2}\left(  \partial
_{\operatorname*{pw}}\left(  \frac{q}{c^{2}}\right)  \right)  \left\vert
u\right\vert ^{2}\right)  \nonumber\\
&  \qquad\quad-\frac{1}{2}\sum_{j=1}^{N-1}\left(  \left[  {\frac{q}{a}%
}\right]  _{z_{j}}\left\vert (au^{\prime})\left(  z_{j}\right)  \right\vert
^{2}+\omega^{2}\left[  \frac{q}{c^{2}}\right]  _{z_{j}}\left\vert u\left(
z_{j}\right)  \right\vert ^{2}\right)  \nonumber\\
&  \qquad\quad-\frac{1}{2}\left.  \left[  \left(  qa\left\vert u^{\prime
}\right\vert ^{2}\right)  +\omega^{2}\left(  \frac{q}{c^{2}}\left\vert
u\right\vert ^{2}\right)  \right]  \right\vert _{-L}^{L}.\nonumber
\end{align}
On the other hand, the left-hand side in (\ref{pw1}) equals $\mathfrak{R}%
\left(  f,qu^{\prime}\right)  $ so that
\begin{align}
&  \frac{1}{2}\int_{-L}^{L}\left(  \partial_{\operatorname*{pw}}\left({\imag \frac{q}{a}} \right)\left\vert u^{\prime}\right\vert
^{2} + \omega^{2}\left(  \partial_{\operatorname*{pw}}\left(  \frac{q}{c^{2}}\right)
\right) \left\vert u\right\vert
^{2} \right)  \nonumber\\
&  -\frac{1}{2}{\igg \sum_{j=1}^{N-1}}\left(  \left[  {\frac{q}{a}}\right]
_{z_{j}}\left\vert (au^{\prime})\left(  z_{j}\right)  \right\vert ^{2}%
+\omega^{2}\left[  \frac{q}{c^{2}}\right]  _{z_{j}}\left\vert u\left(
z_{j}\right)  \right\vert ^{2}\right)  \nonumber\\
&  =\frac{1}{2}\left.  \left[  \left(  a\left\vert u^{\prime}\right\vert
^{2}+\frac{\omega^{2}}{c^{2}}\left\vert u\right\vert ^{2}\right)  q\right]
\right\vert _{-L}^{L}+\mathfrak{R}\left(  f,qu^{\prime}\right)
\ .\label{eq:estfirst}%
\end{align}

The required result is then obtained by estimating the first term on the
right-hand side of \eqref{eq:estfirst}. To do this we use the boundary
conditions (\ref{eq:1DstrongBC}) to obtain, for $x\in\Gamma_{{N}%
}$,
\[
a\left(  x\right)  \left\vert u^{\prime}\left(  x\right)  \right\vert ^{2}%
\leq2\left(  \frac{\omega}{c\left(  x\right)  }\right)  ^{2}\left\vert
u\left(  x\right)  \right\vert ^{2}+{2\frac{\left\vert g_{x}\right\vert ^{2}%
}{a\left(  x\right)  }}.
\]
Combining this with \eqref{eq:estfirst}
yields the result.
\endproof

This lemma leads to the following theorem, which identifies suitable
properties of $q$ which will lead to an \emph{a priori} bound for $u$.
Following this we will describe how to construct $q$ satisfying these properties.

\begin{theorem}
\label{th:stability1D} {\imag Suppose Assumption \eqref{ass:coeffs} is satisfied.} Let $u$ solve \eqref{eq:1DB} and suppose $q$ can be
chosen as in Lemma \ref{lem:qubar1Ddiscts}, with the two additional properties:

\begin{enumerate}
\item For any ${\imag \jsub}$ ,
\begin{equation}
\partial_{\mathrm{pw}}\left(  \frac{q}{a}\right)  (x)\ \geq\ \frac{1}%
{a(x)},\quad\text{and}\quad\partial_{\mathrm{pw}}\left(  \frac{q}{c^{2}%
}\right)  (x)\ \geq\ \frac{1}{c^{2}(x)},\quad x\in\tau_{j}.  \label{pwsder}%
\end{equation}

\item We have the negative {\imag interior} jumps:
\begin{align}
\quad\left[  \frac{q}{a}\right]  _{z_{j}} \leq0  &  \quad\text{and}
\quad\left[  \frac{q}{c^{2}}\right]  _{z_{j}} \leq0, \quad \jint. \label{negjumps}%
\end{align}
\end{enumerate}
Then for $\omega_{0}>$0, $f\in L^{2}\left(  \Omega\right)  $ and
$g:\Gamma_{{N}}\rightarrow\mathbb{C}$ the a priori bound
\begin{equation}
\left\Vert u\right\Vert _{\mathcal{H},a,c}\ \leq\ C_{\operatorname*{stab}%
}^{\operatorname{I}}Q\left\Vert f\right\Vert +C_{\operatorname*{stab}%
}^{\operatorname{II}}\sqrt{Q}\left\Vert g\right\Vert _{\Gamma
_{\operatorname*{N}}} \label{MainStab1D}%
\end{equation}
holds, for all $\omega\geq\omega_{0}$, with $Q=\Vert q\Vert_{L^{\infty}\left(
[-L,L]\right)  }$ , 
\begin{equation}
C_{\operatorname*{stab}}^{\operatorname{I}}:=\frac{2}{\sqrt{a_{\min}}}\left(
1+3\frac{c_{\max}}{c_{\min}}\right)  \quad\text{and\quad}%
C_{\operatorname*{stab}}^{\operatorname{II}}:=\frac{2}{\sqrt{a_{\min}}}%
\sqrt{\frac{3}{2}\frac{c_{\max}}{c_{\min}}+1}. \label{DecCstabI-II}%
\end{equation}

.
\end{theorem}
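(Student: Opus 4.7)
The plan is to substitute the prescribed test function $q$ into the identity of Lemma~\ref{lem:qubar1Ddiscts}, bound the left-hand side from below by $\tfrac12\|u\|_{\mathcal{H},a,c}^2$ using the two structural assumptions on $q$, and then control each of the three terms on the right-hand side in terms of the data $f$ and $g$. Assumption~(1) is designed exactly so that $\partial_{\mathrm{pw}}(q/a)|au'|^2 \geq a|u'|^2$ and $\omega^{2}\partial_{\mathrm{pw}}(q/c^{2})|u|^2\geq(\omega/c)^2|u|^2$ pointwise, so the integral portion of the LHS majorises $\tfrac12\|u\|_{\mathcal{H},a,c}^2$; Assumption~(2), together with the minus sign in front of the interior jump sum, makes every interior jump contribution on the LHS nonnegative, so these jumps may simply be discarded.

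The two ``easy'' terms on the right-hand side of Lemma~\ref{lem:qubar1Ddiscts} are bounded using only $\|q\|_{L^\infty}\leq Q$ and Cauchy-Schwarz:
\[
\frac{Q}{a_{\min}}\sum_{x\in\Gamma_{N}}|g_x|^2 = \frac{Q}{a_{\min}}\|g\|_{\Gamma_{N}}^2,\qquad |(f,qu')|\leq Q\|f\|\|u'\|\leq \frac{Q}{\sqrt{a_{\min}}}\|f\|\|u\|_{\mathcal{H},a,c}.
\]
The only nontrivial term is the boundary sum $\tfrac{3Q}{2}\sum_{\Gamma_{N}}|(\omega/c(x))u(x)|^2$. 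To handle it I would take $v=u$ in the weak form \eqref{eq:1DB}; since the integral part of $B_{a,c}(u,u)$ is real, the imaginary part yields
\[
\omega\sum_{x\in\Gamma_{N}}\frac{\sqrt{a(x)}}{c(x)}|u(x)|^2 \leq |(f,u)| + |G(u)|.
\]
The pointwise inequality $(\omega/c(x))^2 \leq \big(\omega/(c_{\min}\sqrt{a_{\min}})\big)\cdot \omega\sqrt{a(x)}/c(x)$ then converts this into an upper bound on $S:=\sum_{\Gamma_N}(\omega/c(x))^2|u(x)|^2$. Using $|(f,u)|\leq (c_{\max}/\omega)\|f\|\|u\|_{\mathcal{H},a,c}$ and $|G(u)|\leq \|g\|_{\Gamma_{N}}(c_{\max}/\omega)\sqrt S$ (from Cauchy-Schwarz plus $\sum|u(x)|^2\leq (c_{\max}/\omega)^2 S$) together with a Young's inequality absorbing the resulting $\sqrt S$ on the right gives
\[
S\leq 2R\|f\|\|u\|_{\mathcal{H},a,c} + R^{2}\|g\|_{\Gamma_{N}}^{2},\qquad R:=\frac{c_{\max}}{c_{\min}\sqrt{a_{\min}}}.
\]

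Assembling the three estimates reduces the whole problem to an inequality of the shape $\|u\|_{\mathcal{H},a,c}^2 \leq \alpha\|f\|\|u\|_{\mathcal{H},a,c}+\beta\|g\|_{\Gamma_{N}}^2$, with $\alpha$ a linear combination of $Q$ and $QR$ and $\beta$ a linear combination of $QR^{2}$ and $Q/a_{\min}$. A final Young step $\alpha\|f\|\|u\|_{\mathcal{H},a,c}\leq \tfrac12\alpha^{2}\|f\|^2 + \tfrac12\|u\|_{\mathcal{H},a,c}^{2}$ absorbs the cross term into the left-hand side and leads to $\|u\|_{\mathcal{H},a,c}\leq \alpha\|f\| + \sqrt{2\beta}\|g\|_{\Gamma_{N}}$; with $R$ as above, the arithmetic collapses $\alpha$ to $C_{\mathrm{stab}}^{\mathrm{I}}Q$ and $\sqrt{2\beta}$ to $C_{\mathrm{stab}}^{\mathrm{II}}\sqrt Q$ in the stated form. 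The main obstacle is not conceptual but book-keeping: keeping the $Q$-dependence clean through the two successive Young steps and matching the precise prefactors in $C_{\mathrm{stab}}^{\mathrm{I}}$ and $C_{\mathrm{stab}}^{\mathrm{II}}$. The only genuinely subtle ingredient is the use of the imaginary part of the weak form with $v=u$, which exploits the impedance boundary condition (the $-\mathrm{i}\omega\sqrt{a}/c$ term in $B_{a,c}$) to convert an otherwise uncontrolled boundary quantity into one bounded by $\|f\|$ and $\|g\|_{\Gamma_{N}}$.
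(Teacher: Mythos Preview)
Your strategy is exactly that of the paper: feed the structural assumptions on $q$ into Lemma~\ref{lem:qubar1Ddiscts} to get $\tfrac12\|u\|_{\mathcal H,a,c}^2$ on the left, bound the two easy right-hand terms by $Q$, and control the boundary term $\sum_{\Gamma_N}|(\omega/c)u|^2$ by taking the imaginary part of $B_{a,c}(u,u)=F(u)+G(u)$, then close with Young inequalities. The logical structure is correct and your $C_{\mathrm{stab}}^{\mathrm I}$ comes out exactly as stated.

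There is one place where your weighting is slightly cruder than the paper's and this prevents you from hitting the stated $C_{\mathrm{stab}}^{\mathrm{II}}$. In bounding $|G(u)|=|(g,u)_{\Gamma_N}|$ you pass through $\sum_{\Gamma_N}|u(x)|^2\le (c_{\max}/\omega)^2 S$, which costs an extra factor $c_{\max}/c_{\min}$; your final constant becomes $\tfrac{2}{\sqrt{a_{\min}}}\sqrt{\tfrac32(c_{\max}/c_{\min})^2+1}$ rather than $\tfrac{2}{\sqrt{a_{\min}}}\sqrt{\tfrac32(c_{\max}/c_{\min})+1}$. The paper avoids this loss by applying Young to $(g,u)_{\Gamma_N}$ directly with the weight $\sqrt{\omega\sqrt{a}/c}$, i.e.\ $|(g,u)_{\Gamma_N}|\le \tfrac12\|\sqrt{\omega\sqrt a/c}\,u\|_{\Gamma_N}^2+\tfrac12\|\sqrt{c/(\omega\sqrt a)}\,g\|_{\Gamma_N}^2$, and only afterwards converting to $\|(\omega/c)u\|_{\Gamma_N}^2$ via the factor $\omega/(c_{\min}\sqrt{a_{\min}})$. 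This keeps the $g$-coefficient at $c_{\max}/(c_{\min}a_{\min})$ rather than $c_{\max}^2/(c_{\min}^2 a_{\min})$. Apart from this sharpening, your argument and the paper's coincide.
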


%

\proof
In the following, $\varepsilon,\varepsilon_{1},\varepsilon_{2}\ldots$ denote
positive real numbers. Also, we will make frequent use of the following
elementary estimate. For two functions $\mu,\nu\in L^{2}\left(  \Omega\right)
$ and any positive function $\delta\in L^{\infty}\left(  \Omega,\left[
\delta_{0},\delta_{1}\right]  \right)  $ for some $0<\delta_{0}\leq\delta
_{1}<\infty$, it follows%
\begin{equation}
\left\vert \left(  \mu,\nu\right)  \right\vert \leq\frac{1}{2}\left\Vert
\delta\mu\right\Vert ^{2}+\frac{1}{2}\left\Vert \frac{\nu}{\delta}\right\Vert
^{2}. \label{pq}%
\end{equation}

Using Lemma \ref{lem:qubar1Ddiscts}, and making use of \eqref{pwsder} and
\eqref{negjumps}, we obtain 
{\imag \begin{align} \frac{1}{2} \Vert u \Vert_{\cH, a,c}^2 \ \leq \ \frac{3}{2} \sum_{x \in \Gamma_N} \vert q(x) \vert \left\vert \frac{\omega}{c(x)} u(x) \right\vert^2 + \frac{1}{\amin} \sum_{x \in \Gamma_N} \vert q(x) \vert \vert g_x \vert^2 + \vert (f, qu')\vert. \label{eq:firstQ} \end{align} 
For convenience we introduce the notation (for suitable functions $f,g$), 
$$ (f,g)_{\Gamma_N} \ = \ \sum_{x \in \Gamma_N} f(x) \overline{g(x)} \quad \text{and} \quad \Vert 
f\Vert_{\Gamma_N}^2 = (f,f)_{\Gamma_N}\ . $$  }
Then \eqref{eq:firstQ} yields
\begin{align}\label{eq:secondQ} 
\ \frac{1}{2Q}\Vert u\Vert_{\mathcal{H},a,c}^{2}\ \leq\frac{3}{2}\left\Vert
\frac{\omega}{c}u\right\Vert _{\Gamma_{{N}}}^{2}+\int_{-L}%
^{L}\left\vert f\right\vert \left\vert u^{\prime}\right\vert \ +\ \frac{1}{a_{\min}}
{\left\Vert g\right\Vert _{\Gamma_{{N}}}^{2}}.
\end{align}
To estimate the first term on the right-hand side of {\imag \eqref{eq:secondQ}}, we insert
$v=u$ into (\ref{eq:1DB}) and take the imaginary part of each side to obtain
\begin{equation}
\left\Vert \sqrt{\frac{\omega\sqrt{a}}{c}}u\right\Vert _{\Gamma
_{\operatorname*{N}}}^{2}\leq\left\vert \left(  f,u\right)  \right\vert
+\left\vert \left(  g,u\right)  _{\Gamma_{{N}}}\right\vert .
\label{estomegacu}%
\end{equation}
We then use (\ref{pq}) with $\mu =\left.  u\right\vert _{\Gamma
_{\operatorname*{N}}}$, $\nu=g$, and $\delta=\sqrt{\frac{\omega\sqrt{a}}{c}}$ to
obtain%
\[
\left\vert \left(  g,u\right)  _{\Gamma_{{N}}}\right\vert
\leq\frac{1}{2}\left\Vert \sqrt{\frac{\omega\sqrt{a}}{c}}u\right\Vert
_{\Gamma_{{N}}}^{2}+\frac{1}{2}\left\Vert \sqrt{\frac{c}%
{\omega\sqrt{a}}}g\right\Vert _{\Gamma_{N}}^{2}%
\]
{so that \eqref{estomegacu} yields}%
\[
\left\Vert \sqrt{\frac{\omega\sqrt{a}}{c}}u\right\Vert _{\Gamma
_{\operatorname*{N}}}^{2}\leq\left\Vert \frac{f}{\delta_{2}}\right\Vert
^{2}+\left\Vert \delta_{2}u\right\Vert ^{2}+\left\Vert \sqrt{\frac{c}%
{\omega\sqrt{a}}}g\right\Vert _{\Gamma_{{N}}}^{2},\quad
{\text{for any}\quad\delta_{2}>0}\,.
\]
{Hence}
\begin{align*}
\left\Vert \frac{\omega}{c}u\right\Vert _{\Gamma_{{N}}}^{2}  &
\leq\frac{\omega}{c_{\min}\sqrt{a_{\min}}}\left\Vert \sqrt{\frac{\omega
\sqrt{a}}{c}}u\right\Vert _{\Gamma_{{N}}}^{2}\\
&  \leq\frac{1}{c_{\min}\sqrt{a_{\min}}}\left(  \omega\left\Vert \frac
{f}{\delta_{2}}\right\Vert ^{2}+c_{\max}\left\Vert \sqrt{\frac{\omega}{c}%
}\delta_{2}u\right\Vert ^{2}+{\frac{c_{\max}}{\sqrt{a_{\min}}}}\left\Vert
g\right\Vert _{\Gamma_{{N}}}^{2}\right)  .
\end{align*}
We choose $\delta_{2}=\sqrt{\varepsilon_{1}\omega/c}$ to finally obtain%
\[
\left\Vert \frac{\omega}{c}u\right\Vert _{\Gamma_{{N}}}^{2}%
\leq\frac{c_{\max}}{c_{\min}\sqrt{a_{\min}}}\left(  \frac{1}{\varepsilon_{1}%
}\left\Vert f\right\Vert ^{2}+\varepsilon_{1}\left\Vert \frac{\omega}%
{c}u\right\Vert ^{2}+{\frac{1}{\sqrt{a_{\min}}}}\left\Vert g\right\Vert
_{\Gamma_{{N}}}^{2}\right)  .
\]
Now, substituting this for the first term on the right-hand side of
{\imag (\ref{eq:secondQ})}  and estimating the second term similarly, we obtain
\begin{align*}
&  \ \frac{1}{2Q}\Vert u\Vert_{\mathcal{H},a,c}^{2}\ \leq\ \frac{3}{2}%
\frac{c_{\max}}{c_{\min}\sqrt{a_{\min}}}\left(  \frac{1}{\varepsilon_{1}%
}\left\Vert f\right\Vert ^{2}+\varepsilon_{1}\left\Vert \frac{\omega}%
{c}u\right\Vert ^{2}+{\frac{1}{\sqrt{a_{\min}}}}\left\Vert g\right\Vert
_{\Gamma_{{N}}}^{2}\right) \\
&  \mbox
{\hspace{1.2in}}+\frac{1}{2\varepsilon_{2}}\left\Vert f\right\Vert ^{2}%
+{\frac{\varepsilon_{2}}{2{a_{\min}}}\left\Vert \sqrt{a}u^{\prime}\right\Vert
^{2}}\ +\ \frac{\left\Vert g\right\Vert _{\Gamma_{{N}}}^{2}%
}{a_{\min}}.
\end{align*}
The choices of $\varepsilon_1$ and $\varepsilon_2$ given by 
\[
\varepsilon_{1}\frac{3}{2}\frac{c_{\max}}{c_{\min}\sqrt{a_{\min}}}=\frac
{1}{4Q}\quad\text{and }\quad{\frac{\varepsilon_{2}}{2a_{\min}}=\frac{1}{4Q}}%
\]
lead to%
\[
\frac{1}{4Q}\Vert u\Vert_{\mathcal{H},a,c}^{2}\ \leq\ Q\left(  {\frac
{1}{a_{\min}}}+9\frac{c_{\max}^{2}}{c_{\min}^{2}a_{\min}}\right)  \left\Vert
f\right\Vert ^{2}+{\frac{1}{a_{\min}}\left(  1+\frac{3}{2}\frac{c_{\max}%
}{c_{\min}}\right)  \left\Vert g\right\Vert _{\Gamma_{{N}}}^{2}} , %
\]
which yields the result after straightforward algebraic manipulations.%
\endproof

{\imag Recall that the coefficients $a,c$ are required to satisfy Assumption \ref{ass:coeffs}.}
 In order to construct an appropriate function $q$ in Theorem
\ref{th:stability1D}, we introduce the following definition. From the function
$a$ defined above, and for each $j$, we define, for $x\in\tau_{j} = (z_{j-1}, z_j)$,
\begin{align}
\widetilde{a}(x)\ =\ \left\{
\begin{array}
[c]{ll}%
a(x) & \text{when}\quad a^{\prime}(x)>0,\ \\
a^{+}(z_{j-1}) & \text{when}\quad a^{\prime}(x)\leq0,
\end{array}
\right.  . 
\label{eq:atilde} 
\end{align}
{\imagg The values of $\widetilde{a} $ at the breakpoints $z_j$ are unimportant in what follows, but for definiteness we shall require $\widetilde{a}$ to be right continuous at each $z_j$, ($j<N$) and left continuous at $z_N$. }


The
function $\widetilde{c}$ is defined analogously and it is easily verified that
$\widetilde{c^{2}}=\widetilde{c}^{2}$.
From this definition we have the following two propositions. The proof of the
first is very elementary and so omitted.

\begin{proposition} {\imag Under Assumption \ref{ass:coeffs}},    
\label{prop:elem} for  $\jsub
$, 
\[
\widetilde{a}(x) \geq \amin >0,\quad \quad\widetilde{a}^{\prime}(x)\geq0,\quad\quad (\widetilde{a}%
/a)^{\prime}(x)\geq0, \quad \quad\text{for all } \ x\in\mathrm{int}(\tau_{j}), 
\]
{\imag with the analogous result for $\widetilde{c}$. }

\end{proposition}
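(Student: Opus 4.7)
\noindent
\textbf{Proof proposal for Proposition \ref{prop:elem}.}
The plan is to verify each of the three claimed inequalities by a case analysis on the sign of $a'$ on each subinterval $\tau_j = (z_{j-1}, z_j)$, which is justified by Assumption~\ref{ass:coeffs} (each $\tau_j$ has $a'$ one-signed, in the sense of \eqref{eq:onesigng}). Once the argument for $\widetilde a$ is in place, the proof for $\widetilde c$ is obtained by replacing $a$ with $c$ throughout.

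Fix $j \in \{1,\ldots,N\}$ and $x \in \operatorname{int}(\tau_j)$. In the case $a'(x) > 0$ on $\tau_j$, one has $\widetilde a(x) = a(x)$ on the entire interval by \eqref{eq:atilde}. The three inequalities are immediate: $\widetilde a(x) = a(x) \geq a_{\min}$, $\widetilde a'(x) = a'(x) > 0$, and $(\widetilde a / a)'(x) = (1)' = 0$.

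In the complementary case $a'(x) \leq 0$ on $\tau_j$, the function $\widetilde a$ is the constant $a^+(z_{j-1})$ on $\tau_j$. Since $a \geq a_{\min}$ pointwise on $[-L,L]$, the one-sided limit satisfies $a^+(z_{j-1}) \geq a_{\min}$, giving the first inequality. The second is trivial since $\widetilde a' \equiv 0$ on $\operatorname{int}(\tau_j)$. For the third, compute
\[
\left(\frac{\widetilde a}{a}\right)'(x) \ =\ a^+(z_{j-1}) \cdot \left(\frac{1}{a}\right)'(x) \ =\ -\, \frac{a^+(z_{j-1}) \, a'(x)}{a(x)^2} \ \geq\ 0,
\]
since $a^+(z_{j-1}) > 0$, $a(x) > 0$, and $a'(x) \leq 0$ by assumption.

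The only subtle point --- which one should record for completeness --- is that in the second case $\widetilde a$ is defined as the \emph{right} limit $a^+(z_{j-1})$ rather than $a(z_{j-1})$ itself, so strict positivity and the bound by $a_{\min}$ follow from the essential bounds on $a$ together with the existence of the one-sided limit (guaranteed by $a \in C_{\mathrm{pw}}^1[-L,L]$ on the partition of Assumption~\ref{ass:coeffs}). No further difficulty is expected.
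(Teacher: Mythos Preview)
Your proposal is correct. The paper omits the proof as ``very elementary,'' and what you have written is precisely the natural case analysis dictated by the definition \eqref{eq:atilde}; there is nothing to add or compare.
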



\begin{proposition}{\imag Under Assumption \ref{ass:coeffs}},
\label{prop:var}
\[
\mathrm{Var}(\widetilde{a}) \ \leq\ \mathrm{Var}(a) , 
\]
{\imag with the analogous result for $\widetilde{c}$.}
\end{proposition}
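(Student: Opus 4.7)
The plan is to compare $\mathrm{Var}(\widetilde a)$ with $\mathrm{Var}(a)$ on an interval-by-interval basis, using the partition \eqref{partition} on which $a'$ is one-signed. The key observation is that $\widetilde a$ is defined to be constant on every subinterval on which $a'\le 0$, which kills the integral contribution there but may create (or enlarge) a jump at the right endpoint of that subinterval. The whole argument then reduces to checking that the enlargement of the jump is compensated by the loss of the integral on the same subinterval. To make this accounting transparent, I will reorganise the variation as
\[
\mathrm{Var}(g)\ =\ \int_{\tau_N}|\partial_{\mathrm{pw}} g|\ +\ \sum_{j=1}^{N-1}\Bigl(\int_{\tau_j}|\partial_{\mathrm{pw}} g|\ +\ |[g]_{z_j}|\Bigr),
\]
so it suffices to prove the estimate term by term for $g=\widetilde a$ versus $g=a$.

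First I would dispose of the trivial case: on the last subinterval $\tau_N$ (which has no right-hand jump contribution), if $a'>0$ then $\widetilde a=a$ and the integrals are equal, while if $a'\le 0$ then $\widetilde a$ is constant and $\int_{\tau_N}|\widetilde a'|=0\le \int_{\tau_N}|a'|$.

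Next, for each $j\in\{1,\ldots,N-1\}$ I would examine the boundary values. From \eqref{eq:atilde} together with the right-continuity convention, one obtains $\widetilde a^{+}(z_j)=a^{+}(z_j)$ in both cases, while $\widetilde a^{-}(z_j)=a^{-}(z_j)$ if $a'>0$ on $\tau_j$ (Case A) and $\widetilde a^{-}(z_j)=a^{+}(z_{j-1})$ if $a'\le 0$ on $\tau_j$ (Case B). Hence in Case A the jump and the integral are preserved exactly, so equality holds for that $j$. In Case B the integral contribution of $\widetilde a$ on $\tau_j$ vanishes, while
\[
|[\widetilde a]_{z_j}|\ =\ |a^{+}(z_{j-1})-a^{+}(z_j)|\ \le\ \bigl(a^{+}(z_{j-1})-a^{-}(z_j)\bigr)+|a^{-}(z_j)-a^{+}(z_j)|\ =\ \int_{\tau_j}|a'|+|[a]_{z_j}|,
\]
where in the middle step I used that $a$ is non-increasing on $\tau_j$, so $a^{+}(z_{j-1})-a^{-}(z_j)\ge 0$ equals $\int_{\tau_j}|a'|$, together with the triangle inequality. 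This gives the desired per-interval bound in Case B.

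Summing the per-interval inequalities over $j=1,\ldots,N$ yields $\mathrm{Var}(\widetilde a)\le\mathrm{Var}(a)$, and the analogous bound for $\widetilde c$ follows identically. The only part that requires a little care is the identification of the one-sided limits of $\widetilde a$ at the breakpoints under the right-continuity convention; everything else is bookkeeping. I do not expect a real obstacle here.
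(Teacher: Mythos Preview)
Your proof is correct and follows essentially the same approach as the paper: identify that $\widetilde a^{+}(z_j)=a^{+}(z_j)$ always, split into the cases where $a$ is increasing versus non-increasing on $\tau_j$, and in the non-increasing case write $[\widetilde a]_{z_j}=a^{+}(z_{j-1})-a^{+}(z_j)=\int_{\tau_j}|a'|+[a]_{z_j}$, then take absolute values and sum. Your version is slightly more explicit about the triangle inequality and the per-interval bookkeeping, but the substance is identical.
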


%

\proof
{\imag Let $\widetilde{\Omega}\ :=\ 
\cup \{\tau_j:a^{\prime}\vert_{\tau_j} > 0 \}  $. } Then, by definition of $\widetilde{a}$,
\[
\int_{-L}^{L}|\partial_{\mathrm{pw}}\widetilde{a}|\ =\ \sum_{\tau_j \subset \widetilde{\Omega}} 
\int_{\tau_j}|a' |\ .
\]
Next, 
we note that if  $a$ is increasing in $\tau_{j}$, then
$[\widetilde{a}]_{z_{j}}=[a]_{z_{j}}$. On the other hand if $a$ is non-increasing
in $\tau_{j}$, then
\[
\lbrack\widetilde{a}]_{z_{j}}=a^{+}(z_{j-1})-a^{+}(z_{j})=(a^{+}(z_{j-1}%
)-a^{-}(z_{j}))+(a^{-}(z_{j})-a^{+}(z_{j}))=\left(  \int_{\tau_{j}}|a^{\prime
}|\right)  +[a]_{z_{j}}.
\]
The result follows on combination of these relations. \ 
\endproof

\begin{notation}
\label{not:bar}
Noting that, by Assumption \ref{ass:coeffs} and definition \eqref{eq:atilde},   $a,c$, $\widetilde{a}$ and $\widetilde{c}$ are all $C_{\mathrm{pw}%
}^{1}$ functions with respect to the partition \eqref{partition}, 
we introduce,  for $\jint$, the quantities  
\[
\alpha_{j}=\max\left\{  \frac{\widetilde{a}^{-}(z_{j})}{\widetilde{a}%
^{+}(z_{j})},1\right\}  ,
\]%
\[
\sigma_{j}=\max\left\{  \frac{(\widetilde{c}^{2})^{-}(z_{j})}{(\widetilde{c}%
^{2})^{+}(z_{j})},1\right\},   
\]
and
\[
\gamma_{j}\ =\ \max\left\{  \frac{a^{+}(z_{j})}{a^{-}(z_{j})},\frac
{(c^{2})^{+}(z_{j})}{(c^{2})^{-}(z_{j})},1\right\}  
.
\]
\end{notation}
This leads us to the definition of the function $q$ which we shall use in
conjunction with Theorem \ref{th:stability1D}.

{\imag \begin{definition}
[the function $q$]\label{defq} We define the increasing sequence of positive
numbers \newline$\{A_{j}:j=1,\ldots,{N}\}$
inductively by
\begin{align}
A_{1}=0\quad\text{and}\quad A_{j+1}\  &  =\ \alpha_{j}\sigma_{j}\gamma
_{j}\left(  \int_{\tau_{j}} \frac{1}{\widetilde{a}\,\widetilde{c}^{2}} + A_{j}%
\right)  ,\quad j=1,\ldots,{\igg N-1},\label{eq:Apos}
\end{align}
Then we define the function $q\in C_{\mathrm{pw}}^{1}[-L,L]$ by 
\begin{align}
q(x)\ & =\ {\widetilde{a}(x)\widetilde{c}^{2}(x)}\left(   \int%
_{z_{j-1}}^{x}\frac{1}{\widetilde{a}(s)\widetilde{c}^{2}(s)}\mathrm{d}%
s  \ +\   A_{j}\right)   ,\quad x\in\tau_{j},  \quad 1 \leq j \leq N.  \label{expq}
 \end{align}

\end{definition}
} 


\begin{lemma}
\label{lem:q}
Under Assumption \ref{ass:coeffs},  
the function $q$ defined in \eqref{expq} is increasing on 
 $[-L, L]$ with $q(-L) = 0$ and satisfies the requirements
\eqref{pwsder} and \eqref{negjumps} of Theorem \ref{th:stability1D}. 

\end{lemma}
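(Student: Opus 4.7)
The plan is to verify each of the three claims --- $q(-L)=0$, $q$ increasing, and properties \eqref{pwsder}, \eqref{negjumps} --- by direct computation from \eqref{expq}, leaning on Proposition \ref{prop:elem} for the interior derivatives and on the definitions of $\alpha_j,\sigma_j,\gamma_j$ (Notation \ref{not:bar}) for the interface jumps. The key structural observation is that every quantity that appears is expressible as a non-negative multiple of the \emph{same} common factor
\[
B_j\ :=\ \int_{\tau_j}\frac{1}{\widetilde{a}\,\widetilde{c}^2}\ +\ A_j \ \geq\ 0,
\]
and the recursion \eqref{eq:Apos} is exactly $A_{j+1}=\alpha_j\sigma_j\gamma_j B_j$. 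This lets one reduce every inequality to the trivial fact that $\alpha_j,\sigma_j,\gamma_j\geq 1$.

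First, $q(-L)=0$ is immediate since $A_1=0$ and the integral over $[z_0,-L]$ is empty. To show \eqref{pwsder}, on each interval $\tau_j$ I would rewrite
\[
\frac{q}{a}=\frac{\widetilde{a}}{a}\,\widetilde{c}^2\Bigl(\!\int_{z_{j-1}}^{x}\frac{ds}{\widetilde{a}\widetilde{c}^2}+A_j\Bigr),\qquad \frac{q}{c^2}=\widetilde{a}\,\frac{\widetilde{c}^2}{c^2}\Bigl(\!\int_{z_{j-1}}^{x}\frac{ds}{\widetilde{a}\widetilde{c}^2}+A_j\Bigr),
\]
and differentiate using the product rule. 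Each produces three terms: two involve derivatives of $\widetilde{a}/a$, $\widetilde{c}^2$, $\widetilde{a}$, or $\widetilde{c}^2/c^2$, all non-negative by Proposition \ref{prop:elem} (note $(\widetilde{c}^2/c^2)'=((\widetilde{c}/c)^2)'\geq 0$ since $\widetilde{c}/c>0$ and is non-decreasing) and all multiplied by the non-negative bracketed factor; the third term reduces, after cancellation, exactly to $1/a$ (resp.\ $1/c^2$). This yields \eqref{pwsder}.

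Next, to verify \eqref{negjumps} I would compute the one-sided limits at $z_j$. Using $\int_{z_j}^{z_j}=0$ on $\tau_{j+1}$, the right-limit is built from $A_{j+1}=\alpha_j\sigma_j\gamma_j B_j$, while the left-limit is built from $B_j$, giving
\[
\Bigl[\frac{q}{a}\Bigr]_{z_j}=B_j\Bigl[\frac{\widetilde{a}^-}{a^-}(\widetilde{c}^2)^--\frac{\widetilde{a}^+}{a^+}(\widetilde{c}^2)^+\alpha_j\sigma_j\gamma_j\Bigr],
\]
and an analogous expression for $[q/c^2]_{z_j}$ with the factor $\widetilde{a}\,(\widetilde{c}^2/c^2)$ instead. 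The defining inequalities $\alpha_j\geq \widetilde{a}^-/\widetilde{a}^+$, $\sigma_j\geq(\widetilde{c}^2)^-/(\widetilde{c}^2)^+$, $\gamma_j\geq a^+/a^-$ (or $\gamma_j\geq (c^2)^+/(c^2)^-$ for the second jump) make the bracket $\leq 0$, giving \eqref{negjumps}.

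Finally, monotonicity follows by the same kind of bookkeeping: on each $\tau_j$,
\[
q'(x)\ =\ \partial_{\mathrm{pw}}(\widetilde{a}\widetilde{c}^2)\Bigl(\!\int_{z_{j-1}}^{x}\frac{ds}{\widetilde{a}\widetilde{c}^2}+A_j\Bigr)\ +\ 1\ \geq\ 1,
\]
since $\widetilde{a}\widetilde{c}^2$ is non-decreasing (Proposition \ref{prop:elem}) and $A_j\geq 0$. Across each breakpoint,
\[
q^+(z_j)-q^-(z_j)=B_j\bigl[\widetilde{a}^+(\widetilde{c}^2)^+\alpha_j\sigma_j\gamma_j-\widetilde{a}^-(\widetilde{c}^2)^-\bigr]\geq 0,
\]
again because $\alpha_j\sigma_j\gamma_j\geq(\widetilde{a}^-/\widetilde{a}^+)\bigl((\widetilde{c}^2)^-/(\widetilde{c}^2)^+\bigr)$. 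So $q$ is strictly increasing. I do not anticipate a genuinely hard step here --- the only mild subtlety is to make sure the definition of $\widetilde{a},\widetilde{c}$ at the endpoints of the $\tau_j$ (in particular right-/left-continuity) is consistent with the signs of the one-sided limits used in the jump computation, which is precisely why the factor $\gamma_j$ (carrying ratios involving $a,c^2$ themselves, not $\widetilde{a},\widetilde{c}$) was included in \eqref{eq:Apos}.
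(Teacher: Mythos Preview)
Your proposal is correct and follows essentially the same approach as the paper: the product-rule computation for $(q/a)'$ and $(q/c^2)'$ using Proposition \ref{prop:elem}, and the jump computation via the recursion \eqref{eq:Apos} together with the defining inequalities for $\alpha_j,\sigma_j,\gamma_j$, match the paper's argument line for line. Your treatment is actually more complete, since you also verify $q(-L)=0$ and the monotonicity of $q$ explicitly (both within intervals and across breakpoints), whereas the paper's proof only spells out \eqref{pwsder} and \eqref{negjumps}.
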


\begin{proof}
First note that for $x\in\tau_{j}$, and using Proposition
\ref{prop:elem}, we have
\[
\left(  \frac{q}{a}\right)  ^{\prime}(x)\ =\ \left(  \left(  \frac
{\widetilde{a}}{a}\right)  ^{\prime}(x)\widetilde{c}^{2}(x)\ +\ \left(
\frac{\widetilde{a}}{a}\right)  (x)(\widetilde{c}^{2})^{\prime}(x)\right)
\left(  \int_{z_{j-1}}^{x}\frac{1}{\widetilde{a}(s)\widetilde{c}^{2}%
(s)}\mathrm{d}s+A_{j}\right)  +\frac{1}{a(x)}\geq\frac{1}{a(x)}.
\]
Similarly $(q/c^{2})^{\prime}(x)\geq c^{-2}\left(  x\right)  $. Moreover,
\begin{align}
\left[  \frac{q}{a}\right]  _{z_{j}}  &  =\frac{{\widetilde{a}^{-}%
(z_{j})(\widetilde{c}^{2})^{-}(z_{j})}}{a^{-}(z_{j})}\left(  \int_{\tau_{j}%
}\frac{1}{\widetilde{a}(s)\widetilde{c}^{2}(s)}\mathrm{d}s+A_{j}\right)
-\frac{\widetilde{a}^{+}(z_{j})(\widetilde{c}^{2})^{+}(z_{j})}{a^{+}(z_{j}%
)}A_{j+1}\nonumber\\
&  =\frac{\widetilde{a}^{+}(z_{j})(\widetilde{c}^{2})^{+}(z_{j})}{a^{+}%
(z_{j})}\,\times\\
&  \left[  \left(  \frac{\widetilde{a}^{-}(z_{j})}{\widetilde{a}^{+}(z_{j}%
)}\right)  \left(  \frac{(\widetilde{c}^{2})^{-}(z_{j})}{(\widetilde{c}%
^{2})^{+}(z_{j})}\right)  \left(  \frac{a^{+}(z_{j})}{a^{-}(z_{j})}\right)
\left(  \int_{\tau_{j}}\frac{1}{\widetilde{a}(s)\widetilde{c}^{2}%
(s)}\mathrm{d}s+A_{j}\right)  -A_{j+1}\right]
\end{align}
and the definition \eqref{eq:Apos} ensures this is non-positive. Similarly
$\left[  \frac{q}{c^{2}}\right]  _{z_{j}}\leq0$.

\end{proof}

We now have the main result of this section.

\begin{theorem}
\label{MainStabW11} {\imag Suppose Assumption \ref{ass:coeffs} holds}  and let $u$ solve \eqref{eq:1DB}. 
Then $u$ satisfies the a priori bound
\eqref{MainStab1D}, \eqref{DecCstabI-II},  with
\begin{align}
\label{defQ1}Q \  &  \leq\ {2L} \frac{\amax \cmax^2}{a_{\min}
c_{\min}^{2}} \exp\left(  \frac{2}{a_{\min}}\mathrm{Var}(a) + \frac{2}%
{c_{\min}^{2}} \mathrm{Var}(c^{2})\right)  \ .
\end{align}
In the pure impedance  case (see \eqref{BCprobs}),  the multiplicative factor $2L$ on the right-hand side can be replaced by $L$.  
\end{theorem}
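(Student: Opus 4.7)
\medskip

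\noindent\textbf{Proof proposal.}

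By Theorem \ref{th:stability1D} and Lemma \ref{lem:q}, the bound \eqref{MainStab1D} with the constants \eqref{DecCstabI-II} already holds; the only task is to estimate $Q=\|q\|_{L^\infty([-L,L])}$ for the specific $q$ given in Definition \ref{defq}. The plan is to (a) bound $q(x)$ on each subinterval in terms of the sequence $\{A_j\}$, (b) unroll the recursion \eqref{eq:Apos} to bound $A_N$ by the product $\prod_{j=1}^{N-1}\Pi_j$ (with $\Pi_j:=\alpha_j\sigma_j\gamma_j$) times a total integral, and (c) show that this product is controlled by $\exp\!\bigl(2\mathrm{Var}(a)/a_{\min}+2\mathrm{Var}(c^2)/c^2_{\min}\bigr)$.

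For step (a), on each $\tau_j$, using $\widetilde a\le a_{\max}$ and $\widetilde c^{\,2}\le c_{\max}^2$ together with \eqref{expq},
\[
q(x)\ \le\ a_{\max}c_{\max}^2\bigl(I_j+A_j\bigr), \qquad I_j:=\int_{\tau_j}\frac{1}{\widetilde a\,\widetilde c^{\,2}},
\]
and each $I_j\le |\tau_j|/(a_{\min}c_{\min}^2)$. For step (b), the recursion $A_{j+1}=\Pi_j(I_j+A_j)$ with $A_1=0$ unrolls to $A_N=\sum_{k=1}^{N-1}\!\bigl(\prod_{j=k}^{N-1}\Pi_j\bigr)I_k$, and since each $\Pi_j\ge 1$ this gives
$A_N\le\bigl(\prod_{j=1}^{N-1}\Pi_j\bigr)\sum_{k=1}^{N-1}I_k$. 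For $j<N$ we have $I_j+A_j=A_{j+1}/\Pi_j\le A_N$, and for $j=N$ we bound $I_N+A_N$ directly by absorbing $I_N$ into the total length, so that in all cases
\[
q(x)\ \le\ \frac{2L\,a_{\max}c_{\max}^2}{a_{\min}c_{\min}^2}\prod_{j=1}^{N-1}\Pi_j, \qquad x\in[-L,L],
\]
using $\sum_{k=1}^N|\tau_k|=2L$.

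The core of the argument is step (c). Whenever $\alpha_j>1$, Notation \ref{not:bar} and $\widetilde a\ge a_{\min}$ give
$\alpha_j-1=\tfrac{\widetilde a^-(z_j)-\widetilde a^+(z_j)}{\widetilde a^+(z_j)}\le [\widetilde a]_{z_j}^+/a_{\min}$. Applying $\ln(1+x)\le x$, summing over $j$, and invoking Proposition \ref{prop:var} yields $\prod_j\alpha_j\le \exp(\mathrm{Var}(\widetilde a)/a_{\min})\le\exp(\mathrm{Var}(a)/a_{\min})$. Splitting $\gamma_j\le\gamma_j^a\gamma_j^c$ with $\gamma_j^a:=\max\{a^+(z_j)/a^-(z_j),1\}$ and $\gamma_j^c$ the analogous quantity for $c^2$, the same convexity trick bounds $\prod_j\gamma_j^a\le\exp(\mathrm{Var}(a)/a_{\min})$ (this time the relevant jumps are the upward ones of $a$, again controlled by $\mathrm{Var}(a)$). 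Treating $\sigma_j$ and $\gamma_j^c$ identically in terms of $c^2$ and multiplying everything together produces the claimed $\exp\bigl(2\mathrm{Var}(a)/a_{\min}+2\mathrm{Var}(c^2)/c_{\min}^2\bigr)$, which completes \eqref{defQ1}.

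The main obstacle is book-keeping the factor $2$ in the exponent: one must recognise that $\alpha_j$ sees downward jumps of $\widetilde a$ while $\gamma_j^a$ sees upward jumps of $a$, both bounded by $\mathrm{Var}(a)$, which is where Proposition \ref{prop:var} becomes essential. For the pure impedance case, the Dirichlet constraint $q(-L)=0$ is no longer imposed; choosing $q$ symmetrically (e.g.\ carrying out the same construction starting from the midpoint of $[-L,L]$ and splitting the integral $\sum I_k$ into two halves) replaces the total length $2L$ by $L$, giving the sharper constant.
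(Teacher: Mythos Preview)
Your main argument (steps (a)--(c)) is correct and essentially identical to the paper's proof: the paper also unrolls the recursion \eqref{eq:Apos} to obtain $q(L)\le 2L\,\frac{a_{\max}c_{\max}^2}{a_{\min}c_{\min}^2}\prod_{\ell}\alpha_\ell\sigma_\ell\gamma_\ell$, and then bounds each product factor via the inequality $\log(1+x)\le x$ (packaged in the paper as Lemma~\ref{lem:tech}) together with Proposition~\ref{prop:var}. Your inline reproof of that lemma is fine.

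The one place where you diverge is the pure impedance improvement. Your ``symmetric construction from the midpoint'' is only sketched, and making it rigorous is not entirely trivial: conditions \eqref{pwsder} force $q/a$ and $q/c^2$ to be increasing on \emph{every} subinterval, so on $[-L,0]$ one cannot simply mirror the forward construction --- a separate backward recursion, with its own verification of the jump signs \eqref{negjumps}, would be needed. The paper's route avoids all of this. Since adding a constant to $q$ preserves both \eqref{pwsder} and \eqref{negjumps}, and since the constructed $q$ is monotone with $q(-L)=0$, one simply replaces $q$ by $q-q(L)/2$ (no boundary vanishing is required in the pure impedance case); then $\|q-q(L)/2\|_{L^\infty}=q(L)/2$, turning $2L$ into $L$ immediately. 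The same shift idea (subtracting $q(L)$) also covers the Impedance--Dirichlet case $\Gamma_D=\{L\}$, which your proposal does not address.
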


\begin{proof}
We begin by considering the ``Dirichlet-Impedance'' case $\Gamma_D = \{ -L\}$.  
With $q$ as defined above (and since $q(-L) = 0$),   Theorem \ref{th:stability1D} and Lemma \ref{lem:q} then  imply  that
\eqref{MainStab1D} and \eqref{DecCstabI-II} hold,  with
$Q\ =\ q(L)$. 
By induction on
\eqref{eq:Apos}, and using the crucial fact that $\alpha_{\ell}\geq
1,\sigma_{\ell}\geq1,\gamma_{\ell}\geq1$, we obtain
\[
A_{j+1}\ \leq\left(  \prod_{\ell=1}^{j}\alpha_{\ell}\sigma_{\ell}\gamma_{\ell
}\right)  \left(  \int_{-L}^{z_{j}}\frac{1}{\widetilde{a}\,\widetilde{c}%
^{2}}\right)  ,\quad j=1,\ldots N-1,
\]
from which it follows that
\[
A_{N}\ \leq\left(  \prod_{\ell=1}^{N-1}\alpha_{\ell}\right)  \left(
\prod_{\ell=1}^{N-1}\sigma_{\ell}\right)  \left(  \prod_{\ell=1}^{N-1}%
\gamma_{\ell}\right)  \left(  \int_{-L}^{z_{N-1}}\frac{1}{\widetilde{a}%
\,\widetilde{c}^{2}}\right)  .
\]
Thus, from \eqref{expq},
\begin{align}
q(L)\  &  \leq\ {a_{\max}\,c_{\max}^{2}}\left(  \prod_{\ell=1}^{N-1}%
\alpha_{\ell}\right)  \left(  \prod_{\ell=1}^{N-1}\sigma_{\ell}\right)
\left(  \prod_{\ell=1}^{N-1}\gamma_{\ell}\right)  \left(  \int_{-L}%
^{L}\frac{1}{\widetilde{a}\,\widetilde{c}^{2}}\right) \nonumber\\
&  \leq\ 2L\,\frac{a_{\max}\,c_{\max}^{2}}{a_{\min}\,c_{\min}^{2}}\left(
\prod_{\ell=1}^{N-1}\alpha_{\ell}\right)  \left(  \prod_{\ell=1}^{N-1}%
\sigma_{\ell}\right)  \left(  \prod_{\ell=1}^{N-1}\gamma_{\ell}\right)  .
\label{finalA}%
\end{align}
To bound the products in \eqref{finalA}, we appeal to Lemma \ref{lem:tech}.
This, combined with Proposition \ref{prop:var} gives immediately
\begin{equation}
\prod_{\ell=1}^{N-1}\alpha_{\ell}\ \leq\ \exp\left(  \frac{1}{a_{\min}%
}\mathrm{Var}(a)\right)  ,\quad\text{and}\quad\prod_{\ell=1}%
^{N-1}\sigma_{\ell}\ \leq\ \exp\left(  \frac{1}{c_{\min}^{2}}\mathrm{Var}%
(c^{2})\right)  . \label{finalB}%
\end{equation}
Also
\begin{align}
\prod_{\ell=1}^{N-1}\gamma_{\ell}\ \  &  \leq\ \left(  \prod_{\ell=1}%
^{N-1}\max\left\{  \frac{a^{+}(z_{\ell})}{a^{-}(z_{\ell})},1\right\}  \right)
\left(  \prod_{\ell=1}^{N-1}\max\left\{  \frac{(c^{2})^{-}(z_{\ell})}%
{(c^{2})^{+}(z_{\ell})},1\right\}  \right) \nonumber\\
&  \ \leq\ \exp\left(  \frac{1}{a_{\min}}\mathrm{Var}(a)\right)
\exp\left(  \frac{1}{c_{\min}^{2}}\mathrm{Var}(c^{2})\right)
\label{finalC}%
\end{align}
\and the result follows on combining \eqref{finalA}, \eqref{finalB} and
\eqref{finalC}. 

For the Impedance-Dirichlet case we replace $q$ by $q - q(L)$. This function has the same derivative as $q$,   
vanishes at $x = L$ and has maximum 
modulus   $q(L)$ occuring at $x = -L$, so the proof is the same as before. For the Impedance-Impedance case it is natural to replace $q $ by $q - q(L)/2$, which has maximum modulus $q(L)/2$ giving an extra factor of $1/2$ in the estimate.     
\end{proof}


\begin{discussion} {\imagg We finish this subsection with  a short   
 illustration of  how Theorem \ref{MainStabW11} handles both oscillations and jumps in the coefficients $a,c$.  These examples also show that the   
use of   Lemma \ref{lem:tech} to bound the right-hand side of  \eqref{finalA}  
is sharp in terms of the order of its dependence on the variance when 
$a$ or $c$ is  oscillatory, but can be pessimistic in terms of its dependence on $\amax/\amin$ and $\cmax^2/\cmin^2$,  when $a,c$ are not oscillatory. 

\noindent {\em Example 1.}   \ Consider the case when $a = 1$ and $c$ is piecewise constant with respect to the partition \eqref{partition}. 
Suppose $N$ is odd and   set 
\begin{align*}c(x) =  \cmax, & \quad x \in [z_{j-1}, z_j), \quad  j \ \text{odd}, \ j<N \\
   c(x) =  \cmax, & \quad x \in [z_{N-1}, z_N], \quad   j= N  \\
c(x) =  \cmin, & \quad x \in [z_{j-1}, z_j), \quad  j \ \text{even}, \ \end{align*} 
where  $\cmax > \cmin > 0$. Then  it is easy to see that (with the definitions as in Notation \ref{not:bar}), for each $j = 1, \ldots, N-1$,  $\alpha_j = 1$  and 
\begin{align*} \sigma_j & = \left(\frac{\cmax}{\cmin}\right)^2 \quad \text{and} \quad \gamma_j = 1,   
\quad  \text{when} \ j \quad \text{is odd }\\   
 \sigma_j & = 1 \quad \text{and} \quad \gamma_j  = \left(\frac{\cmax}{\cmin}\right)^2 , \quad   \text{when} \  j \quad \text{is even}\ .
\end{align*}
Hence the estimate \eqref{finalA} yields 
$$Q \ \leq  2L \left(\frac{\cmax^2}{\cmin^2}\right)^N.   \ $$
In this case $\Var(c^2) = (N-1)(\cmax^2 - \cmin^2)$. Thus,  
$\Var(c^2)$   grows linearly in $N$ which implies that the bound on $Q$  grows exponentially in $\Var(c^2)$. 
Similar results are implied by the estimates in \cite{FreletDiss} and \cite{BaChGo:17}.   
} 

\noindent {\em Example 2.}\  Consider the case when $c= 1$ and $a$ is oscillatory, given by 
$$ a(x) = 2 + \sin(m \pi x/L), \quad x \in [-L,L] , $$
where $m$ is chosen to be an even   positive integer. 
Then $a'$ changes sign at the points $$z_j := (j-m -1/2)L/m, \quad j = 1,\ldots, 2m.$$ 
These form the interior points of the partition \eqref{partition}, so that $N = 2m +1$, 
{\imagg and we set  $z_0 := -L,$ and $z_{2m + 1} := L$.  
Recall the definition of $\alpha_j, \sigma_j$ and $\gamma_j$ from Notation \ref{not:bar}.  It is easily seen that   $\sigma_j = \gamma_j = 1$, for $j = 1,\ldots 2m$.}    
Moreover $a(z_0) = a(z_{2m+1}) = 2$ and 
$$ a(z_j) = \left\{ \begin{array}{rl} 1,  & j \ \text{is even} \\
3,  & j\  \text{is odd} \end{array} \right. 
$$
Since $a$ switches from decreasing to increasing at $z_j$ with $j$ even, we have 
$$\alpha_j = \frac{a(z_{j-1})}{a(z_j)} \ =  \ 3 ,   \quad \text{when} \ j \ 
\text{is even } \quad \text{and} \quad    \alpha_j = 1 \quad \text{when } \ j \ \text{is odd}. $$
Hence 
the estimate \eqref{finalA} yields 
\begin{align}
Q \  
&  \leq\  2L\,\frac{a_{\max}}{a_{\min}}
\left(
\prod_{\stackrel{\ell={1}}{\ell \text{even}} } ^{2m} \alpha_\ell \right) =  
6L \, 3^{m}       .
\nonumber%
\end{align}
noting that  $\Var(a)$ grows linearly with $m$, we see that again  the above estimate grows exponentially with the order of the variance.  

{\imagg \noindent {\em Example 3.} Consider the non-oscillatory case when both 
$a$ and $c$ are  monotonic   (decreasing or increasing) functions on $[-L, L]$. 
 Then there are no interior points in the partition \eqref{partition}, and $z_0 = -L$, $z_1 = L$. Then using \eqref{finalA} to estimate $Q$ directly we would obtain $$ Q \ \leq \ 2L \frac{\amax}{\amin} \frac{\cmax^2}{\cmin^2},  $$
whereas  the estimate \eqref{defQ1} (which made use of Lemma \ref{lem:tech}) would be somewhat worse:  
$$ Q \ \leq \ 2L \frac{\amax}{\amin} \frac{\cmax^2}{\cmin^2} \exp\left(2 \frac{\amax}{\amin} + 2\frac{\cmax^2}{\cmin^2}  - 4\right).  $$ }
  

\end{discussion}
\subsection{On the sharpness of the estimates}

\label{Exbad}

In this section we will show by means of a family of examples that the bound
in Theorem \ref{MainStabW11} is sharp in the sense that the exponential growth
of the stability constant in $\mathrm{Var}(c)$ can be realised in numerical
simulations. The derivation of these 
``{\em nearly unstable}''  examples can be found in \cite{Torres17}
and further classes of examples which may also serve as benchmark problems are
derived in \cite{Torres18}.
Each member of our family has the general (strong) form:
\begin{equation}
\left.
\begin{array}
[c]{lll}
& -u^{\prime\prime}-\left(  \frac{\omega}{c}\right)  ^{2}u & =0\quad\text{in
}\Omega=\left(  -1,1\right)  ,\\
\text{with} & \left(  -u^{\prime}-\operatorname*{i}\frac{\omega}{c}u\right)
(-1)\  & =\ g_{1}\\
\text{and} & \left(  u^{\prime}-\operatorname*{i}\frac{\omega}{c}u\right)
(1)\  & =\ g_{2}.
\end{array}
\right\}  \label{Helmpwconst}%
\end{equation}
The family will be specified by a countably infinite sequence of frequencies
$\omega_{m}$, where $m$ ranges over the positive even integers, and a
corresponding sequence of piecewise constant wave speeds $c_{m}$ with
increasing numbers of jumps.
For this problem it is possible to write the analytic solution in each
subinterval on which $c_{m}$ is constant as a linear combination of left- and
right-travelling waves, with coefficients determined by the boundary data
$g_{1}$, $g_{2}$, the frequency $\omega_{m}$ and the wave-speed $c_{m}$. The
analytic solution is then equivalent to solving a system of linear equations,
the properties of which can be studied by symbolic manipulation. Using this
approach,  and looking for situations in which this system becomes{
ill-conditioned}, the following unstable case has been derived.




Let $r\in(0,1)$ and let $m$, be an even positive integer. For each $m$ we
specify the frequency
\begin{equation}
\omega_{m}=\frac{\pi}{2}(1-r+m),\quad m=2,4,6,\ldots.\label{eq:deffreq}%
\end{equation}
To specify the wave speed $c_{m}$ we choose a partition of $[-1,1]$ with
$2m+1$ subintervals of the form:
\begin{equation}
-1=x_{0}^{m}<x_{1}^{m}<\ldots x_{2m+1}^{m}=1.\label{defxi}%
\end{equation}
For each $\ell=1,\ldots,2m+1$ we define the wave speed on $\tau_{\ell}%
^{m}=(x_{\ell-1}^{m},x_{\ell}^{m})$ to be
\begin{equation}
c_{m,\ell}=\left\{
\begin{array}
[c]{ll}%
1-r & \text{when}\quad\ell\quad\text{ is odd},\\
1+r & \text{when}\quad\ell\quad\text{ is even}\ .
\end{array}
\right.  \label{eq:defc}%
\end{equation}
The partition $\left(  x_{\ell}^{m}\right)  _{\ell=0}^{2m+1}$ is fixed by
setting
\[
(x_{\ell}^{m}-x_{\ell-1}^{m})=\left\{
\begin{array}
[c]{ll}%
\frac{c_{m,\ell}}{1-r+m} & \text{when}\quad\ell\not =m+1\ ,\\
& \\
\frac{2c_{m,\ell}}{1-r+m} & \text{when}\quad\ell=m+1\ .
\end{array}
\right.
\]
Combining this prescription with $x_{0}:=-1$, it is easy to see that the
resulting partition is of the form \eqref{defxi}.

Since the right-hand side in the first equation in \eqref{Helmpwconst}
vanishes, {\imagg by writing down  \eqref{eq:1DB} with $v = u$ and taking the real part,  we have} 
\[
\int_{\Omega}{\igg |u ' |^{2}}\ =\ \int_{\Omega}\left(  \frac{\omega}{c}\right)
^{2}|u|^{2},
\]
which implies $\Vert u^{\prime}\Vert\ =\ \Vert(\omega/c)u\Vert$ and $\Vert
u\Vert_{\mathcal{H},a,c}=\sqrt{2}\Vert u^{\prime}\Vert$ . In the tables below
we present computed values of $\Vert u^{\prime}\Vert$ corresponding to varying
choices of $r$ and $m$. The computations are done by applying the standard
continuous linear finite element method to \eqref{Helmpwconst}. For given $m$,
we construct an initial piecewise uniform grid with $800$ equal elements on
each subinterval $\tau_{\ell}^{m}$, $\ell=1,\ldots,2m+1$. On this grid we
compute the finite element solution $u_{h}$ and then $\Vert u_{h}^{\prime
}\Vert$. All integrations in the implementation are done exactly. Then we
repeat this calculation using a sequence of six additional uniform
refinements, the finest one having $(800\times2^{6})\times(2m+1)=51200\times
(2m+1)$ elements in each $\tau_{\ell}^{m}$.
Provided the computed value of $\Vert u_{h}^{\prime}\Vert$ does not change (in
its first four significant figures) in the final three of these seven
successive refinements, then that value is recorded as the true value of
$\Vert u^{\prime}\Vert$ (to four figures).

Computations are done in \texttt{matlab} and the required linear systems are
solved using the standard sparse backslash. Overall, the linear systems being
solved are quite ill-conditioned and the computation of $u_{h}$ can become
unstable on the finest meshes for the largest values of $m$, especially when
$r$ is relatively close to $1$. In some of our experiments we failed to achieve  
convergence to  four significant figures. Such results are labelled with a * in
the tables.

Table \ref{tab:1} illustrates the properties of $\Vert u^{\prime}\Vert$ as $m$
and $r$ vary. In the column labelled $\kappa$ we give the estimated condition
number of the system matrix on the finest grid (computed using the
\texttt{matlab} function \texttt{condest}.



\begin{table}[h]
\begin{center}%
\begin{tabular}
[c]{|l||l|l||l|l||l|l|}\hline
& \multicolumn{2}{|c||}{$r = 0.4$} & \multicolumn{2}{|c||}{$r = 0.5$} &
\multicolumn{2}{|c|}{$r = 0.6$}\\\hline
$m$ & $\Vert u^{\prime}\Vert$ & $\kappa$ & $\Vert u^{\prime}\Vert$ & $\kappa$
& $\Vert u^{\prime}\Vert$ & $\kappa$\\\hline
2 & 7.742(-1) & 5.46(+10) & 8.498(-1) & 8.21(+10) & 9.642(-1) & 1.34(+11)\\
4 & 1.313 & 3.46(+11) & 1.845 & 8.22(+11) & 2.789 & 2.31(+12)\\
6 & 2.538 & 1.98(+12) & 4.588 & 7.63(+12) & 9.238 & 3.75(+13)\\
8 & 5.180 & 1.10(+13) & 1.203(+1) & 6.94(+13) & 3.225(+1) & 6.03(+14)\\
10 & 1.088(+1) & 6.06(+13) & 3.247(+1) & 6.26(+14) & 1.16(+2) * & 9.63(+15)\\
12 & 2.329(+1) & 3.31(+14) & 8.9(+1)* & 5.64(+15) & 4.2(+2)* &
1.44(+17)\\\hline
\texttt{grad} & 0.34 &  & 0.46 &  & 0.61 & \\\hline
\end{tabular}
\end{center}
\caption{Values of $\Vert u^{\prime}\Vert$ for problem \eqref{Helmpwconst},
with $\omega^{m}$ given in \eqref{eq:deffreq} and $c^{m}$ given in
\eqref{eq:defc}. The data in \eqref{Helmpwconst}  is chosen as $g_{1} = 0$, $g_{2} = 1$ and $\kappa$
denotes the condition number of the system matrix in each case. Values
labelled with $^{*}$ have converged to less than four significant figures.
\texttt{grad} indicates the gradient of the linear least squares fit to the
data {\igg $(m, \log(\Vert(u^{m})^{\prime}\Vert))$}. }%
\label{tab:1}%
\end{table}

From these computations we see clearly the blow up of the Helmholtz energy $2
\Vert u^{\prime}\Vert$ as $m$ increases, with the rate of blow-up increasing
as $r$ increases. The results can be seen to reflect the theoretical worst
case bound
as follows.

Since $f=0$ and $\Vert g\Vert_{N}=1$, Theorem \ref{MainStabW11} gives the
bound
\[
\Vert u^{\prime}\Vert\ \leq\frac{1}{\sqrt{2}}C_{\mathrm{stab}}%
^{\operatorname{II}}\sqrt{Q_{\ast}}%
\]
with
\[
C_{\mathrm{stab}}^{\operatorname{II}}\ =\ 2\sqrt{\frac{3(1+r)}{2(1-r)}%
+1},\quad\text{and}\quad {\imag Q_{\ast}=2\left(\frac{1+ r} {1-r} \right)^2 
\exp\left(  4m \frac{(1+r)^2}{(1-r)^4}\right) } \ .
\]
Thus, with $u^{m}$ denoting the solution of problem \eqref{Helmpwconst} for
each $m$, we have
{\imag \begin{align}\label{eq:compest}
\log\Vert u^{\prime}\Vert\ \leq\ 2m \frac{(1+r)^2}{(1-r)^{4}} +
\log\left(
C_{\mathrm{stab}}^{\operatorname{II}} \frac{1+r } {1-r}\right) \ .
\end{align}}
Extrapolation on the data in Table \ref{tab:1} indicates that $\log
(\Vert(u^{m})^{\prime}\Vert)$ grows approximately linearly with $m$. The
gradient of the linear least squares fit to the data $(m,\log(\Vert
(u^{m})^{\prime}\Vert))$ is indicated in the last
line of Table \ref{tab:1} 
and is seen to grow weakly as $r$ increases, numerically supporting the estimate \eqref{eq:compest}.

The instability illustrated above is sensitive to the boundary data
$(g_{1},g_{2})$. In Table \ref{tab:2} we illustrate two different cases, one
stable and one not.
\begin{table}[h]
\begin{center}%
\begin{tabular}
[c]{|l||l|l|}\hline
$m$ & $g_{1}= 1 = g_{2}$ & $g_{1}= 2, \ g_{2} = 0.5$\\\hline
$2$ & 4.677(-1) & 1.520\\
$4$ & 3.480(-1) & 4.198\\
$6$ & 2.887(-1) & 1.386(+1)\\
$8$ & 2.520(-1) & 4.838(+1)\\
$10$ & 2.26(-1)* & 1.70(+2)*\\
$12$ & 2.1(-1)* & 6.30(+2)*\\\hline
\end{tabular}
\end{center}
\caption{Values of $\Vert u^{\prime}\Vert$ for problem \eqref{Helmpwconst},
with $\omega^{m}$ given in \eqref{eq:deffreq} and $c^{m}$ given in
\eqref{eq:defc}, with $r = 0.6$. }%
\label{tab:2}%
\end{table}
In Table \ref{tab:3}   we study the sensitivity of the instability to small changes
in the data. We consider the same data as for the problem in Table
\ref{tab:1}, except that we perturb the mesh point $x_{k+1}$ by a small
parameter $\varepsilon$, i.e. the mesh is as in \eqref{defxi} except that
\begin{align}
\label{eq:pert}x_{m+1} = x_{m+1} + \varepsilon.
\end{align}
Computations for various $m$ and $\varepsilon$ are given in Table \ref{tab:3}.
Computations with $\varepsilon\leq10^{-7}$ and $m \geq14$ are not sufficiently
convergent and hence are not reported.   {\igg Table \ref{tab:3} indicates that the unstable case constructed in 
\eqref{eq:deffreq}, \eqref{defxi}, \eqref{eq:defc} is very delicate. 
By adding a perturbation of about $10^{-6}$  to the location of the   central jump point  $x_{m+1}$, we turn an unstable problem 
to a stable one.}

\begin{table}[h]
\begin{center}%
\begin{tabular}
[c]{|l||l|l|l|l|l|l|l|l|}\hline
$m\backslash\varepsilon$ & $0$ & $10^{-9} $ & $10^{-8} $ & $10^{-7} $ &
$10^{-6} $ & $10^{-5} $ & $10^{-4} $ & $10^{-3} $\\\hline
$6$ & 4.59 & 4.59 & 4.59 & 4.59 & 4.59 & 4.578 & 3.829 & 0.7256\\
$8$ & 12.03 & 12.03 & 12.03 & 12.03 & 11.99 & 9.49 & 1.547 & 0.2603\\
$10$ & 32.47 & 32.47 & 32.47 & 32.3* & 24.5* & 3.73* & 0.4200 & 0.1927\\
$12$ & 89.35 & 89.28 & 88.9* & 65* & 9.57* & 0.978* & 0.1982 & 0.1735\\
$14$ &  &  &  &  & 2.57* & 0.3030 & 0.1629 & 0.1608\\
$16$ &  &  &  &  & 0.72* & 0.1644 & 0.1509 & 0.1508\\
$18$ &  &  &  &  & 0.244* & 0.1437 & 0.1424 & 0.1424\\
$20$ &  &  &  &  & 0.1466 & 0.1354 & 0.1353* & 0.1353*\\\hline
\end{tabular}
\end{center}
\caption{Values of $\Vert u^{\prime}\Vert$ for problem \eqref{Helmpwconst},
with $\omega^{m}$ given in \eqref{eq:deffreq} and $c^{m}$ given in
\eqref{eq:defc} with $r = 0.5$. Mesh is as in \eqref{defxi} with perturbation
\eqref{eq:pert} . }%
\label{tab:3}%
\end{table}

\begin{figure}[ht]
\begin{tabular}{rl}
\includegraphics[scale=0.5]{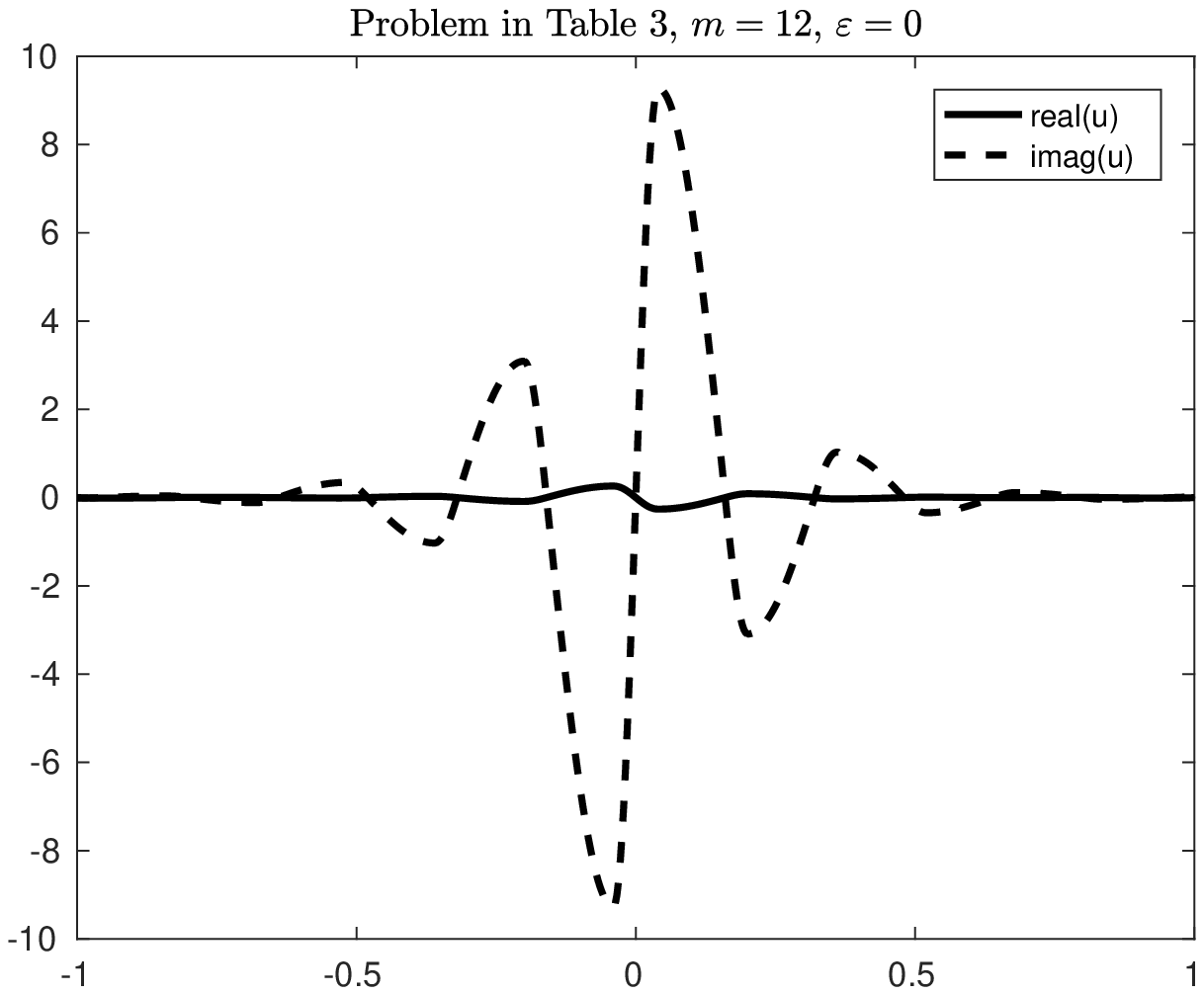} & \includegraphics[scale=0.5]{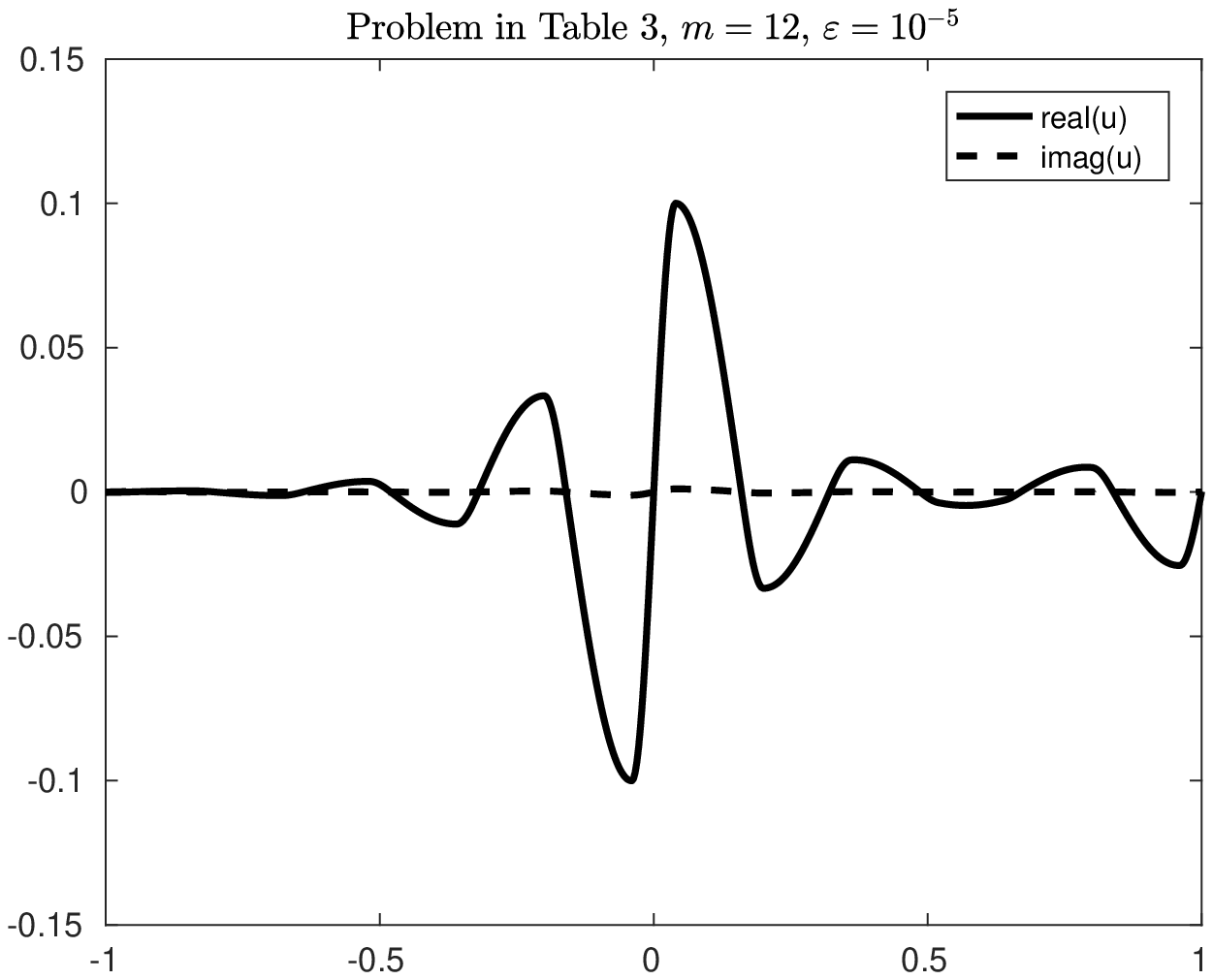}
\end{tabular} 
\caption{\label{fig:2} Graphs of the real and imaginary parts of the solution $u$ to the problem computed in Table \ref{tab:3} with $m = 12$ and $\varepsilon = 0$ (left) and $\varepsilon = 10^{-5}$ (right).  }
\end{figure} 

{\imag In order to illustrate the substantial effect a small perturbation can have on
the solution near an instability we give in Figure \ref{fig:2}  
graphs of the solution to the problem studied in Table
\ref{tab:3} for the cases $\varepsilon=0$ and $\varepsilon=10^{-5}$. Note the
substantial difference in the vertical scales in these two graphs, while the
data is only different $10^{-5}$.
}

\section{Appendix}

\begin{lemma}
\label{lem:tech} Suppose $f \in C_{\mathrm{pw}}^{1}[-L,L]$ with break points
as in \eqref{partition}, so that, on each $\tau_{j}$, either $f^{\prime}(x)
\leq0$ or $f^{\prime}(x) > 0$ . Suppose also $f(x) \geq f_{\min} > 0$ for all
$x \in[-L,L]$. Then
\begin{align}
\label{firstprod}\prod_{\ell= 1}^{N-1} \max\left\{  \frac{f^{\pm}(z_{\ell}%
)}{f^{\mp}(z_{\ell})}, 1 \right\}  \ \leq\ \exp\left(  \frac{1}{f_{\min}}
\mathrm{Var} (f)\right)  \ .
\end{align}

\end{lemma}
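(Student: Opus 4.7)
My plan is to take logarithms so that the product turns into a sum, and then convert each logarithmic jump into an estimate on the corresponding jump of $f$ via the mean value theorem. The one-signed hypothesis on $f'$ plays no role here; all the work happens at the break points $z_\ell$.

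First, since $\log \max\{x,1\} = (\log x)_+$ for $x>0$, taking $\log$ of \eqref{firstprod} reduces the claim to showing
\begin{equation*}
\sum_{\ell=1}^{N-1}\bigl(\log f^{\pm}(z_\ell) - \log f^{\mp}(z_\ell)\bigr)_+ \;\le\; \frac{1}{f_{\min}}\,\Var(f).
\end{equation*}
Using the trivial bound $(y)_+ \le |y|$, it suffices to prove the symmetric inequality
\begin{equation*}
\sum_{\ell=1}^{N-1} \bigl|\log f^{+}(z_\ell) - \log f^{-}(z_\ell)\bigr| \;\le\; \frac{1}{f_{\min}}\,\Var(f),
\end{equation*}
which will cover both choices of signs simultaneously.

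For each interior break point $z_\ell$, I would apply the mean value theorem to $\log$ on the interval between $f^-(z_\ell)$ and $f^+(z_\ell)$, obtaining
\begin{equation*}
\bigl|\log f^{+}(z_\ell) - \log f^{-}(z_\ell)\bigr| \;=\; \frac{1}{\xi_\ell}\,\bigl|f^{+}(z_\ell) - f^{-}(z_\ell)\bigr| \;=\; \frac{|[f]_{z_\ell}|}{\xi_\ell},
\end{equation*}
for some $\xi_\ell$ lying between $f^-(z_\ell)$ and $f^+(z_\ell)$. The positivity hypothesis $f \ge f_{\min}$ then forces $\xi_\ell \ge f_{\min}$.

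Summing these estimates over $\ell$ and recalling
$\Var(f) = \sum_{\ell=1}^{N-1} |[f]_{z_\ell}| + \int_{-L}^{L}|\partial_{\mathrm{pw}} f|$,
I obtain
\begin{equation*}
\sum_{\ell=1}^{N-1}\bigl|\log f^{+}(z_\ell)-\log f^{-}(z_\ell)\bigr| \;\le\; \frac{1}{f_{\min}}\sum_{\ell=1}^{N-1}|[f]_{z_\ell}| \;\le\; \frac{\Var(f)}{f_{\min}}.
\end{equation*}
Exponentiating then yields \eqref{firstprod}. There is no real obstacle; the only place to be careful is the bookkeeping that $\log \max\{x,1\}=(\log x)_+$ so that passing to absolute values does not damage the constant.
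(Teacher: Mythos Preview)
Your proof is correct and follows essentially the same route as the paper's: take logarithms, restrict to the terms where the ratio exceeds $1$, and bound each $\log(f^{\pm}(z_\ell)/f^{\mp}(z_\ell))$ by $|[f]_{z_\ell}|/f_{\min}$ before summing. The only cosmetic differences are that the paper uses the inequality $\log(1+x)<x$ in place of your mean value theorem step, and treats one sign case explicitly rather than passing to absolute values; your observation that the one-signed hypothesis on $f'$ is not used is also correct.
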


\begin{proof}
We restrict the proof to the case of   $f^{+}$ in the numerator and
$f^{-}$ in the denominator on the left-hand side of \eqref{firstprod}. The other case is 
analogous. 
Let the left hand side of this inequality be denoted $C$. Then
\begin{align*}
\log(C)  &  \ = \sum_{\overset{\ell= 1}{f^{+}(z_{\ell}) > f^{-}(z_{\ell})}%
}^{N-1} \log\left(  \frac{f^{+}(z_{\ell})}{f^{-}(z_{\ell})} \right) \\
&  = \sum_{\overset{\ell= 1}{f^{+}(z_{\ell}) > f^{-}(z_{\ell})}}^{N-1}
\log\left(  1 + \frac{f^{+}(z_{\ell}) - f^{-}(z_{\ell})}{f^{-}(z_{\ell})}
\right) \\
&  \ < \frac{1}{{f_{\min}}} \sum_{\overset{\ell= 1}{f^{+}(z_{\ell}) >
f^{-}(z_{\ell})}}^{N-1} {(f^{+}(z_{\ell}) - f^{-}(z_{\ell}))} \ \leq\frac
{1}{{f_{\min}}} \sum_{\ell= 1}^{N-1} \vert[f]_{z_{\ell}} \vert \leq 
\ \frac{1}{f_{\min}} \mathrm{Var}_{[z_{0},L]} (f) \ .
\end{align*}
\end{proof}

{\noindent\textbf{Acknowledgement} \ We are grateful to the Hausdorff Research
Institute for Mathematics in Bonn for Visiting Fellowships in their 2017
Trimester Programme on Multiscale Methods, during which part of this work was
carried out. The first author also thanks the Institut f\"{u}r Mathematik at
the University of Z\"{u}rich for financial support. We would like to thank
Euan Spence for very useful discussions. } 


\end{document}